\def\mymathhyphen{{\hbox{-}}}
\theoremstyle{plain}
\newtheorem{theorem}{Theorem}[section]
\newtheorem{lemma}[theorem]{Lemma}
\newtheorem{prop}[theorem]{Proposition}
\theoremstyle{definition}
\newtheorem{definition}[theorem]{Definition}
\newtheorem{remark}[theorem]{Remark}
\newtheorem{example}[theorem]{Example}
\newtheorem{cor}[theorem]{Corollary}
\theoremstyle{remark}
\begin{document}
	\title[On some subspaces of vector-valued continuous function space]{On some subspaces of vector-valued continuous function space, from the perspective of Best coapproximation}
	\author[ Souvik Ghosh, Kallol Paul, Debmalya Sain and Shamim Sohel]{ Souvik Ghosh, Kallol Paul, Debmalya Sain and Shamim Sohel}

	\newcommand{\acr}{\newline\indent}

	\address[Ghosh]{Department of Mathematics, Jadavpur University, Kolkata 700032, West Bengal, INDIA}
	\email{sghosh0019@gmail.com; souvikg.math.rs@jadavpuruniversity.in}
	
	\address[Paul]{Vice-Chancellor, Kalyani University, West Bengal and Professor of Mathematics (on lien)\\ Jadavpur University\\ Kolkata 700032\\ West Bengal\\ INDIA}
	\email{kalloldada@gmail.com}
	
	\address[Sain]{Department of Mathematics\\ Indian Institute of Information Technology\\ Raichur 584135\\ Karnataka \\INDIA\\ }
	\email{saindebmalya@gmail.com}
	
		\address[Sohel]{Department of Mathematics, Jadavpur University, Kolkata 700032, West Bengal, INDIA}
	\email{shamimsohel11@gmail.com}

	

	\thanks{ Souvik Ghosh and Shamim Sohel  would like to thank CSIR, Govt. of India, for the financial support in the form of	Senior Research Fellowship under the mentorship of Prof. Kallol Paul.} 
	
	\subjclass[2020]{Primary 46B20, Secondary 47L05}
	\keywords{Best coapproximations; Birkhoff-James orthogonality; Bounded linear operators; Continuous functions; }

	\begin{abstract}

		This article explores anti-coproximinal and strongly anti-coproximinal subspaces in the  spaces of vector-valued continuous functions and operator spaces. We provide a complete characterization of strongly anti-coproximinal subspaces in $ C_0(K, \mathbb{X}) $, under the assumption that the unit ball of  $ \mathbb{X}^* $ is the  closed convex hull of its weak*-strongly exposed points.  Additionally, the work includes a  stability analysis of  anti-coproximinal and strongly anti-coproximinal subspaces  of $ \mathbb{L}(\mathbb{X}, \mathbb{Y}) $ and the  space $ \mathbb{Y} $. Beyond these, we present a general characterization of (strong) anti-coproximinal subspaces in the broader context of Banach spaces.

	\end{abstract}
	
	\maketitle
	\section{Introduction.} 
	
	The notion of best coapproximation was introduced by Francetti and Furi in \cite{FF}. Since then, it has been studied in connection with various aspects of Banach space geometry, including the classical problem of one-complemented subspaces (see, for example, \cite{ Bruck, LT, PS, Rao, SSGP, SSGP2}). In our recent work \cite{SSGP3}, we introduced and investigated two special types of subspaces from the perspective of best coapproximation, namely the anti-coproximinal and strongly anti-coproximinal subspaces. Subsequently, in \cite{SGSP}, we explored these subspaces within the space 
	$C(K)$, the Banach space of all scalar-valued continuous functions defined on a compact Hausdorff space $K$.
	In the present article, we extend this line of inquiry by characterizing these special subspaces in the space of vector-valued continuous functions defined on a compact Hausdorff space $K$. Before presenting the main results, we first establish the necessary notation and terminology that will be used throughout the paper.
	\\
	
	Letters $\mathbb{X}, \mathbb{Y}$ denote Banach spaces over the field $\mathbb{K},$ either real or complex and $\mathbb{X}^*$ is the dual of $\mathbb{X}$. 	For $\epsilon >0,$  let $\mathcal{D}(\epsilon)= \{z \in \mathbb{K}: |z| \leq \epsilon \}$  and let $\mathbb{T}$ denote the unit circle in the complex plane.  For a complex number $z,$ we denote $\Re[z]$  for the real part  of $z.$	 $B_\mathbb{X}$ and $S_{\mathbb{X}}$ stand for unit ball and unit sphere of $\mathbb{X},$ respectively.  Let $\mathbb{L}(\mathbb{X}, \mathbb{Y})$ $(\mathbb{K}(\mathbb{X}, \mathbb{Y}))$ be the space of all bounded (compact) linear operators between $\mathbb{X}$ and $\mathbb{Y}.$ The space of all finite-rank operators is denoted by $\mathcal{F}(\mathbb{X}, \mathbb{Y}).$   The annihilator  of  a subset $S$ of $\mathbb{X}$   denoted by $S^\perp,$ is defined as  $S^\perp := \{x^* \in \mathbb{X}^* : x^* (x)=0, \, \text{for all}\,\, x \in S\}.$ Similarly, if $M$ is a subset of $\mathbb{X}^*$ then $^\perp M$ is defined as: $^\perp M := \{x \in \mathbb{X} :  x^* (x)=0, \, \text{for all}\, x^* \in M\}.$ 
	For a nonzero $x \in \mathbb{X},$  $x^* \in S_{\mathbb{X}^*}$ is said to be a supporting functional at $x$ if $x^*(x)=\|x\|.$ The set of all supporting functionals at $x$ is denoted by $J(x),$ i.e., $J(x)=\{x^* \in S_{\mathbb{X}^*}: x^*(x)=\|x\|\}.$  The set valued map $J: \mathbb X \to \mathbb{X}^*,$ where $J(x)=\{x^*\in S_{X^*} : x^*(x)=\|x\|\}$ is said to be the duality map. A nonzero  element $x \in \mathbb{X} $ is said to be smooth if $J(x)$ is  singleton.
	The convex hull of a set $S$ is denoted as $co(S).$  The set of all extreme points of $C$ is denoted by $Ext(C).$ A finite-dimensional real Banach space is said to be polyhedral  if $Ext(B_{\mathbb{X}})$ is finite. A convex set $F \subset S_{\mathbb{X}}$ is said to be a face of $B_{\mathbb{X}} $ if for any $ y, z \in B_{\mathbb{X}},$ $\frac{1}{2}(y+z) \in F$ implies that $y, z \in F.$  $F$ is called a maximal face if for any face    $F'$ of $B_\mathbb{X},$ $F \subset F'$ implies $F=F'.$  The notation $int(F)$ denotes the relative interior of a face $F$ endowed with the subspace topology of $F.$  
	The space  $ \mathbb X$ is said to be strictly convex if $Ext(B_{\mathbb{X}})= S_{\mathbb{X}}.$ An element $x \in S_\mathbb{X}$ is said to be rotund if for some  $y \in B_\mathbb{X},$ $\|\frac{x+y}{2}\|=2$ implies $x=y.$ In a strictly convex space every element of the unit sphere is  rotund.  
	An element $x \in S_{\mathbb{X}}$ is said to be locally uniformly rotund, in short,  LUR \cite{BHL, M} if given any $ \{x_n\}_{n \in \mathbb{N}} \subset B_\mathbb{X},$ $$\lim_{n\to \infty}\frac{\|x_n+x\|}{2} = 1$$ implies that $\lim_{n\to \infty} x_n = x.$
	An element $x \in S_{\mathbb{X}}$ is said to be almost locally uniformly rotund or ALUR (w-ALUR) point if given any $\{x_m^*\}_{m \in \mathbb{N}} \subset B_{\mathbb{X}^*}$ and any $\{x_n\}_{n \in \mathbb{N}} \subset B_\mathbb{X} $, $$\lim_{m\to \infty}\lim_{n\to\infty} x_m^* \bigg(\frac{x_n+x}{2}\bigg)=1$$ implies that $x_n \longrightarrow x$ ($x_n \overset{w}{\longrightarrow} x$).	 An element $x \in S_{\mathbb{X}}$ is said to be an exposed point of $B_{\mathbb{X}}$ if there exists $x^* \in J(x)$ such that $x^*(y) < 1= x^*(x),$ for any $y \in S_{\mathbb{X}} \setminus \{x\}.$ Clearly, every exposed point of $B_{\mathbb{X}}$ is also an extreme point of $B_{\mathbb{X}}$. The set of all  exposed points is  denoted by $Exp(B_{\mathbb{X}}).$ 
	We say $x \in S_{\mathbb{X}}$ to be a strongly exposed point of $B_{\mathbb{X}}$ if there exists $x^* \in J(x)$ such that for any sequence $\{x_n\} \subset  B_{\mathbb{X}},$ $x^*(x_n) \longrightarrow
	1=x^*(x)$ implies that $x_n \longrightarrow x.$  In this case, we say that $x^*$ strongly exposes $x$  and $x^*$
	is said to be a strongly exposing functional of $B_{\mathbb{X}}$.
	We write $st \mymathhyphen Exp(B_{\mathbb{X}})$ and $SE(B_{\mathbb{X}})$ for the set of strongly exposed points of $B_{\mathbb{X}}$ and the set of strongly exposing
	functionals of $B_{\mathbb{X}}$,
	respectively. A nonzero element $x^* \in B_{\mathbb{X}^*}$ is said to be weak*-strongly exposed point if there exists an $x\in S_\mathbb{X}$ such that  $x_n^*(x) \longrightarrow x^*(x)$ implies that $x_n^* \xrightarrow{\enskip w^* \enskip} x^*.$	
	For a  compact (locally compact) Hausdorff topological space $K$,  $C(K, \mathbb{X})$ ($C_0(K, \mathbb{X})$) denotes the space of continuous functions (continuous functions vanishes at infinity) from $K$ to $\mathbb{X}$.  Whenever $K$ is compact, $C_0(K, \mathbb{X})= C(K, \mathbb{X}).$
	For a function $f \in C_0(K, \mathbb{X}),$ the norm attainment set of $f,$ denoted by $M_f,$ is defined as $	M_f= \{k\in K: \|f(k)\|= \|f\|\}.$  Note that for any  $f \in S_{C_0(K, \mathbb{X})},$ $ M_f$ is non-empty and compact. 
	
	For any element $ x \in \mathbb{X}, $ and any subspace $\mathbb Y$ of $ \mathbb{X}, $  $ y_0 \in \mathbb{Y} $ is said to be a best coapproximation \cite{FF} to $ x $ out of $ \mathbb Y$ if $ \| y_0 - y \| \leq \| x - y \| $ for all $ y \in \mathbb{Y}. $ 	Given any $x\in \mathbb{{X}}$ and a subspace $\mathbb{{Y}}$ of $\mathbb{X},$ $\mathcal{R}_\mathbb{{Y}}(x)$ denotes the (possibly empty)  set of all best coapproximations to $x$ out of $\mathbb{{Y}}.$ The set $dom~\mathcal{R}_{\mathbb{Y}}$ denotes the set of all points of $x \in \mathbb{X}$ from which the best coapproximation to $x$ out of $\mathbb{Y}$ exists. A subspace $\mathbb{Y}$ is said to be coproximinal if $dom~\mathcal{R}_{\mathbb{Y}} =\mathbb{X}$ and a coproximinal subspace $\mathbb{Y}$ is said to be co-Chebyshev if $\mathcal{R}_{\mathbb{Y}}(x)$ is singleton, for each $x \in \mathbb{X}.$ To study best coapproximation problem, we use Birkhoff-James orthogonality. Given $ x, y  \in \mathbb{X}$, we say that $ x $ is Birkhoff-James orthogonal \cite{B, J}  to $ y, $ written as $ x \perp_B y, $ if $ \| x+\lambda y \| \geq \| x \|, $ for all $ \lambda \in \mathbb{K}. $ From \cite{FF} we note that $ y_0 \in \mathbb{Y} $ is a best coapproximation to $ x $ out of $ \mathbb{Y} $ if and only if 
	$$ \mathbb{{Y}} \perp_B (x- y_0), \,\, i.e., \,\, y \perp_B (x-y_0) ~  \forall  y \in \mathbb{Y}. $$ 
	
	Given $\epsilon \in [0, 1)$ and $ x, y \in \mathbb{X},$  $x$ is said to be $\epsilon$-Birkhoff-James orthogonal \cite{C} to $y,$ written as $ x \perp_B^\epsilon y,$ if 
	\[\|x + \lambda y\| \geq \|x\| -  \epsilon \|\lambda y\|~ \text{for ~every ~} ~\lambda \in \mathbb{K}. \]
	The above definition, in conjunction with the previously mentioned relation between Birkhoff-James orthogonality and the best coapproximation, naturally leads us to the following definition of  $\epsilon$-best coapproximation, introduced in \cite{SSGP3}:\\
	Let $ \epsilon \in [0,1).$  For a subspace $\mathbb Y$ and a  given $ x \in \mathbb{X},  $ we say that $ y_0 \in \mathbb{Y} $ is an  $\epsilon$-best coapproximation to $ x $ out of $ \mathbb{{Y}} $ if 
	$$ \mathbb{{Y}} \perp_B^\epsilon (x- y_0), \,\, i.e., \,\, y \perp_B^\epsilon (x-y_0) \,  \forall  y \in \mathbb{Y}. $$ 
	As noted in \cite{SSGP3},  the definitions of best coapproximation and $\epsilon$-best coapproximation motivate us to study the following two special types of subspaces of a Banach space:
	
	\begin{definition}\label{epsilon} 
		(i)  A subspace  $\mathbb{{Y}}$ of  $\mathbb{X}$ is said to be an \textit{anti-coproximinal subspace} of $\mathbb{X}$ if for any $x \in \mathbb{X} \setminus \mathbb{Y},$ there does not exist a best coapproximation to $x$ out of $\mathbb{Y}.$ Equivalently, a subspace $\mathbb{Y}$ is anti-coproximinal in $\mathbb{X}$ if  for any nonzero $x \in \mathbb{X},$ $\mathbb{Y} \not\perp_B x.$\\
		(ii) A subspace  $\mathbb{{Y}}$  of $\mathbb{X}$ is said to be a \textit{strongly anti-coproximinal subspace} of $\mathbb{X}$ if for any given $x \in \mathbb{X} \setminus \mathbb{Y}$ and for any $\epsilon \in [0, 1),$ there does not exist an $\epsilon$-best coapproximation to $x$ out of $\mathbb{Y}.$  Equivalently, a subspace  $\mathbb{Y}$ is strongly anti-coproximinal if  for any nonzero $x \in \mathbb{X}$ and for any $\epsilon \in [0, 1),$ $\mathbb{Y} \not\perp_B^{\epsilon} x.$ 
		
	\end{definition}
	
	The primary objective to introduce these two notions is to investigate the least  favorable scenario	that can arise in studying the best coapproximation problem. In \cite{SSGP3, SGSP} the extreme nature of these two subspaces have been portrayed in different class of Banach spaces including finite-dimensional polyhedral Banach space, smooth Banach space, strictly convex Banach space and $C(K)$ space. In this article we further continue our exploration in the space of vector-valued continuous function space as well as in general Banach spaces. Our main results are divided into four  sections including the introductory one.  Firstly, we provide some necessary  and sufficient conditions separately for both  anti-coproximinal and strongly anti-coproximinal subspaces in $C_0(K, \mathbb{X}).$ However, we obtain a complete characterization  for strongly anti-coproximinal subspaces in $C_0(K,\mathbb{X}),$ considering the unit of $\mathbb{X}^*$ is the closed convex hull of its weak*-strongly exposed points. We next investigate the conditions under which a subspace containing $\mathcal{F}(\mathbb{X}, \mathbb{Z})$ is anti-coproximinal and strongly anti-coproximinal in $\mathbb{L}(\mathbb{X}, \mathbb{Y}),$ where $\mathbb{Z}$ is a subspace of $\mathbb{Y}.$  In the final section, we examine these two classes of subspaces within the general framework of Banach spaces. We provide a complete characterization of anti-coproximinal and strongly anti-coproximinal subspaces, as well as coproximinal and co-Chebyshev subspaces.
	
	\section{Anti-coproximinality in $C_0(K,\mathbb{X})$}

	Here we explore the (strongly) anti-coproximinal subspaces in $ C_0(K, \mathbb{X}) $. In \cite[Th.~3.35]{SGSP}, it is noted that $ C(K, \mathbb{Y}) $ is (strongly) anti-coproximinal in $ C(K, \mathbb{X}) $ if and only if $ \mathbb{Y} $ is (strongly) anti-coproximinal in $\mathbb{X}$, assuming that $K $ is compact and perfectly normal. Our aim  is to investigate conditions under which any arbitrary subspace $ \mathcal{Y} $ is (strongly) anti-coproximinal in $C_0(K,\mathbb{X}).$ For this we need the following two well known lemmas which describes orthogonality in terms of functionals.
	
	\begin{lemma}\label{James}\cite[Th. 2.1]{J}
		Let $\mathbb{X}$ be a Banach space and let $x, y \in \mathbb{X}.$  Then $x \perp_B y$ if and only if there exists $x^* \in J(x)$ such that $x^*(y)=0.$
	\end{lemma}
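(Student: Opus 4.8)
This is the classical James characterization, so the plan is to reproduce the standard two-directional argument; throughout I take $x \neq 0$, as is implicit in the appearance of $J(x)$.

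The sufficiency direction is routine. Assuming some $x^* \in J(x)$ satisfies $x^*(y) = 0$, I would simply estimate, for every $\lambda \in \mathbb{K}$,
$$\|x + \lambda y\| \geq |x^*(x + \lambda y)| = |x^*(x) + \lambda\, x^*(y)| = |x^*(x)| = \|x\|,$$
using $\|x^*\| = 1$ in the first inequality. This is exactly $x \perp_B y$, and there is no obstacle here.

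For the necessity direction, assume $x \perp_B y$. The plan is geometric separation. I would introduce the nonempty open convex set $U = \{z \in \mathbb{X} : \|z\| < \|x\|\}$ and the affine line $M = \{x + \lambda y : \lambda \in \mathbb{K}\}$. The hypothesis says precisely that $\|z\| \geq \|x\|$ for every $z \in M$, so $U \cap M = \varnothing$. The geometric Hahn--Banach separation theorem then supplies a nonzero real-linear functional $f$, which I normalize so that $\|f\| = 1$, together with a real constant $\alpha$ satisfying $f(u) < \alpha \leq f(m)$ for all $u \in U$ and $m \in M$. Two consequences are then extracted: first, since $f$ is bounded below on the affine subspace $M$, it must be constant on $M$; second, taking the supremum over $U$ yields $\|x\| = \sup_{u \in U} f(u) \leq \alpha \leq f(x) \leq \|x\|$, so $f(x) = \|x\|$.

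In the real case this finishes the proof at once: constancy of $f$ on $M$ means $f(x) + \lambda f(y)$ is constant in $\lambda \in \mathbb{R}$, forcing $f(y) = 0$, and together with $f(x) = \|x\|$ and $\|f\| = 1$ this puts $f \in J(x)$. The one place where genuine care is needed is the complex case, where separation only yields a real-linear $f$ that is constant on $M$ viewed as a real two-dimensional affine set; this gives $f(y) = f(iy) = 0$, but not yet a complex-linear functional. The plan there is the standard passage $x^*(z) := f(z) - i\, f(iz)$, which is complex-linear with $\Re x^* = f$ and $\|x^*\| = \|f\| = 1$. Then $x^*(y) = f(y) - i f(iy) = 0$, while $\Re x^*(x) = f(x) = \|x\|$ combined with $|x^*(x)| \leq \|x\|$ forces $x^*(x) = \|x\|$, so that $x^* \in J(x)$ with $x^*(y) = 0$. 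Thus the real-linear separating functional transfers both required properties to a complex-linear one, and this bookkeeping, rather than any deep difficulty, is the main obstacle to watch.
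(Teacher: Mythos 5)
Your proof is correct, and there is nothing to compare it against internally: the paper states this lemma as a known result, citing James's theorem \cite[Th. 2.1]{J} without reproducing a proof. Your argument --- the supporting-functional estimate for sufficiency, and Hahn--Banach separation of the open ball $\{z : \|z\| < \|x\|\}$ from the affine set $\{x + \lambda y\}$ for necessity, with the standard real-to-complex passage $x^*(z) = f(z) - i f(iz)$ --- is precisely the classical proof of the cited characterization, handled correctly including the complex case.
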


	\begin{lemma}\label{Chem}\cite[Th. 2.3]{CSW}
		Let $\mathbb{X}$ be a Banach space. Suppose $\epsilon \in [0,1)$ and let $x, y \in \mathbb{X}.$  Then $x \perp_B^\epsilon y$ if and only if  there exists $x^* \in J(x)$ such that $|x^*(y)| \leq \epsilon\|y\|.$
	\end{lemma}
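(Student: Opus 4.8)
The plan is to prove the two implications separately, the reverse implication being elementary and the forward one requiring the support mapping theorem together with a convexity/separation argument. Throughout I would dispose of the trivial cases $x=0$ or $y=0$ and assume $x,y\neq 0$.

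For the sufficiency, assume there is $x^*\in J(x)$ with $|x^*(y)|\le \epsilon\|y\|$. I would simply estimate, for an arbitrary $\lambda\in\mathbb{K}$,
\[
\|x+\lambda y\|\ \ge\ \Re\big[x^*(x+\lambda y)\big]\ =\ \|x\|+\Re[\lambda x^*(y)]\ \ge\ \|x\|-|\lambda|\,|x^*(y)|\ \ge\ \|x\|-\epsilon\|\lambda y\|,
\]
which is exactly $x\perp_B^\epsilon y$. This uses only $x^*(x)=\|x\|$ and $\|x^*\|=1$, so no extra work is involved.

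For the necessity, the object to exploit is the set $J(x)$, which is nonempty (Hahn--Banach), convex, and weak*-compact (Banach--Alaoglu); consequently its image $W:=\{x^*(y):x^*\in J(x)\}$ is a compact convex subset of $\mathbb{K}$, the map $x^*\mapsto x^*(y)$ being affine and weak*-continuous. The goal reduces to showing that $W$ meets the disc $\mathcal{D}(\epsilon\|y\|)$, since any $x^*$ whose image lies in that disc is the functional we seek. To connect the hypothesis with $W$, I would invoke the classical support mapping theorem: for every direction $v\in\mathbb{X}$ the one-sided derivative $g_v'(0^+)$ of $t\mapsto\|x+tv\|$ exists and equals $\max_{x^*\in J(x)}\Re[x^*(v)]$. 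Arguing by contradiction, suppose $W\cap\mathcal{D}(\epsilon\|y\|)=\varnothing$; since $W$ and the disc are disjoint compact convex sets in $\mathbb{K}$ (viewed as $\mathbb{R}$ or $\mathbb{R}^2$), strict separation yields a phase $\mu\in\mathbb{T}$ with
\[
\min_{x^*\in J(x)}\Re\big[x^*(\bar{\mu}y)\big]\ >\ \epsilon\|y\|.
\]
Taking $v=-\bar{\mu}y$ and using the support formula gives $g_v'(0^+)=-\min_{x^*\in J(x)}\Re[x^*(\bar{\mu}y)]<-\epsilon\|y\|$; by convexity the difference quotient decreases to this derivative, so for small $t>0$ one obtains $\|x-t\bar{\mu}y\|<\|x\|-\epsilon t\|y\|=\|x\|-\epsilon\|(-t\bar{\mu})y\|$. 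Putting $\lambda=-t\bar{\mu}$ contradicts $x\perp_B^\epsilon y$, so $W\cap\mathcal{D}(\epsilon\|y\|)\neq\varnothing$ and the required $x^*$ exists.

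I expect the main obstacle to lie in the necessity direction, and within it in the correct handling of the complex scalar field: the separation must be carried out in the plane and phrased through the phase $\mu\in\mathbb{T}$, and one must be careful that it is the \emph{right} derivative (supplied by convexity) that furnishes a genuine upper bound on $\|x+\lambda y\|$, since the naive support inequality $\|x+v\|\ge\|x\|+\Re[x^*(v)]$ points the wrong way. In the real case everything collapses to a one-dimensional statement: $W$ is a closed interval, the hypothesis forces $\max_{x^*\in J(x)}x^*(y)\ge-\epsilon\|y\|$ and $\min_{x^*\in J(x)}x^*(y)\le\epsilon\|y\|$, and connectedness of $W$ immediately produces a point of $W$ in $[-\epsilon\|y\|,\epsilon\|y\|]$.
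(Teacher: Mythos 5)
Your proof is correct. Note that the paper does not prove this lemma at all: it is quoted as a known result with the citation \cite[Th.\ 2.3]{CSW}, so there is no internal proof to compare against. Your argument is the standard one behind that cited result --- the easy direction via the support functional estimate $\|x+\lambda y\|\ge \Re[x^*(x+\lambda y)]$, and the hard direction via weak*-compactness and convexity of $J(x)$, the max formula $g_v'(0^+)=\max_{x^*\in J(x)}\Re[x^*(v)]$ for the one-sided derivative of the norm, and strict separation of the compact convex set $W=\{x^*(y):x^*\in J(x)\}$ from the disc $\mathcal{D}(\epsilon\|y\|)$ --- and you correctly handle the two points where such proofs usually go wrong: using the \emph{right} derivative (monotone difference quotients) to get an upper bound on $\|x+\lambda y\|$ for small $t$, and carrying out the separation in the plane via a phase $\mu\in\mathbb{T}$ in the complex case.
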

	
	Further we need the following two theorems that characterizes orthogonality of elements in $C_0(K, \mathbb{X}).$
	
	\begin{theorem}\label{martin:continuous}\cite[Th. 3.5]{MMQRS}
		Let $K$ be a locally compact Hausdorff space and let $\mathbb{X}$ be a Banach space. Suppose   $C \subset S_{\mathbb{X}^*}$ is such that $B_{\mathbb{X}^*}= \overline{co(C)}^{w^*}.$  Then  for $f, g \in C_0(K, \mathbb{X} ),$   
		\[ f \perp_B g \iff	0 \in co\bigg(\bigg\{  y^*(g(k)): k \in K, y^* \in C,   y^*(f(k))= \|f\|\bigg\}\bigg).
		\]
	\end{theorem}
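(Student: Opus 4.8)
The plan is to reduce the statement to James's characterization of Birkhoff--James orthogonality (Lemma \ref{James}) and then to analyze the supporting functionals of $f$ in the dual of $C_0(K,\mathbb X)$. Throughout I may assume $\|f\|=1$ (the case $f=0$ being trivial). Writing $S=\{y^*(g(k)) : k\in K,\ y^*\in C,\ y^*(f(k))=\|f\|\}\subset\mathbb K$, I first record that the defining condition $y^*(f(k))=\|f\|=1$ with $y^*\in C\subset S_{\mathbb X^*}$ forces $\|f(k)\|=1$, i.e.\ $k\in M_f$, and $y^*\in J(f(k))$; hence the index set of $S$ is exactly $A:=\{(k,y^*) : k\in M_f,\ y^*\in C\cap J(f(k))\}$, and each pair $(k,y^*)\in A$ produces a functional $\Psi_{k,y^*}\in C_0(K,\mathbb X)^*$, $\Psi_{k,y^*}(h)=y^*(h(k))$, lying in $J(f)$.

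For the direction $(\Leftarrow)$ I would argue directly. If $0\in co(S)$, choose $(k_i,y_i^*)\in A$ and weights $t_i\ge 0$, $\sum_i t_i=1$, with $\sum_i t_i\,y_i^*(g(k_i))=0$, and set $F=\sum_i t_i\Psi_{k_i,y_i^*}$. Then $|F(h)|\le\sum_i t_i\|h(k_i)\|\le\|h\|$ gives $\|F\|\le 1$, while $F(f)=\sum_i t_i\,y_i^*(f(k_i))=\sum_i t_i=1$, so $F\in J(f)$; since $F(g)=\sum_i t_i\,y_i^*(g(k_i))=0$, Lemma \ref{James} yields $f\perp_B g$. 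This step is routine and works identically over $\mathbb R$ and $\mathbb C$.

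The substance is the forward direction $(\Rightarrow)$. The main idea is to compute directional derivatives of the norm using $C$. Since $B_{\mathbb X^*}=\overline{co(C)}^{w^*}$ and $\varphi\mapsto\Re\varphi(x)$ is $w^*$-continuous, one has $\|x\|=\sup_{y^*\in C}\Re\,y^*(x)$, whence for $\lambda\in\mathbb K$
\[
\|f+\lambda g\|=\sup_{(k,y^*)\in K\times C}\Re\big[y^*(f(k))+\lambda\,y^*(g(k))\big],
\]
a supremum of affine functions of $\lambda$ whose value at $\lambda=0$ is attained precisely on $A$. A Danskin-type argument then identifies the one-sided derivative in the direction $e^{i\theta}g$ as $\sup\{\Re[e^{i\theta}s] : s\in S\}$. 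Since $f\perp_B g$ is equivalent to this derivative being nonnegative for every $\theta$, i.e.\ to the support function of $S$ being nonnegative in every direction, I obtain $0\in\overline{co(S)}$. Equivalently, I would contrapose: if $0\notin\overline{co(S)}$, strictly separate to get $\theta$ and $\delta>0$ with $\Re[e^{i\theta}y^*(g(k))]\ge\delta$ on $A$, and then show $\|f-\lambda e^{i\theta}g\|<1$ for small $\lambda>0$, contradicting $f\perp_B g$.

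The main obstacle is twofold and concentrated in this last step. First, making the derivative/separation argument rigorous requires the contact set $A$ to be compact and the map $(k,y^*)\mapsto y^*(g(k))$ to be continuous on it; the latter follows from $\|g(k)-g(k')\|\to 0$ together with $w^*$-continuity, and the former from $M_f$ being compact and $C$ being $w^*$-compact. Second---and this is why the theorem asserts membership in $co(S)$ rather than $\overline{co(S)}$---one must upgrade $0\in\overline{co(S)}$ to $0\in co(S)$; this is exactly where compactness of $A$ is used again, since then $S$ is compact and, by Carath\'eodory's theorem in $\mathbb R$ or $\mathbb C$, $co(S)$ is already closed. Handling these compactness points carefully, together with the passage from real directional derivatives to all complex $\lambda$, is where the real work lies.
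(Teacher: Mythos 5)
The paper offers no proof of this statement to compare against: Theorem \ref{martin:continuous} is imported verbatim from \cite[Th. 3.5]{MMQRS}, so your attempt can only be judged on its own terms. Your backward direction is correct and complete: the observation that $y^*(f(k))=\|f\|$ forces $k\in M_f$ and $y^*\in J(f(k))$, the functionals $\Psi_{k,y^*}$, and the convex combination $F=\sum_i t_i\Psi_{k_i,y_i^*}\in J(f)$ with $F(g)=0$ settle it via Lemma \ref{James} (or directly via $\|f+\lambda g\|\ge |F(f+\lambda g)|=1$). Likewise your preliminary reductions in the forward direction are sound: $\|x\|=\sup_{y^*\in C}\Re\,y^*(x)$ does follow from $B_{\mathbb{X}^*}=\overline{co(C)}^{w^*}$, the map $(k,y^*)\mapsto y^*(g(k))$ is jointly continuous on $K\times (B_{\mathbb{X}^*},w^*)$, and since $f,g$ vanish at infinity the sets $\{k:\|f(k)\|\ge 1-\delta\}$ are compact, so the locally compact $K$ causes no harm.

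The genuine gap sits exactly where you invoke ``$C$ being $w^*$-compact'': that is not a hypothesis, and it cannot be derived from $B_{\mathbb{X}^*}=\overline{co(C)}^{w^*}$. Under the stated hypothesis the supremum $\sup_{y^*\in C}\Re\,y^*(x)$ need not be attained in $C$, so your sentence ``whose value at $\lambda=0$ is attained precisely on $A$'' silently assumes attainment; in fact $C\cap J(f(k))$ can be empty for every $k\in M_f$, making $A$ and $S$ empty. Concretely, take $K$ a singleton, $\mathbb{X}$ the Euclidean plane, $f\equiv x_0\in S_{\mathbb{X}}$, and $C=S_{\mathbb{X}^*}\setminus\{x_0^*\}$ with $x_0^*$ the unique supporting functional at $x_0$: then $\overline{co(C)}=B_{\mathbb{X}^*}$, yet $S=\emptyset$ while $f\perp_B g$ for $g\equiv y_0$ with $x_0^*(y_0)=0$. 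So the literal statement fails for such $C$, and no completion of your argument can prove it in the stated generality; this is worth flagging, since it suggests the quoted hypotheses should be rechecked against \cite{MMQRS}. The same defect infects the Danskin step even when $A\neq\emptyset$: for non-closed $C$ the one-sided derivative of $\lambda\mapsto\|f+\lambda e^{i\theta}g\|$ is controlled by the approximate contact sets $\{(k,y^*):\Re\,y^*(f(k))\ge 1-\delta\}$, whose limit points lie in $K\times\overline{C}^{w^*}$, whereas your separation inequality holds only on $A$; the pairs with $\Re\,y^*(f(k))$ slightly below $1$ are uncontrolled in the estimate $\|f-\lambda e^{i\theta}g\|<1$. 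What your outline does establish is the theorem with $\overline{C}^{w^*}$ in place of $C$ --- equivalently the stated result when $C$ is $w^*$-compact, or when $C=S_{\mathbb{X}^*}$, where every exact contact functional over $B_{\mathbb{X}^*}$ automatically has norm one and hence lies in $C$, so the contact set is compact and your Carath\'eodory upgrade from $0\in\overline{co(S)}$ to $0\in co(S)$ goes through. Those special cases happen to cover every application of Theorem \ref{martin:continuous} made in this paper, but they are not the statement as quoted.
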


	\begin{theorem}\label{approximate:continuous}\cite[Th. 2.8]{SGSP}
		Let $K$ be a locally compact Hausdorff space and let $\mathbb{X}$ be a Banach space. Suppose  $C \subset S_{\mathbb{X}^*}$ is such that $B_{\mathbb{X}^*}= \overline{co(C)}^{w^*}.$  Then  for $f, g \in C_0(K, \mathbb{X} ),$
		\[
		f \perp_B^\epsilon g \iff
		co\bigg(\bigg\{  y^*(g(k)): k \in K, y^* \in C,   y^*(f(k))= \|f\|\bigg\}\bigg) \cap \mathcal{D}(\epsilon\|g\|) \neq \emptyset.
		\]
	\end{theorem}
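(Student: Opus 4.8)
The plan is to convert the statement into a description of the supporting functionals of $f$ via Lemma~\ref{Chem}, and then to match those functionals against the candidate set on the right. Throughout I may assume $\|f\|=1$ after normalizing, and I abbreviate
\[ T := \big\{\, y^*(g(k)) : k \in K,\ y^* \in C,\ y^*(f(k)) = \|f\| \,\big\} \subseteq \mathbb{K}. \]
A first observation is that for $y^* \in C \subseteq S_{\mathbb{X}^*}$ the constraint $y^*(f(k)) = \|f\|$ already forces $\|f(k)\| = \|f\|$, i.e.\ $k \in M_f$, and $y^* \in J(f(k))$; thus $T$ only records genuine supporting functionals over the norm-attainment set $M_f$. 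By Lemma~\ref{Chem}, $f \perp_B^\epsilon g$ is equivalent to the existence of $F \in J(f)$ with $|F(g)| \le \epsilon\|g\|$, so the entire problem reduces to understanding the scalar set $\{F(g) : F \in J(f)\} \subseteq \mathbb{K}$ and showing it meets the disc $\mathcal{D}(\epsilon\|g\|)$ precisely when $co(T)$ does.

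For the implication $(\Leftarrow)$ — the routine direction — I would start from a point $p \in co(T)\cap\mathcal{D}(\epsilon\|g\|)$ and write it as a finite convex combination $p = \sum_{i=1}^n \lambda_i\, y_i^*(g(k_i))$ with $y_i^*\in C$, $y_i^*(f(k_i)) = 1$, $\lambda_i \ge 0$, $\sum_i\lambda_i=1$. Defining $F \in (C_0(K,\mathbb{X}))^*$ by $F(h) = \sum_i \lambda_i\, y_i^*(h(k_i))$, one checks at once that $\|F\|\le \sum_i\lambda_i = 1$ (each evaluation $y_i^*\otimes\delta_{k_i}$ has norm one), that $F(f) = \sum_i\lambda_i\, y_i^*(f(k_i)) = 1 = \|f\|$ so that $F \in J(f)$, and that $F(g) = p$ satisfies $|F(g)| \le \epsilon\|g\|$. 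Lemma~\ref{Chem} then gives $f \perp_B^\epsilon g$.

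The substantial direction is $(\Rightarrow)$. Here I would take $F \in J(f)$ with $|F(g)| \le \epsilon\|g\|$ furnished by Lemma~\ref{Chem} and show $F(g) \in \overline{co}(T)$, so that $\overline{co}(T)\cap\mathcal{D}(\epsilon\|g\|)\neq\emptyset$. The natural tool is the representation $(C_0(K,\mathbb{X}))^* \cong M(K,\mathbb{X}^*)$: norm-attainment of $F$ at $f$ concentrates the representing measure on $M_f$ and forces its Radon--Nikodym density to lie in $J(f(k))$ for almost every $k$, whence $F(g)$ appears as a barycentre $\int_{M_f} h_k^*(g(k))\,d|\mu|(k)$ of values $h^*(g(k))$ with $h^*\in J(f(k))\cap S_{\mathbb{X}^*}$. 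In the spirit of Theorem~\ref{martin:continuous} one may instead argue by contraposition: assuming the intersection is empty, strictly separate the compact convex disc $\mathcal{D}(\epsilon\|g\|)$ from $\overline{co}(T)$ by a real hyperplane in $\mathbb{K}$, and use the separating direction together with compactness of $M_f$ and uniform continuity of $g$ to produce a scalar $\lambda_0$ with $\|f + \lambda_0 g\| < \|f\| - \epsilon|\lambda_0|\,\|g\|$, contradicting $f\perp_B^\epsilon g$.

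I expect two points to be the real obstacles. The first is the passage from arbitrary supporting functionals $h^*\in J(f(k))$ to members of the prescribed norming set $C$: the hypothesis $B_{\mathbb{X}^*} = \overline{co(C)}^{w^*}$ lets one approximate $h^*$ weak$^*$ by convex combinations from $C$, but this must be done while (approximately) preserving the maximality condition $z^*(f(k)) = \|f\|$, which is exactly where the facial geometry of $B_{\mathbb{X}^*}$ and a careful limiting argument enter. The second is the gap between $co(T)$ and $\overline{co}(T)$: both the barycentre and the separation arguments naturally produce the closed convex hull, so to obtain the conclusion with $co(T)$ as stated one should invoke Carath\'eodory's theorem in the at-most-two-real-dimensional field $\mathbb{K}$, together with compactness of $M_f$ (and, if necessary, of a weak$^*$-closed enlargement of $C$), to verify that the relevant extreme values of $\overline{co}(T)$ are attained, so that the disc is met by an honest finite convex combination.
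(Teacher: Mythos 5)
This paper does not actually prove the statement: it is imported verbatim from \cite[Th.~2.8]{SGSP} (the $\epsilon$-analogue of \cite[Th.~3.5]{MMQRS}), so there is no in-paper proof to compare against line by line; I assess your proposal against what such a proof must accomplish. Your $(\Leftarrow)$ direction is correct and complete: the functional $F(h)=\sum_i\lambda_i y_i^*(h(k_i))$ satisfies $\|F\|\le 1$, $F(f)=\|f\|$, $|F(g)|\le\epsilon\|g\|$, and Lemma~\ref{Chem} finishes it.

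The genuine gap is in $(\Rightarrow)$, at exactly the first obstacle you flag, and the fix you sketch cannot work as described. Both of your routes produce the wrong kind of functional: the $M(K,\mathbb{X}^*)$ barycentre argument yields densities $h_k^*\in J(f(k))\subset S_{\mathbb{X}^*}$ that need not belong to $C$ (and already requires a weak*-density via a lifting theorem, which you do not address), while your repair --- weak*-approximating $h^*$ by convex combinations from $C$ ``while approximately preserving the maximality condition'' --- can only ever deliver \emph{approximate} attainment $\Re y^*(f(k))>\|f\|-\delta$, whereas the theorem demands members of $C$ with \emph{exact} attainment $y^*(f(k))=\|f\|$; moreover any limiting procedure lands in $\overline{C}^{w^*}$, not in $C$. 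Exactness within the prescribed set $C$ is the entire content of the statement, and the mechanism that achieves it in the numerical-range approach of \cite{MMQRS} is absent from your sketch: starting from the elementary approximate version (for every $\delta>0$ the disc meets $co\{y^*(g(k)):y^*\in C,\ \Re y^*(f(k))>\|f\|-\delta\}$), one fixes at most three terms by Carath\'eodory, passes to subnets $k_i^n\to k_i$ in the one-point compactification $K\cup\{\infty\}$ and $y_i^{*,n}\to y_i^*$ weak* in $B_{\mathbb{X}^*}$, rules out $k_i=\infty$ because near-attainment forces $\|f(k_i^n)\|\to\|f\|$ while $g,f$ vanish at infinity, and uses the joint continuity of $(k,y^*)\mapsto y^*(g(k))$ (norm continuity of $g$ against weak* convergence of uniformly bounded functionals) to pass the convex combination to the limit. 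Your appeal to ``compactness of $M_f$ and uniform continuity of $g$'' does not substitute for this, because the approximating pairs $(k_i^n,y_i^{*,n})$ are a priori nowhere near $M_f$.

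Your separation variant has the same defect plus one more. A separating functional obtained from $co(T)\cap\mathcal{D}(\epsilon\|g\|)=\emptyset$ constrains $y^*(g(k))$ only on exact-attainment pairs, but to produce $\lambda_0$ with $\|f+\lambda_0 g\|<\|f\|-\epsilon\|\lambda_0 g\|$ you must control $|y^*(f(k))+\lambda_0 y^*(g(k))|$ over \emph{all} pairs $(k,y^*)\in K\times C$; the standard dichotomy ($\Re y^*(f(k))>\|f\|-\delta$, where the separation must be shown to persist, versus $\Re y^*(f(k))\le\|f\|-\delta$, where small $|\lambda_0|$ gives slack) requires upgrading the exact-attainment separation to a $\delta$-approximate one --- which is precisely the missing compactness step above, not a consequence of it. Finally, your Carath\'eodory remark cannot repair the $co$ versus $\overline{co}$ discrepancy by itself: Carath\'eodory bounds the number of terms, not the closedness of $co(T)$, and $T$ is generally not compact since $C$ is not assumed weak*-closed; honest finite combinations inside $co(T)$ emerge only from the subnet argument, and strict separation of $\mathcal{D}(\epsilon\|g\|)$ from the possibly non-closed $co(T)$ is not available in the first place. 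In short: the easy implication and the diagnosis of the obstacles are right, but the proposed treatment of the hard implication would fail at the passage to exact attainment within $C$.
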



	




For a subspace $\mathcal{Y}$ of $C_0(K, \mathbb{X}),$ a mapping $\psi : \mathcal{Y} \to K,$ defined by $\psi(f)= k_f,$ for some $k_f \in M_f,$ is called a selection map corresponding to $\mathcal{Y}$.  Let   $$ \mathcal{T}_\mathcal{Y} = \{\psi: \mathcal{Y} \to K , \,  \text{$\psi$ is a selection map corresponding to $\mathcal{Y}$}\}.$$ 	For $\psi \in \mathcal{T}_\mathcal{Y}$,  we write $\mathcal{A}_{\psi}= \{ f(k_f):   f \in S_\mathcal{Y}, \psi(f)= k_f \}.$ Whenever the context is clear we only write $\psi$ as selection map.  Now we are in a position to present a necessary condition for (strongly) anti-coproximinal subspace in the space $C_0(K, \mathbb{X}).$ 

\begin{lemma}\label{lemma:1}
	Let $K$ be a locally compact  Hausdorff space and let $\mathcal{Y}$ be a closed proper subspace of $C_0(K, \mathbb{X})$. 
	\begin{itemize}
		
		\item[(i)] If $\mathcal{Y}$ is anti-coproximinal in $C_0(K, \mathbb{X})$ then for any $\psi \in \mathcal{T}_\mathcal{Y}$ there exists no $x \in \mathbb{X} \setminus \{0\} $ such that $\mathcal{A}_{\psi} \perp_B x.$

		\item [(ii) ] If $\mathcal{Y}$ is strongly anti-coproximinal in $C_0(K, \mathbb{X})$ then for any $\psi \in \mathcal{T}_\mathcal{Y}$  there exists no $x \in \mathbb{X} \setminus \{0\} $  and no $\epsilon \in [0,1)$ such that $\mathcal{A}_{\psi} \perp_B^\epsilon x.$ 
	\end{itemize}
\end{lemma}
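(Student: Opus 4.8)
The plan is to prove both parts by contradiction, in each case manufacturing a single nonzero witness function $g \in C_0(K,\mathbb{X})$ that is (approximately) orthogonal to all of $\mathcal{Y}$, thereby violating (strong) anti-coproximinality. For (i), suppose to the contrary that there exist a selection map $\psi \in \mathcal{T}_\mathcal{Y}$ and a nonzero $x \in \mathbb{X}$ with $\mathcal{A}_\psi \perp_B x$. By the definition of $\mathcal{A}_\psi$ this says precisely that $f(k_f) \perp_B x$ for every $f \in S_\mathcal{Y}$, where $k_f = \psi(f) \in M_f$. The idea is to spread the single vector $x$ across $K$ by a scalar bump: I would fix any nonzero scalar function $h \in C_0(K,\mathbb{K})$ (which exists since $K$ is a nonempty locally compact Hausdorff space) and set $g(k) := h(k)\,x$. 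Then $g \in C_0(K,\mathbb{X}) \setminus \{0\}$ because $\|g(k)\| = |h(k)|\,\|x\|$.

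The key elementary observation is a pointwise-to-global principle: if $k_0 \in M_f$ and $f(k_0) \perp_B g(k_0)$, then $f \perp_B g$, since for every scalar $\lambda$ we have $\|f + \lambda g\| \geq \|f(k_0) + \lambda g(k_0)\| \geq \|f(k_0)\| = \|f\|$. Combining this with the homogeneity of Birkhoff--James orthogonality in its second slot, namely $a \perp_B x \Rightarrow a \perp_B cx$ for all $c \in \mathbb{K}$, I obtain for each $f \in S_\mathcal{Y}$ that $f(k_f) \perp_B x$ gives $f(k_f) \perp_B h(k_f)x = g(k_f)$, and hence $f \perp_B g$. By homogeneity in the first slot this extends from $S_\mathcal{Y}$ to all of $\mathcal{Y}$, so $\mathcal{Y} \perp_B g$ with $g \neq 0$, contradicting anti-coproximinality. (One could instead invoke Theorem \ref{martin:continuous}, but direct evaluation at $k_f$ is cleaner.)

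For (ii) the same construction works verbatim, with $\perp_B$ replaced by $\perp_B^\epsilon$. Assuming $\mathcal{A}_\psi \perp_B^\epsilon x$ for some $\psi$, some nonzero $x$, and some $\epsilon \in [0,1)$, I take the same $g = h\,x$. Since $\epsilon$-orthogonality is also homogeneous, $a \perp_B^\epsilon x \Rightarrow a \perp_B^\epsilon cx$, we get $f(k_f) \perp_B^\epsilon g(k_f)$ for each $f \in S_\mathcal{Y}$. The pointwise-to-global step now runs through the estimate
\[
\|f + \lambda g\| \;\geq\; \|f(k_f) + \lambda g(k_f)\| \;\geq\; \|f(k_f)\| - \epsilon|\lambda|\,\|g(k_f)\| \;\geq\; \|f\| - \epsilon|\lambda|\,\|g\|,
\]
where the last inequality uses $\|g(k_f)\| \leq \|g\|$ together with $\|f(k_f)\| = \|f\|$. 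This yields $f \perp_B^\epsilon g$ for all $f \in S_\mathcal{Y}$, hence for all $f \in \mathcal{Y}$, so $\mathcal{Y} \perp_B^\epsilon g$ with the same $\epsilon$ and $g \neq 0$, contradicting strong anti-coproximinality.

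There is no genuine obstacle here; the two points requiring care are (a) the existence of a nonzero $h \in C_0(K,\mathbb{K})$, immediate from $K$ being a nonempty locally compact Hausdorff space, and (b) the homogeneity of $\perp_B$ and $\perp_B^\epsilon$ in both arguments, which are routine consequences of the defining inequalities. The conceptual heart of the argument is simply the recognition that the hypothesis on $\mathcal{A}_\psi$ already supplies, at the selection point of each unit vector $f$, exactly the pointwise (approximate) orthogonality needed, and that multiplying $x$ by an arbitrary scalar function in $C_0(K,\mathbb{K})$ both preserves this orthogonality and produces an admissible nonzero element of $C_0(K,\mathbb{X})$.
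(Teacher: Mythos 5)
Your proof is correct, and the witness construction is exactly the paper's: assuming $\mathcal{A}_\psi \perp_B x$ (resp.\ $\perp_B^\epsilon x$) for some nonzero $x$, you form $g(k)=h(k)x$ with a nonzero scalar function $h$, just as the paper takes $g(k)=\alpha(k)z$ with $\alpha \in S_{C_0(K)}$. Where you genuinely diverge is in verifying $\mathcal{Y}\perp_B g$ (resp.\ $\mathcal{Y}\perp_B^\epsilon g$): the paper routes this through duality, using Lemma \ref{James} to produce $y_1^* \in J(f(k_f))$ with $y_1^*(g(k_f))=0$ and then invoking the convex-hull characterizations of Theorems \ref{martin:continuous} and \ref{approximate:continuous}, whereas you bypass functionals entirely with the pointwise-to-global estimate $\|f+\lambda g\| \geq \|f(k_f)+\lambda g(k_f)\| \geq \|f(k_f)\| - \epsilon|\lambda|\,\|g(k_f)\| \geq \|f\|-\epsilon\|\lambda g\|$, which is valid because the sup norm dominates evaluation at any point and $k_f \in M_f$ gives $\|f(k_f)\|=\|f\|$ (the case $h(k_f)=0$ is harmless, as orthogonality to the zero vector is trivial). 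Your version only needs the easy direction of the pointwise-global relationship, so it is more elementary and self-contained; it also makes part (ii) fully explicit, where the paper merely writes that it ``follows using similar arguments,'' and the homogeneity facts you rely on in both slots of $\perp_B$ and $\perp_B^\epsilon$ are routine and correctly stated. What the paper's heavier route buys is uniformity with the rest of the article: the same functional-based machinery (Lemmas \ref{James}, \ref{Chem} and Theorems \ref{martin:continuous}, \ref{approximate:continuous}) is reused repeatedly in the subsequent results (e.g.\ Proposition \ref{prop:anti} and Theorems \ref{sufficient; strong}, \ref{theorem:C(K,X)}), where the convex-hull criterion is genuinely needed and cannot be replaced by a one-point evaluation; for this particular lemma, however, your direct argument is arguably the cleaner proof.
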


\begin{proof}
	(i)      Suppose on the contrary that  there exists a $z \in S_\mathbb{X}$ such that $\mathcal{A}_{\psi} \perp_B z$ for some   $\psi \in   \mathcal{T}_\mathcal{Y}$. Let $\alpha \in S_{C_0(K)}$. Define $g : K \to \mathbb{X}$ such that $g(k)= \alpha(k) z.$ Clearly, $ g \in C_0(K, \mathbb{X})$ and $\|g\|=1.$  Let $f \in S_\mathcal{Y}$ and $k_f \in Im ~\psi.$ Then $f(k_f) \in \mathcal{A}_{\psi}.$ As $\mathcal{A}_{\psi} \perp_B z,$ we have $f(k_f) \perp_B z.$ Following Lemma \ref{James}, there exists $y_1^* \in S_{\mathbb{X}^*}$ such that $y_1^*(f(k_f))=\|f\|$ and $y_1^*(z)=0.$ This implies $y_1^*(g(k_f))=0.$ 
	Therefore
	\[ 0 \in co\bigg(\bigg\{  y^*(g(k)): k \in K, y^* \in C,   y^*(f(k))= \|f\|\bigg\}\bigg).\]
	From Theorem \ref{martin:continuous}, $f \perp_B g$ and consequently $\mathcal{Y} \perp_B g.$ This contradicts that $\mathcal{Y}$ is anti-coproximinal in $C_0(K, \mathbb{X}).$

	(ii) follows using similar arguments as (i).
\end{proof}

\begin{remark}
	Let  $\mathcal{Y}$ be a  (strongly) anti-coproximinal subspace of  $C_0(K, \mathbb{X}).$ Then either $span~\mathcal{A}_{\psi} = \mathbb{X}$ or $ span~ \mathcal{A}_{\psi}$ is  (strongly) anti-coproximinal in $\mathbb{X}.$
\end{remark}
In the following theorem we present a necessary condition for strongly anti-coproximinal subspace in terms of w-ALUR and ALUR points.

\begin{theorem}\label{prop:C(K,X)}
	Let $K$ be a locally compact  Hausdorff space   and let $\mathcal{Y}$ be a strongly anti-coproximinal subspace of $C_0(K, \mathbb{X})$. 
	Then for any $\psi \in \mathcal{T}_{\mathcal{Y}},$
	\begin{itemize}
		\item[(i)]  $ \overline{\mathcal{A}_{\psi}}^w $ contains all w-ALUR points of $B_{\mathbb{X}}. $ 
		
		\item[(ii)]  $ \overline{\mathcal{A}_{\psi}} $ contains all ALUR points of $B_{\mathbb{X}}. $ 
		
	\end{itemize}
\end{theorem}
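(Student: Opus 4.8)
The plan is to prove both parts by contradiction, running (i) and (ii) in parallel with ``weak'' systematically replaced by ``norm''. Fix $\psi \in \mathcal{T}_{\mathcal{Y}}$ and suppose some w-ALUR (resp.\ ALUR) point $x_0 \in S_{\mathbb{X}}$ fails to lie in $\overline{\mathcal{A}_\psi}^w$ (resp.\ $\overline{\mathcal{A}_\psi}$). Since every element of $\mathcal{A}_\psi$ has the form $f(k_f)$ with $f \in S_{\mathcal{Y}}$ and $k_f = \psi(f) \in M_f$, one has $\|f(k_f)\| = \|f\| = 1$, so $\mathcal{A}_\psi \subset S_{\mathbb{X}}$. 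The goal is therefore to manufacture a sequence in $\mathcal{A}_\psi$ converging to $x_0$ weakly (resp.\ in norm); this contradicts either the weak separation furnished by $x_0 \notin \overline{\mathcal{A}_\psi}^w$ or the strict bound $\operatorname{dist}(x_0, \mathcal{A}_\psi) > 0$.

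The engine is Lemma \ref{lemma:1}(ii). Applied to the fixed nonzero vector $x_0$ with $\epsilon = 1 - \tfrac1n$, it yields for each $n$ a witness $a_n \in \mathcal{A}_\psi$ with $a_n \not\perp_B^{\,1-1/n} x_0$; by the contrapositive of Lemma \ref{Chem}, every $a^* \in J(a_n)$ then satisfies $|a^*(x_0)| > 1 - \tfrac1n$. Fixing $a_n^* \in J(a_n)$ and writing $a_n^*(x_0) = r_n e^{i\theta_n}$ with $r_n \to 1$, the rotated pair $\big(e^{i\theta_n} a_n,\, e^{-i\theta_n} a_n^*\big)$ satisfies $e^{-i\theta_n} a_n^*\big(e^{i\theta_n} a_n\big) = 1$ and $e^{-i\theta_n} a_n^*(x_0) = r_n \to 1$. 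Consequently $e^{-i\theta_n} a_n^*\big(\tfrac{e^{i\theta_n}a_n + x_0}{2}\big) \to 1$, which forces $\big\|\tfrac{e^{i\theta_n}a_n + x_0}{2}\big\| \to 1$: the (rotated) witnesses become asymptotically aligned with $x_0$.

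It then remains to feed $\{e^{i\theta_n} a_n\}$ as the inner sequence, together with a well-chosen outer sequence $\{x_m^*\} \subset B_{\mathbb{X}^*}$, into the defining iterated limit of the (w-)ALUR property, verify $\lim_m \lim_n x_m^*\big(\tfrac{e^{i\theta_n}a_n + x_0}{2}\big) = 1$, and thereby conclude $e^{i\theta_n} a_n \to x_0$ (resp.\ $\xrightarrow{w} x_0$). This is the decisive step and the principal obstacle. The naive choice $x_m^* = a_m^*$ is inadequate: for fixed $m$ the inner limit $\lim_n a_m^*(a_n)$ is uncontrolled, since $a_m^*$ is adapted to $a_m$ rather than to the tail of the sequence. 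The outer functionals must instead asymptotically norm both the tail $\{e^{i\theta_n}a_n\}$ and $x_0$ at once; I would extract them from weak*-cluster points of $\{e^{-i\theta_n}a_n^*\}$ lying in $J(x_0)$, via weak* compactness of $B_{\mathbb{X}^*}$ together with a diagonal/subnet argument. Here the global strength of strong anti-coproximinality, being a hypothesis in \emph{every} direction, is what prevents $\mathcal{A}_\psi$ from concentrating and obstructing such a choice.

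Finally I would address the residual phase/sign bookkeeping. The above convergence gives $a_n \to e^{-i\theta_n} x_0$, i.e.\ places $a_n$ near the orbit $\{e^{i\theta} x_0\}$ rather than at $x_0$ itself; in the real case $\theta_n \in \{0, \pi\}$, and the value $\pi$ produces convergence to $-x_0$ and must be excluded or handled on its own. Pinning the phase so that the limit is $x_0$ (not a nontrivial rotation of it) is the second point requiring genuine care, and I expect it to rely again on invoking strong anti-coproximinality in auxiliary directions. Once these two points are settled, the remaining estimates are routine, and the contradiction with $x_0 \notin \overline{\mathcal{A}_\psi}^w$ (resp.\ $\overline{\mathcal{A}_\psi}$) closes both parts.
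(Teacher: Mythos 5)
Your opening matches the paper's proof exactly: argue by contradiction, invoke Lemma \ref{lemma:1}(ii) with $\epsilon_n \to 1$ to get witnesses $a_n = f_n(k_n) \in \mathcal{A}_\psi$ with $a_n \not\perp_B^{\epsilon_n} x_0$, and use Lemma \ref{Chem} to conclude that every support functional $a_n^* \in J(a_n)$ satisfies $|a_n^*(x_0)| > \epsilon_n\|x_0\|$. But the proposal stops being a proof precisely where you announce ``the decisive step and the principal obstacle,'' and the repair you sketch there would fail. If $x^*$ is a weak*-cluster point of the (rotated) support functionals lying in $J(x_0)$, nothing whatsoever controls $\lim_n x^*(b_n)$: the cluster functional norms $x_0$ but bears no relation to the tail of the vector sequence, which is exactly the defect you yourself diagnosed in the naive choice $x_m^* = a_m^*$. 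So the iterated limit $\lim_m \lim_n x_m^*\bigl(\tfrac{b_n + x_0}{2}\bigr) = 1$ cannot be verified this way, and no ``diagonal/subnet argument'' is exhibited that would fix it. The paper never constructs an outer sequence at all: weak*-compactness of $B_{\mathbb{X}^*}$ is used only to produce a cluster point $x^*$ and then (via Kelley) a subsequence along which the \emph{scalar} values $x_{n_r}^*(w)$ converge to a number of modulus $\|w\| = 1$; this forces $x^* \in J(w)$ or $x^* \in J(-w)$, and in the first case one gets $x_{n_r}^*\bigl(\tfrac{a_{n_r} + w}{2}\bigr) = \tfrac12 + \tfrac12 x_{n_r}^*(w) \to 1$, whereupon the w-ALUR (resp.\ ALUR) property is applied directly to the paired diagonal sequences $(x_{n_r}^*, a_{n_r})$ to conclude $a_{n_r} \xrightarrow{w} w$ (resp.\ $a_{n_r} \to w$). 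In other words, the cluster-point extraction simultaneously upgrades $|x_{n_r}^*(w)| > \epsilon_{n_r}$ to genuine convergence $x_{n_r}^*(w) \to \pm 1$ and fixes the phase, so the rotundity property is fed a single pair of sequences rather than a two-parameter array.

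Your second flagged issue — the phase/sign bookkeeping — is also handled differently, and your proposed remedy cannot work as described. By rotating the vectors to $b_n = e^{i\theta_n} a_n$ you leave the set $\mathcal{A}_\psi$, so even a successful limit lands you on the orbit $\{e^{i\theta} x_0\}$, as you note; and invoking strong anti-coproximinality ``in auxiliary directions'' cannot break the symmetry, because $\perp_B^\epsilon$ is invariant under $x_0 \mapsto \lambda x_0$ with $|\lambda| = 1$, so Lemma \ref{lemma:1} delivers literally the same witnesses for $x_0$ and for $-x_0$. The paper avoids rotating the vectors: it keeps $a_{n_r} \in \mathcal{A}_\psi$ fixed, absorbs the phase into the dichotomy $x^* \in J(w)$ versus $x^* \in J(-w)$, and disposes of the second case by the symmetric computation yielding $-w \in \overline{\mathcal{A}_\psi}^w$ (with $-w$ itself a w-ALUR point falling under the theorem's conclusion). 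Your instinct that this point is delicate is sound — the paper's treatment of the $J(-w)$ case is terse — but a proposal that defers both the application of the ALUR property and the phase exclusion to unspecified future arguments, where all of the content of the theorem resides, has genuine gaps and is not yet a proof.
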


\begin{proof}
	(i) Suppose on the contrary that $ w \in S_{\mathbb{X}}$ is a w-ALUR point of $B_{\mathbb{X}}$ such that $w \notin \overline{\mathcal{A}_{\psi}}^w$ for some $\psi \in \mathcal{T}_{\mathcal{Y}}.$
	As $\mathcal{Y}$ is strongly anti-coproximinal, following Lemma \ref{lemma:1} we get that there exists no $x \in \mathbb{X} \setminus\{0\}$ and no $\epsilon \in[0,1)$  such that $\mathcal{A}_{\psi} \perp_B^{\epsilon} x.$   This implies for any $\epsilon_n \in [0,1) \to 1, $ there exists $ f_n(k_n) \in \mathcal{A}_{\psi}$ such that $f_n(k_n) \not\perp_B^{\epsilon_n} w.$ Let $x_n^* \in J(f_n(k_n)),~ \forall n \in \mathbb{N}.$ Following Lemma \ref{Chem} we get $ |x_n^*(w)| > \epsilon_n \|w\|.$ Since $B_{\mathbb{X}^*}$ is weak*-compact,
	it follows that there exists a weak*-cluster point $x^* \in B_{\mathbb{X}^*}$ of the sequence $\{x_n^*\}.$
	This implies $x^*(w)$ is a cluster point of $\{x_n^*(w)\}. $ Following \cite[ Th. 8 (p. 72)]{kelly}, there exists a subsequence $\{x_{n_r}^*(w)\}$ of $\{x_n^*(w)\}$ such that $x_{n_r}^*(w) \to x^*(w).$ As $|x_{n_r}^*(w)| > \epsilon_{n_r}\|w\|,$ for each $r \in \mathbb{N},$ therefore $|x^*(w)| \geq \|w\|,$ which in turn implies that $|x^*(w)|= \|w\|.$ This implies $x^* \in J(w) $ or $x^* \in J(-w).$ Suppose that $x^* \in J(w).$ Note that
	$$ x_{n_r}^* \bigg(\frac{f_n(k_n) +w}{2}\bigg)=  \frac{x_{n_r}^*(f_n(k_{n_r}))}{2} + \frac{x_{n_r}^*(w)}{2}= \frac{1}{2}+\frac{1}{2} x_{n_r}^*(w) ,$$ 
	and  so $\lim  x_{n_r}^* (\frac{f_n(k_{n_r})+w}{2})=1.$ Also,
	$w$ being an w-ALUR point, we get that $f_n(k_{n_r}) \overset{w}{\longrightarrow} w.$ This contradicts the fact that $w \in \overline{\mathcal{A}_{\psi}}^w.$ If $x^* \in J(-w)$ then proceeding similarly we get $\lim x^*(\frac{f_n(n_{k_r}) - w}{2}) = 1$ and consequently, $-w \in \overline{\mathcal{A}_{\psi}}^w,$ again a contradiction. This completes the proof. \\

	(ii) follows  similarly  as (i).
\end{proof}

In case the space $\mathbb{X}$ is finite-dimensional, we have the following result.

\begin{theorem}\label{rotund}
	Let $K$ be a locally compact  Hausdorff space and let $\mathbb{X}$ be finite-dimensional.  Suppose that $\mathcal{Y}$ is a strongly anti-coproximinal subspace of $C_0(K, \mathbb{X})$ and 
	$\psi \in \mathcal{T}_{\mathcal{Y}}.$	Then the following holds:
	\begin{itemize}
		\item[(i)] Let $F$ be a  maximal face of $B_{\mathbb{X}}$ and $w \in int(F).$ Then there exists $u \in \overline{\mathcal{A}_{\psi}}$ such that    $J(w) \cap J(u) \neq \emptyset.$ 
		\item [(ii)]     For any maximal face $F$ of $B_{\mathbb{X}}$,   $ \overline{\mathcal{A}_{\psi}} \cap F \neq \emptyset.$

		\item[(iii)]   $\overline{\mathcal{A}_{\psi}}$ contains all rotund points of $B_{\mathbb{X}}.$
	\end{itemize}
\end{theorem}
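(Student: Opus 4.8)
The plan is to prove (i) first, deduce (ii) from it, and obtain (iii) from the already established ALUR result. Throughout I use that, since $\mathbb{X}$ is finite-dimensional, $S_{\mathbb{X}}$ and $B_{\mathbb{X}^*}$ are norm-compact, the weak and norm topologies agree (so $\overline{\mathcal{A}_{\psi}}^w=\overline{\mathcal{A}_{\psi}}$), and $\mathcal{A}_{\psi}\subset S_{\mathbb{X}}$.

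For (i), fix a maximal face $F$ and $w\in int(F)$. The key geometric observation is that a supporting functional at a relative-interior point of a face supports the whole face: if $y^*\in J(w)$ then $y^*\equiv 1$ on $F$, so $F\subseteq F_{y^*}:=\{x\in S_{\mathbb{X}}:y^*(x)=1\}$; as $F$ is maximal and $F_{y^*}$ is a proper face, $F=F_{y^*}$, i.e.\ every $y^*\in J(w)$ exposes exactly $F$. To produce $u$ I mirror the proof of Theorem \ref{prop:C(K,X)}. Since $\mathcal{Y}$ is strongly anti-coproximinal, Lemma \ref{lemma:1}(ii) rules out $\mathcal{A}_{\psi}\perp_B^{\epsilon}w$ for every $\epsilon\in[0,1)$; choosing $\epsilon_n\uparrow 1$ yields $a_n=f_n(k_n)\in\mathcal{A}_{\psi}$ with $a_n\not\perp_B^{\epsilon_n}w$, and by Lemma \ref{Chem} any $a_n^*\in J(a_n)$ satisfies $|a_n^*(w)|>\epsilon_n$. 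By compactness I pass to subsequences $a_{n_r}\to u\in\overline{\mathcal{A}_{\psi}}$ and $a_{n_r}^*\to x^*\in B_{\mathbb{X}^*}$; then $x^*(u)=\lim a_{n_r}^*(a_{n_r})=1$, so $x^*\in J(u)$, while $|x^*(w)|=\lim|a_{n_r}^*(w)|=1$.

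If $x^*(w)=1$ then $x^*\in J(w)\cap J(u)$ and (i) follows. The case $x^*(w)=-1$ is the main obstacle: here $x^*\in J(-w)\cap J(u)$, so the limit has effectively landed in $-F$, and since every element of $J(w)$ equals $1$ on $F$ whereas $x^*(w)=-1$, this particular $u$ need not satisfy $J(w)\cap J(u)\neq\emptyset$. I expect the reduction furnished by Lemma \ref{lemma:1}(ii) to be insufficient here, because $\perp_B^{\epsilon}$ is insensitive to the sign of the scalar and hence cannot by itself separate $F$ from $-F$; indeed, a subset of $S_{\ell_\infty^2}$ meeting the relative interiors of two adjacent edges already satisfies the conclusion of Lemma \ref{lemma:1}(ii) while missing the remaining two edges. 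To overcome this I would return to the full definition of strong anti-coproximinality on $C_0(K,\mathbb{X})$: assuming $\overline{\mathcal{A}_{\psi}}\cap F=\emptyset$ I would build an explicit $g\in C_0(K,\mathbb{X})$ with $\mathcal{Y}\perp_B^{\epsilon}g$ for some $\epsilon<1$ via the characterization in Theorem \ref{approximate:continuous}, exploiting that assigning different values of $g$ at different points of $K$ provides exactly the freedom needed to break the $F$/$-F$ symmetry that Lemma \ref{lemma:1}(ii) cannot see.

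Granting (i), part (ii) is immediate: for a maximal face $F$ choose $w\in int(F)$ and take $y^*\in J(w)\cap J(u)$ with $u\in\overline{\mathcal{A}_{\psi}}$; by the observation above $y^*$ exposes exactly $F$, so $y^*(u)=1$ with $u\in S_{\mathbb{X}}$ gives $u\in F$, whence $\overline{\mathcal{A}_{\psi}}\cap F\neq\emptyset$. For (iii) I would use that in a finite-dimensional space every rotund point is an ALUR point: if $w$ is rotund and $\|x_n+w\|/2\to 1$ with $x_n\in B_{\mathbb{X}}$, then any subsequential limit $y$ of $\{x_n\}$ satisfies $\|y+w\|/2=1$ and hence $y=w$, so $x_n\to w$ ($w$ is in fact LUR, hence ALUR). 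Thus (iii) is exactly Theorem \ref{prop:C(K,X)}(ii) together with $\overline{\mathcal{A}_{\psi}}^w=\overline{\mathcal{A}_{\psi}}$; alternatively, for rotund $w$ the singleton $\{w\}$ is a maximal face with $int(\{w\})=\{w\}$, so (iii) is the instance $F=\{w\}$ of (ii).
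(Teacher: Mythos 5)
Your parts (ii) and (iii) are correct and essentially the paper's own: for (ii) the paper deduces $u\in F$ by showing $co(F\cup\{u\})\subset S_{\mathbb{X}}$ and invoking maximality, while you use the equivalent observation that any $x^*\in J(w)$ with $w\in int(F)$ satisfies $x^*\equiv 1$ on $F$, so that $F=\{x\in S_{\mathbb{X}}:x^*(x)=1\}$ by maximality; these are the same computation packaged differently. For (iii) the paper uses exactly your second route (a rotund point is itself a maximal face), and your first route through Theorem \ref{prop:C(K,X)}(ii) is also legitimate, since in finite dimensions weak and norm closures coincide and rotund points are LUR, hence ALUR.

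The genuine gap is in (i), on which (ii) and hence (iii) rest: in the case $x^*(w)=-1$ you only announce a strategy (``I would build an explicit $g$ via Theorem \ref{approximate:continuous}'') without constructing $g$, exhibiting an $\epsilon<1$, or deriving a contradiction, so your proposal does not prove (i). What makes this worth spelling out is that your diagnosis is exactly right about the paper as well. The paper's proof of (i) uses strong anti-coproximinality only through the conclusion of Lemma \ref{lemma:1}(ii), and in the case $-x^*\in J(w)$ it ends with ``$J(w)\cap J(-u)\neq\emptyset$,'' which gives the stated claim only if $-u\in\overline{\mathcal{A}_{\psi}}$ --- and nothing forces $\mathcal{A}_{\psi}$ to be symmetric, since $\psi(-f)$ is an arbitrary point of $M_{-f}=M_f$ with no relation to $\psi(f)$. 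Your $\ell_\infty^2$ example makes the obstruction rigorous: the set $A=\{(1,0),(0,1)\}$ has singleton supporting sets $J((1,0))=\{(1,0)\}$ and $J((0,1))=\{(0,1)\}$ in $\ell_1^2$, so $A\perp_B^{\epsilon}x$ would force $\max(|x_1|,|x_2|)\le\epsilon\|x\|_\infty$, i.e.\ $x=0$; thus $A$ satisfies the full conclusion of Lemma \ref{lemma:1}(ii), yet for $w=(-1,0)$ in the interior of the maximal face $\{-1\}\times[-1,1]$ no element of $A$ shares a supporting functional with $w$. So (i) simply does not follow from Lemma \ref{lemma:1}(ii) alone ($\perp_B^{\epsilon}$ cannot distinguish $F$ from $-F$), any complete proof must re-enter $C_0(K,\mathbb{X})$ as you propose, and the paper's argument as written establishes only the disjunction ``$J(w)\cap J(u)\neq\emptyset$ or $J(w)\cap J(-u)\neq\emptyset$ for some $u\in\overline{\mathcal{A}_{\psi}}$'' --- precisely the partial result you obtain. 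In short: you proved exactly as much as the paper's own argument actually proves, and you located the missing step with a correct counterexample to the method, but your proposal does not close that step.
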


\begin{proof}
	(i) Let $\psi \in \mathcal{T}_{\mathcal{Y}}$. Let $F$ be a maximal face and let $w \in int(F)$. As $\mathcal{Y}$ is strongly anti-coproximinal, following Lemma \ref{lemma:1} we get that there exists no $x \in \mathbb{X} \setminus\{0\}$ and no $\epsilon \in[0,1)$  such that $\mathcal{A}_{\psi} \perp_B^{\epsilon} x.$ 
	Therefore, for any $\epsilon_n \in [0,1) \to 1, $ there exists $ f_n(k_n) \in \mathcal{A}_{\psi}$ such that $f_n(k_n) \not\perp_B^{\epsilon_n} w.$ Let $x_n^* \in J(f_n(k_n)),~ \forall n \in \mathbb{N}.$ From Lemma \ref{Chem}, we get $ |x_n^*(w)| > \epsilon_n \|w\|.$ As $B_{\mathbb{X}^*}$ is weak*-compact,
	it follows that there exists a weak*-cluster point $x^* \in B_{\mathbb{X}^*}$ of the sequence $\{x_n^*\}.$
	This implies $x^*(w)$ is a cluster point of $\{x_n^*(w)\}. $ Following \cite[ Th. 8 (p. 72)]{kelly}, there exists a subsequence $\{x_{n_r}^*(w)\}$ of $\{x_n^*(w)\}$ such that $x_{n_r}^*(w) \to x^*(w).$ As $|x_{n_r}^*(w)| > \epsilon_{n_r}\|w\|,$ for each $r \in \mathbb{N},$ therefore $|x^*(w)| \geq \|w\|,$ which in turn implies that $|x^*(w)|= \|w\|.$
	This implies either  $x^* \in J(w)$ or $-x^* \in J(w).$ Since $S_\mathbb{X}$ is compact, it follows that $f_{n_r}(k_{n_r}) \to u,$ for some $u \in S_\mathbb{X}$. Clearly, $u \in \overline{\mathcal{A}_{\psi}}.$ So for any $r \in \mathbb{N},$
	\begin{eqnarray*}
		|x_{n_r}^*(f_{n_r}(k_{n_r})) - x^*(u)| &=&  |x_{n_r}^*(f_{n_r}(k_{n_r})) - x_{n_r}^*(u) + x_{n_r}^*(u)- x^*(u)|\\
		&\leq & \|x_{n_r}^*\| \|f_{n_r}(k_{n_r})- u\| + \| x_{n_r}^*(u)- x^*(u)\|.
	\end{eqnarray*}
	Taking $r \to \infty,$ in the above relation we get  $x_{n_r}^*(f_{n_r}(k_{n_r})) \longrightarrow x^*(u).$ Since for each $ r,$ $x_{n_r}^*(f(k_{n_r}))=1,$ we have $x^*(u)=1.$ In other words, $x^* \in J(u).$ So, whenever $x^* \in J(w),$ we have $J(w) \cap J(u) \neq \emptyset$. Again whenever $-x^* \in J(w)$ we have $-x^* \in J(-u)$ and therefore $J(w) \cap J(-u) \neq \emptyset.$  \\
	
	(ii)  Let $F$ be a maximal face of $B_{\mathbb{X}}$ and let $w \in int(F).$ From (i),	we get an element $u \in \overline{\mathcal{A}_{\psi}}$ such that $J(w) \cap J(u) \neq \emptyset.$ Suppose that  $x^* \in J(w) \cap J(u).$  As $w \in int(F)$ and $x^*(w)=1,$ it is easy to check that for any $v \in F,$ $x^*(v)=1.$
	Let $x \in co(F \cup \{u\}).$ Then $x= (1-t) z+ tu,$ for some $t \in [0,1]$ and $z \in F.$ The relation $x^*(u)=x^*(z)=1$ yields that $x^*(x)=1.$ This implies that $x \in S_{\mathbb{X}}.$ So, $co(F \cup \{u\}) \subset S_{\mathbb{X}}.$ Therefore, there exists some face $F' $ of $B_{\mathbb{X}}$ such that $F \subset co(F \cup \{u\}) \subset F'.$ As $F$ is a maximal face, we have $F=co(F \cup \{u\}),$ so $u \in F.$ Therefore, $\overline{\mathcal{A}_{\psi}} \cap F \neq \emptyset,$ as desired.\\
	
	(iii) follows from (ii) considering the fact that every rotund point is a maximal face of the unit ball.
	
\end{proof}

Further down the line if $\mathbb{X}$ is a finite-dimensional real polyhedral Banach space, then we show that each $\mathcal{A}_{\psi}$ intersects interior of every maximal face of $B_{\mathbb{X}},$ for any $\psi \in \mathcal{T}_{\mathcal{Y}}.$

\begin{theorem}\label{prop:polyhedral}
Let $K$ be a locally compact Hausdorff space and let $\mathbb{X}$ be a finite-dimensional polyhedral Banach space. Suppose that $\mathcal{Y}$ is strongly anti-coproximinal in $C_0(K, \mathbb{X}).$ Then for any $\psi \in \mathcal{T}_{\mathcal{Y}}$,  $\mathcal{A}_{\psi}$ intersects interior of every maximal face of $B_{\mathbb{X}}$. 
\end{theorem}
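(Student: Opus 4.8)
The plan is to argue by contradiction and reduce everything to the necessary condition recorded in Lemma \ref{lemma:1}(ii). Fix $\psi \in \mathcal{T}_{\mathcal{Y}}$ and a maximal face $F$ of $B_{\mathbb{X}}$, and suppose $\mathcal{A}_{\psi} \cap int(F) = \emptyset$. The goal is then to produce a single nonzero $w \in \mathbb{X}$ and some $\epsilon \in [0,1)$ with $\mathcal{A}_{\psi} \perp_B^{\epsilon} w$, which contradicts strong anti-coproximinality via Lemma \ref{lemma:1}(ii). I would first exploit the one genuinely polyhedral fact that was unavailable in Theorem \ref{rotund}: in a finite-dimensional polyhedral space $int(F)$ is relatively open in $S_{\mathbb{X}}$, since near an interior point of a facet the sphere coincides with the supporting hyperplane of $F$. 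Consequently $\mathcal{A}_{\psi}$ lies in the compact set $S_{\mathbb{X}} \setminus int(F)$; this openness is also exactly what upgrades the closure statement of Theorem \ref{rotund}(ii) into a genuine intersection, because any accumulation of $\mathcal{A}_{\psi}$ inside the open set $int(F)$ would already force $\mathcal{A}_{\psi} \cap int(F) \neq \emptyset$.

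Next I would set up a uniform estimate. Let $p$ be the unique vertex of the polytope $B_{\mathbb{X}^*}$ exposing $F$, and fix $w \in int(F)$ with $\|w\| = 1$; interior points of a facet are smooth, so $J(w) = \{p\}$. Since $\mathbb{X}$ is polyhedral, $B_{\mathbb{X}}$ has finitely many faces and the duality map is constant on the relative interior of each face (equal to the conjugate face of $B_{\mathbb{X}^*}$), so $h(g) := \min_{g^* \in J(g)} |g^*(w)|$ assumes only finitely many values. A face-by-face computation, using that $|g^*(w)| = 1$ forces $g^* \in \{p, -p\}$ together with the fact that the segment $[p,-p]$ meets $S_{\mathbb{X}^*}$ only at its endpoints, shows that every $g \in S_{\mathbb{X}} \setminus (int(F) \cup int(-F))$ admits a supporting functional which is a vertex distinct from $\pm p$; hence $h(g) < 1$ there. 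By finiteness this gives a uniform $\delta < 1$ with $h(g) \le \delta$ off $int(F) \cup int(-F)$. Thus, provided $\mathcal{A}_{\psi}$ also avoids $int(-F)$, Lemma \ref{Chem} yields $\mathcal{A}_{\psi} \perp_B^{\epsilon} w$ for any $\epsilon \in (\delta, 1)$, completing the contradiction.

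The remaining, and I expect the hardest, point is the antipodal facet: a priori $\mathcal{A}_{\psi}$ may meet $int(-F)$, where $J(g) = \{-p\}$ and $h(g) = |p(w)| = 1$, so the constant direction $w$ fails. To absorb this I would pass from the constant direction to a scalar-modulated test function $g(k) = \alpha(k)\,w$ with $\alpha \in S_{C_0(K)}$, and apply the orthogonality criterion of Theorem \ref{approximate:continuous} directly in $C_0(K,\mathbb{X})$ (with $C$ the vertex set of $B_{\mathbb{X}^*}$) rather than Lemma \ref{lemma:1}. For each $f \in S_{\mathcal{Y}}$ the attainable values at the selected point $k_f = \psi(f)$ form the interval $\alpha(k_f)\,[\,a_f, b_f\,]$ whose distance to $0$ is $|\alpha(k_f)|\,h(f(k_f))$; on selected points whose image lies outside $int(-F)$ the uniform bound $\delta$ takes over, whereas on the points landing in $int(-F)$ one has $h = 1$ and it suffices to arrange $|\alpha| \le \epsilon$ while keeping $\|\alpha\|_\infty = 1$.

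The crux is therefore to verify that such a continuous $\alpha$ exists and that the intersection with $\mathcal{D}(\epsilon \|g\|)$ can be secured simultaneously for every $f$; here I would lean on the compactness of each norm-attainment set $M_f$ and on the freedom, afforded by Theorem \ref{approximate:continuous}, to replace $k_f$ by another point of $M_f$, again using polyhedral finiteness to keep the obstructed configurations under control. Establishing this coordinated choice of $\alpha$ — equivalently, showing that the antipodal interior cannot obstruct the whole family at once — is the step I expect to demand the most care, while the rest is the finite convex geometry packaged above.
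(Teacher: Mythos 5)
Your argument is solid up to and including the uniform estimate: the relative openness of $int(F)$ in $S_{\mathbb{X}}$, the finiteness of the face structure, and the bound $\delta<1$ for supporting functionals off $int(F)\cup int(-F)$ all check out, and they match the paper's choice $\epsilon=\max\{|y_i^*(u)|: i\neq j\}$ with $u\in int(F_j)$. But the proof stops exactly at the point you yourself flag: the ``coordinated choice of $\alpha$'' for functions whose norm attainment lands in $int(-F)$ is never established, and the specific fallback you propose --- arranging $|\alpha|\le\epsilon$ on the offending selected points while keeping $\|\alpha\|_\infty=1$ --- cannot work. The selected points $\psi(f)$, as $f$ ranges over $S_{\mathcal{Y}}$, are completely uncontrolled: a priori every $k\in K$ could arise as $\psi(f)$ for some $f$ with $f(\psi(f))\in int(-F)$, which would force $|\alpha|\le\epsilon$ on all of $K$ and contradict $\|\alpha\|_\infty=1$. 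So the antipodal case is a genuine hole in the proposal, not a deferred technicality.

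The missing idea is a symmetry trick combined with a sign-definite modulation, and it is short. Take any $\alpha\in S_{C_0(K)}$ with $\alpha\ge 0$ and set $g=\alpha\, u$. For $f\in S_{\mathcal{Y}}$ there are three cases. If some $k\in M_f$ has $f(k)\notin int(F)\cup int(-F)$, then $f(k)$ lies in a facet other than $\pm F$ (boundary points of $F$ belong to other facets, since $F\cap(-F)=\emptyset$), and your uniform bound gives an extreme supporting functional $y^*\in J(f(k))$ with $|y^*(g(k))|\le\epsilon\|g\|$. If $f(M_f)\subset int(F)\cup int(-F)$ with both pieces hit, the value set of Theorem \ref{approximate:continuous} contains both $\alpha(k_1)\ge 0$ and $-\alpha(k_2)\le 0$, so its convex hull contains $0$. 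Finally, $f(M_f)\subset int(-F)$ is impossible: then $-f\in S_{\mathcal{Y}}$ has $(-f)(\psi(-f))\in int(F)\cap\mathcal{A}_{\psi}$ because $\psi(-f)\in M_{-f}=M_f$, contradicting the standing assumption $\mathcal{A}_{\psi}\cap int(F)=\emptyset$ (and $f(M_f)\subset int(F)$ contradicts it directly at $k_f$). Hence $f\perp_B^\epsilon g$ for every $f$, so $\mathcal{Y}\perp_B^\epsilon g$, the desired contradiction. It is worth noting that the paper's own proof follows your second route (test function $\alpha u$, the same $\epsilon$, the orthogonality criterion) but evaluates only at the selected point $k_f$ and asserts $x_f\in F_t$ for some $t\neq j$, silently passing over the possibility $x_f\in int(-F_j)$ --- so you correctly isolated a subtlety that the published argument glosses over; you just did not supply the step that resolves it.
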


\begin{proof}
Let $\pm F_1, \pm F_2, \ldots, \pm F_r$ are the maximal faces of $B_{\mathbb{X}}$ and let $y_i^* \in Ext(B_{\mathbb{X}^*})$ corresponds to the face $F_i$ (see \cite[Lemma 2.1]{SPBB}).  Suppose on the contrary that there exists $\psi \in \mathcal{T}_{\mathcal{Y}}$ such that  $\mathcal{A}_{\psi} $ does not  intersect interior of a maximal face $F_j$ of $B_{\mathbb{X}}.$  Let $u \in int(F)$ and $\alpha \in S_{C_0(K)}.$ Observe that $  J(u)=\{y_j^*\}.$ Define $g: K \to \mathbb{X}$ such that $g(k)= \alpha(k) u.$ So, $ g \in S_{C_0(K, \mathbb{X})}.$    Let $$\epsilon = \max \{| y_i^*(u)|: i \in \{1, 2, \ldots, r\} \setminus \{j\}  \}. $$
Clearly $\epsilon < 1.$
We claim that $\mathcal{Y} \perp_B^\epsilon g.$ Let $f \in \mathcal{Y}$ and let $\psi(f)=k_f$. Suppose that $f(k_f) = x_f \in S_{\mathbb{X}}.$ Clearly, $x_f \in \mathcal{A}_{\psi}.$ As $\mathcal{A}_\psi \cap int(F_j) = \emptyset,$ there exists $t \in \{1, 2, \ldots, r\} \setminus \{j\}$ such that $x_f \in F_t.$ Therefore, $ y_t^* \in J(x_f) \cap \{Ext(B_{\mathbb{X}^*}) \setminus \{ y_j^*\}\}.$ As $|y_t^*(u)| = |y_t^*(g(k_f))| \leq \epsilon, $
\[
y_t^*(u) \in co \bigg( \bigg\{ y^*(g(k)) : k \in K, y^* \in V, y^*(f(k)) = \|f\| \bigg\} \bigg) \cap \mathcal{D}(\epsilon).
\]
Following Theorem \ref{martin:continuous}, we get $f \perp_B^\epsilon g.$ Consequently, $\mathcal{Y} \perp_B^\epsilon g.$ This contradicts that $\mathcal{Y}$ is strongly anti-coproximinal in $C_0(K, \mathbb{X}).$

(ii) $\implies (i): $
\end{proof}

We illustrate the above theorem with the following example.
\begin{example}
Let $K=[0, 1]$ and $\mathbb{X}=\ell_1^3.$ Consider the functions $f_1, f_2: [0, 1]\to \ell_1^3$ defined  as $f_1(t)=(t+1, t, 0)$ and $f_2(t)=(t^2, 0, 0).$ Clearly, $f_1, f_2$ are linearly independent. Let $\mathcal{Y}=span \{f_1, f_2\}.$ For any $\psi \in \mathcal{T}_\mathcal{Y}$, $\mathcal{A}_{\psi} \subset span\{(x, y, 0): x, y \in \mathbb R\}\subset \ell_1^3.$ Therefore, $\mathcal{A}_{\psi}$ does not intersects interior of any maximal face of $B_{\ell_1^3}.$ Following Theorem \ref{prop:polyhedral}, $\mathcal{Y}$ is not strongly anti-coproximinal in $C([0,1], \ell_1^3).$
\end{example}

Next we present a necessary condition for anti-coproximinal subspaces of $C_0(K, \mathbb{X}).$ 

\begin{prop}\label{prop:anti}
Let $K$ be a locally compact normal space and $\mathcal{Y}$ be an anti-coproximinal subspace of $C_0(K, \mathbb{X}).$ Then for any nonempty open set $U \subset K$ there exists $f \in \mathcal{Y}$ such that $M_f \subset U.$
\end{prop}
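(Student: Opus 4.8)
The plan is to argue by contraposition. Suppose the conclusion fails, so that there is a nonempty open set $U \subseteq K$ with $M_f \not\subseteq U$ for every $f \in \mathcal{Y}$. Since every nonzero $f \in C_0(K,\mathbb{X})$ attains its norm (its set $M_f$ is nonempty), this says $M_f \cap (K \setminus U) \neq \emptyset$ for each nonzero $f \in \mathcal{Y}$. I will then produce a single nonzero $g \in C_0(K,\mathbb{X})$ satisfying $\mathcal{Y} \perp_B g$, which contradicts anti-coproximinality as in Definition \ref{epsilon}(i).

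The core of the construction is a bump function supported inside $U$. Fix $p \in U$ and $x_0 \in S_{\mathbb{X}}$. Using local compactness together with normality (Urysohn's lemma), I obtain a scalar function $\phi : K \to [0,1]$ with $\phi(p)=1$ and $\phi \equiv 0$ off some open set $V$ with $\overline{V}$ compact and $\overline{V} \subseteq U$; in particular $\phi$ has compact support, so $\phi \in C_0(K)$, and $\phi$ vanishes on $K \setminus U$. Set $g(k) = \phi(k)\,x_0$. Then $g \in C_0(K,\mathbb{X})$, $g(p) = x_0 \neq 0$ so $g \neq 0$, and crucially $g(k) = 0$ for every $k \notin U$.

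It remains to verify $f \perp_B g$ for every $f \in \mathcal{Y}$ via Theorem \ref{martin:continuous}, taking $C = S_{\mathbb{X}^*}$ (which satisfies $B_{\mathbb{X}^*} = \overline{co(S_{\mathbb{X}^*})}^{w^*}$ by Krein--Milman). The case $f = 0$ is trivial, since $0 \perp_B g$ always. For nonzero $f$, choose $k_0 \in M_f \cap (K \setminus U)$; then $\|f(k_0)\| = \|f\|$, and by Hahn--Banach there is $y_0^* \in S_{\mathbb{X}^*}$ with $y_0^*(f(k_0)) = \|f(k_0)\| = \|f\|$. Because $k_0 \notin U$ we have $g(k_0) = 0$, hence $y_0^*(g(k_0)) = 0$. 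Thus $0$ belongs to the set $\{\, y^*(g(k)) : k \in K,\ y^* \in C,\ y^*(f(k)) = \|f\| \,\}$, so a fortiori $0$ lies in its convex hull, and Theorem \ref{martin:continuous} yields $f \perp_B g$. As $f$ was arbitrary, $\mathcal{Y} \perp_B g$, giving the desired contradiction.

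The only genuinely technical point is the topological construction of $\phi$: one must arrange that the bump has \emph{compact} support (so that $g \in C_0(K)$, i.e.\ vanishes at infinity) while simultaneously being supported \emph{inside} $U$. This is exactly where both hypotheses enter --- local compactness supplies a relatively compact neighborhood of $p$ sitting inside $U$, and normality lets one invoke Urysohn's lemma to separate a smaller closed neighborhood of $p$ from the complement of $V$. Everything after that is a direct reading of Theorem \ref{martin:continuous}: the presence of a norm-attainment point of $f$ lying outside the support of $g$ forces $0$ into the relevant functional set for free, so no quantitative estimate is needed.
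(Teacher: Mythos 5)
Your proof is correct and takes essentially the same route as the paper: assuming some nonempty open $U$ satisfies $M_f \cap (K \setminus U) \neq \emptyset$ for all $f \in \mathcal{Y}$, both arguments build $g(k) = \phi(k)x_0$ with $\phi$ a Urysohn bump vanishing off $U$, pick a norm-attainment point $k_0 \in M_f \setminus U$ with a supporting functional $y_0^* \in J(f(k_0))$, note $y_0^*(g(k_0)) = 0$, and invoke Theorem \ref{martin:continuous} to get $\mathcal{Y} \perp_B g$, contradicting anti-coproximinality. If anything, your version is slightly more careful than the paper's: you explicitly use local compactness to force $\phi$ to have compact support so that $g$ genuinely lies in $C_0(K,\mathbb{X})$ when $K$ is non-compact, a point the paper's appeal to Urysohn's lemma alone glosses over.
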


\begin{proof}
Suppose on the contrary there exists a nonempty open set $U \subset K$ such that for any $f \in \mathcal{Y},$ $ M_f \cap (K \setminus U) \neq \emptyset$. Let $k_0 \in U$ and $x \in S_{\mathbb{X}}.$ Following Uryshonn's Lemma \cite{Munkres} there exists a continuous function $\alpha: K \to [0,1]$ such that $\alpha(k_0)=1, \alpha(k)=0 ~ \forall k \in K \setminus U.$ Let $g : K \to \mathbb{X}$ such that $g(k)= \alpha(k)x.$  Clearly $ g(k_0)= x$ and $g(k)=0 ~ \forall k \in K \setminus U.$   We claim  that $\mathcal{Y} \perp_B g.$ Let $f \in \mathcal{Y}$ and $k_f \in M_f \cap (K \setminus U).$ Observe that $g(k_f)=0.$ Therefore, 
\[ 0 \in co\bigg(\bigg\{  y^*(g(k)): k \in K, y^* \in S_{\mathbb{X}^*},   y^*(f(k))= \|f\|\bigg\}\bigg).\]
From Theorem \ref{martin:continuous}, $f \perp_B g$ and consequently  $\mathcal{Y} \perp_B g.$ This contradicts  that $\mathcal{Y}$ is  anti-coproximinal in $C_0(K, \mathbb{X}).$
\end{proof}

Now we provide a sufficient condition for strongly anti-coproximinal subspace of $C_0(K, \mathbb{X}).$

\begin{theorem}\label{sufficient; strong}
Let $K$ be a locally compact Hausdorff space.  Suppose that for any nonempty open set $U $of $ K$ and  for any nonempty  weak*-open set $V $ of $\mathbb{X}^*$ containing an extreme point of $B_{\mathbb{X}^*}$, there exists $f \in \mathcal{Y}$ such that $M_f \subset U$ and $J(f(k))\subset V ~\forall k \in M_f.$ Then $\mathcal{Y}$ is strongly anti-coproximinal in $C_0(K, \mathbb{X}).$
\end{theorem}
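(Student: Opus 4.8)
The plan is to verify strong anti-coproximinality directly from Definition \ref{epsilon}(ii): I must show that for every nonzero $g \in C_0(K,\mathbb{X})$ and every $\epsilon \in [0,1)$ one has $\mathcal{Y} \not\perp_B^\epsilon g$. Since $\epsilon$-Birkhoff--James orthogonality to $g$ is unchanged when $g$ is replaced by a nonzero scalar multiple (absorb the scalar into $\lambda$ in the defining inequality), I may normalise $\|g\|=1$. I would then apply Theorem \ref{approximate:continuous} with $C = Ext(B_{\mathbb{X}^*})$, which is admissible because $B_{\mathbb{X}^*}=\overline{co(Ext(B_{\mathbb{X}^*}))}^{w^*}$ by the Krein--Milman theorem. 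Thus it suffices to produce a single $f \in \mathcal{Y}$ for which
\[
co\big(\{\, y^*(g(k)) : k \in K,\ y^* \in Ext(B_{\mathbb{X}^*}),\ y^*(f(k))=\|f\| \,\}\big) \cap \mathcal{D}(\epsilon) = \emptyset,
\]
since this yields $f \not\perp_B^\epsilon g$ and hence $\mathcal{Y} \not\perp_B^\epsilon g$.

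The geometric idea I would exploit is that $\mathcal{D}(\epsilon) \subset \{z \in \mathbb{K} : \Re[z] \le \epsilon\}$; consequently it is enough to arrange $\Re[y^*(g(k))] > \epsilon$ for every pair $(k,y^*)$ contributing to the set above, because the real part is affine and so the strict inequality transfers to the convex hull, forcing disjointness from $\mathcal{D}(\epsilon)$. To locate a good region, I first choose $k_0 \in M_g$, so $\|g(k_0)\|=1$. The map $x^* \mapsto \Re[x^*(g(k_0))]$ is weak*-continuous and linear, so its supremum over $\overline{co(Ext(B_{\mathbb{X}^*}))}^{w^*} = B_{\mathbb{X}^*}$ equals its supremum over $Ext(B_{\mathbb{X}^*})$, and this common value is $\|g(k_0)\| = 1$. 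Fixing any $\delta$ with $\epsilon < \delta < 1$, I obtain an extreme point $x_0^* \in Ext(B_{\mathbb{X}^*})$ with $\Re[x_0^*(g(k_0))] > \delta$.

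Next I would use the joint continuity of $(k,y^*) \mapsto y^*(g(k))$ at $(k_0,x_0^*)$ for the product of the topology of $K$ with the weak* topology: from
\[
|y^*(g(k)) - x_0^*(g(k_0))| \le \|g(k) - g(k_0)\| + |(y^* - x_0^*)(g(k_0))|,
\]
the first term is controlled by continuity of $g$ at $k_0$ and the second by weak* proximity of $y^*$ to $x_0^*$ (as $g(k_0)$ is a fixed vector). Taking $\eta = (\delta-\epsilon)/2 > 0$, I get an open neighbourhood $U$ of $k_0$ in $K$ and a weak*-open set $V \subset \mathbb{X}^*$ containing $x_0^*$ with $\Re[y^*(g(k))] > \delta - \eta > \epsilon$ for all $k \in U$ and $y^* \in V \cap B_{\mathbb{X}^*}$. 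Since $V$ is a weak*-open set containing the extreme point $x_0^*$, the hypothesis applies to the pair $(U,V)$ and supplies $f \in \mathcal{Y}$ with $M_f \subset U$ and $J(f(k)) \subset V$ for every $k \in M_f$.

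Finally I would check that this $f$ works. If $k \in K$ and $y^* \in Ext(B_{\mathbb{X}^*})$ satisfy $y^*(f(k)) = \|f\|$, then $\|f(k)\| \ge |y^*(f(k))| = \|f\|$ forces $k \in M_f \subset U$, while $y^*(f(k)) = \|f\| = \|f(k)\|$ with $\|y^*\|=1$ gives $y^* \in J(f(k)) \subset V$. Hence every contributing pair lies in $U \times V$, so $\Re[y^*(g(k))] > \epsilon$ for each contributing value; the convex hull in the display therefore lies in $\{\Re[z] > \epsilon\}$ and is disjoint from $\mathcal{D}(\epsilon)$, as required. I expect the main technical point to be the joint-continuity step together with the precise matching of $U$ and $V$ to the hypothesis; the extreme-point selection via Krein--Milman and the transfer of the real-part inequality to the convex hull are comparatively routine.
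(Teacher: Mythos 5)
Your proof is correct, and at the strategic level it runs parallel to the paper's: both arguments hinge on Theorem \ref{approximate:continuous} with $C = Ext(B_{\mathbb{X}^*})$ (admissible by Banach--Alaoglu and Krein--Milman), manufacture a pair $(U,V)$ on which $\Re[y^*(g(k))] > \epsilon$ holds for every contributing pair, and then feed $(U,V)$ into the hypothesis to obtain the witness $f$; your choice to argue directly for each $g$ and $\epsilon$ rather than by contradiction is cosmetic. Two points of execution differ, and both favour your version. First, the paper selects an exact norming extreme functional $z^* \in J(g(k')) \cap Ext(B_{\mathbb{X}^*})$, whose existence requires the (unstated) fact that the weak*-compact face $J(g(k'))$ contains extreme points of $B_{\mathbb{X}^*}$; you sidestep this by taking an extreme point that only $\delta$-almost norms $g(k_0)$, which follows at once from $\sup\{\Re[x^*(g(k_0))] : x^* \in Ext(B_{\mathbb{X}^*})\} = \|g(k_0)\| = 1$. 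Second, and more substantively, the paper sets $V = \bigcup_{u \in U} V_u$ with each $V_u$ depending on $g(u)$, and then asserts $\Re[y^*(g(k))] > \epsilon$ for all $y^* \in V$ and all $k \in U$; as written, its displayed computation controls $\Re[z^*(g(k)) - y^*(g(k))]$ only when $y^* \in V_k$ for that same $k$, so the uniform claim does not actually follow from that construction of $V$. Your construction --- a single basic weak*-neighbourhood of $x_0^*$ determined by the one vector $g(k_0)$, paired with a shrunken $U$ on which $g$ is norm-close to $g(k_0)$ --- delivers the estimate genuinely uniformly over $U \times (V \cap B_{\mathbb{X}^*})$, and restricting to the ball is harmless since all contributing functionals lie in $Ext(B_{\mathbb{X}^*}) \subset B_{\mathbb{X}^*}$. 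So your argument is not merely a rearrangement of the paper's; it also repairs this lacuna.
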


\begin{proof}
Suppose on the contrary that $\mathcal{Y}$ is not strongly anti-coproximinal in $C_0(K, \mathbb{X})$. In other words, there exists $\epsilon \in [0,1), g \in S_{C_0(K, \mathbb{X})}$ such that $\mathcal{Y} \perp_B^\epsilon g.$ As $M_g$ is nonempty, suppose $\|g(k')\|=\|g\|=1,$ for some $k'\in K.$ Let $z^* \in J(g(k')) \cap Ext(B_{\mathbb{X}^*}).$ Since $z^*$ and $g$ are continuous,  the set $U= \{u\in K: \Re[z^* (g(u))]>\frac{1+\epsilon}{2}\}$ is open in $K.$  Clearly $U$ is nonempty, as $k' \in U.$ Now for each $u\in U,$ consider the set with respect to $z^*$ as 
$$V_{u}=\bigg\{y^* \in \mathbb{X}^*: |y^*(g(u))- z^*(g(u)) | <\frac{1-\epsilon}{2}\bigg\}.$$
Clearly, it is a weak*-open set containing $z^*.$ Therefore, the set $V=\bigcup_{u\in U} V_u$ is also a weak*-open in $\mathbb{X}^*$ containing $z^*.$ 
Suppose that $y^*\in V$ and $k\in U$ are chosen arbitrarily. Then 
\begin{eqnarray*}
	\Re [y^*(g(k))] &=& \Re [z^*(g(k))]- \Re [z^*(g(k))] + \Re [y^*(g(k))]\\
	&=&   \Re [z^*(g(k))]-  \Re [z^*(g(k)) -y^*(g(k))]\\
	&>& \frac{1+\epsilon}{2}-\frac{1-\epsilon}{2}=\epsilon.
\end{eqnarray*}
This shows that for any $y^* \in V$ and any $k \in U,$ we have $\Re [y^*(g(k))]>\epsilon.$ Now from hypothesis there exists an $f\in \mathcal{Y}$ such that $M_f\subset U$ and $J(f(k)) \subset V,$ for all $k\in M_f.$ 
Since $f\perp_B^{\epsilon} g,$ it follows that
$$\mathcal{D}(\epsilon)\cap co \bigg(\bigg\{y^*(g(k)): k\in K, y^*\in S_{\mathbb{X}^*}, y^*(f(k))=\|f\|\bigg\} \bigg) \neq \emptyset.$$ 
As $M_f \subset U$ and $J(f(k)) \subset V,$ 
$$\mathcal{D}(\epsilon)\cap co \bigg( \bigg\{y^*(g(k)): k\in U, y^*\in V, y^*(f(k))=\|f\|\bigg\} \bigg) \neq \emptyset.$$ 
This implies  there exist $k_i \in M_f \subset U, x_i^* \in V$ and $t_i > 0$ for $i \in \{1,2,3\}$ such that 
$\sum_{i=1}^3 t_i =1$, $x_i^* (f(k_i))= \|f\|$ and $| \sum_{i=1}^3 t_i x_i^*(g(k_i))| \leq  \epsilon$. Therefore, $$ \sum_{i=1}^3 t_i \Re[ x_i^*(g(k_i)) ] \leq \epsilon.$$ But for any $u \in U, y^* \in V, \Re[y^*(g(u))] > \epsilon,$ which is a contradiction.  Thus we get our desired result. 
\end{proof}

In the following theorem, we  obtain a characterization of the strongly anti-coproximinal subspaces of $C_0(K, \mathbb{X})$ under the condition that $B_{\mathbb{X}^*}= \overline{co(w^* \mymathhyphen st \mymathhyphen Exp(B_{\mathbb{X}^*}))}^{w^*}.$ To do so we need the following lemma, which can be obtained directly from \cite[Lemma 2.15]{KMMPQ}.

\begin{lemma}\label{lemma:kadet}
Let $\mathbb{X}$ be a Banach space such that $B_{\mathbb{X}^*} = \overline{co(w^* \mymathhyphen st \mymathhyphen Exp(B_{\mathbb{X}^*}))}^{w^*}.$ Let  $ x \in S_{\mathbb{X}}$. Then, for every $x_0^* \in J(x)$ and every $ \delta > 0,$ there exists $ x^* \in w^* \mymathhyphen st \mymathhyphen Exp(B_{\mathbb{X}^*})$ such that  $ \Re[x^*(x)] > 1 - \delta.$
\end{lemma}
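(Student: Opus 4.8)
The plan is to prove the slightly stronger and cleaner statement that
\[
\sup_{y^* \in A} \Re[y^*(x)] = 1, \qquad A := w^* \mymathhyphen st \mymathhyphen Exp(B_{\mathbb{X}^*}),
\]
from which the conclusion is immediate: for each $\delta > 0$ the definition of the supremum furnishes some $x^* \in A$ with $\Re[x^*(x)] > 1 - \delta$. I want to emphasize at the outset that the only structural input needed is the hypothesis $B_{\mathbb{X}^*} = \overline{co(A)}^{w^*}$; the specific nature of $A$ (being the set of weak*-strongly exposed points rather than, say, extreme or exposed points) never enters the argument. The role of the prescribed $x_0^* \in J(x)$ is likewise purely auxiliary: it only records the fact that $\|x\|$ is attained on $B_{\mathbb{X}^*}$, and since $x \in S_{\mathbb{X}}$ this value is $1$.

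First I would introduce the evaluation map $\phi : \mathbb{X}^* \to \mathbb{R}$ defined by $\phi(y^*) = \Re[y^*(x)]$. Because $x \in \mathbb{X}$, the functional $y^* \mapsto y^*(x)$ is weak*-continuous, and composing with $\Re$ keeps $\phi$ weak*-continuous; moreover $\phi$ is real-linear. The decisive observation is then that for any subset $S \subset \mathbb{X}^*$ the quantity $\sup_{y^* \in S} \phi(y^*)$ is unchanged when $S$ is replaced by $co(S)$ (by real-linearity together with convexity of the constraint) and again when $co(S)$ is replaced by its weak*-closure $\overline{co(S)}^{w^*}$ (by weak*-continuity of $\phi$). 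Applying this with $S = A$ and invoking the hypothesis gives
\[
\sup_{y^* \in A} \Re[y^*(x)] = \sup_{y^* \in \overline{co(A)}^{w^*}} \Re[y^*(x)] = \sup_{y^* \in B_{\mathbb{X}^*}} \Re[y^*(x)] = \|x\| = 1,
\]
where the last equality before $\|x\|$ is the standard Hahn--Banach identity for the norm.

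The single point requiring genuine care is the transfer of an upper bound on $\phi$ from $A$ to $\overline{co(A)}^{w^*}$. Here I would argue that the sublevel sets $\{ y^* : \phi(y^*) \le c \}$ are convex (since $\phi$ is real-linear) and weak*-closed (since $\phi$ is weak*-continuous), so any bound valid on $A$ passes automatically to its weak*-closed convex hull; concretely, one approximates a point of $\overline{co(A)}^{w^*}$ by a weak*-convergent net from $co(A)$ and uses continuity of $\phi$ along the net. I do not anticipate a serious obstacle, as the whole proof reduces to the elementary fact that a weak*-continuous linear functional attains the same supremum on a set and on its weak*-closed convex hull; the remainder is bookkeeping, and the claimed reference \cite[Lemma 2.15]{KMMPQ} should supply exactly this in its general form.
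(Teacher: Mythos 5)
Your proof is correct. A point of comparison: the paper does not actually prove this lemma internally; it states that the lemma ``can be obtained directly from \cite[Lemma 2.15]{KMMPQ}'' and moves on. Your argument --- that the weak*-continuous, real-linear functional $\phi(y^*) = \Re[y^*(x)]$ has the same supremum on $A = w^* \mymathhyphen st \mymathhyphen Exp(B_{\mathbb{X}^*})$, on $co(A)$, and on $\overline{co(A)}^{w^*}$, combined with the hypothesis $\overline{co(A)}^{w^*} = B_{\mathbb{X}^*}$ and the Hahn--Banach identity $\sup_{y^* \in B_{\mathbb{X}^*}} \Re[y^*(x)] = \|x\| = 1$ --- is precisely the standard proof of the cited fact, so in substance you have reconstructed, self-contained, what the paper outsources to a reference. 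Your two structural remarks are also accurate and worth keeping: the designated $x_0^* \in J(x)$ is superfluous (since $x \in S_{\mathbb{X}}$ already gives the supremum $1$ over $B_{\mathbb{X}^*}$ without any norm-attainment assumption), and nothing about weak*-strongly exposed points is used beyond the convex-hull hypothesis --- which, incidentally, also forces $A \neq \emptyset$, so the existence claim is not vacuous. The one step you flagged as delicate, passing the upper bound from $A$ to $\overline{co(A)}^{w^*}$, is handled exactly as you say: the sublevel sets $\{y^* : \Re[y^*(x)] \leq c\}$ are convex and weak*-closed, so a bound on $A$ persists on its weak*-closed convex hull. No gaps.
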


\begin{theorem}\label{theorem:C(K,X)}
Let $K$ be a locally compact normal space. Let $\mathbb{X}$ be a Banach space such that $B_{\mathbb{X}^*}= \overline{co(w^* \mymathhyphen st \mymathhyphen Exp(B_{\mathbb{X}^*}))}^{w^*}.$ Then the following are equivalent:
\begin{itemize}
	\item [(i)] A subspace $\mathcal{Y}$ is strongly anti-coproximinal in $C_0(K, \mathbb{X}).$
	\item[(ii)] For any nonempty open set $U \subset K$ and for any  nonempty weak*-open set $ V \subset S_{\mathbb{X}^*}$ containing a weak*-strongly exposed point,  there exists $f \in \mathcal{Y}$ such that $M_f \subset U$ and $J(f(k)) \subset V ~\forall k \in M_f.$
\end{itemize}
\end{theorem}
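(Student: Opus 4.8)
The plan is to prove the two implications separately, using Theorem \ref{approximate:continuous} (with the norming set $C=S_{\mathbb{X}^*}$, so that the relevant functionals are exactly the sets $J(f(k))$) to convert every $\epsilon$-orthogonality assertion into a statement about a convex hull of scalars meeting the disc $\mathcal{D}(\epsilon)$, and reserving Lemma \ref{lemma:kadet} for the direction $(ii)\Rightarrow(i)$.

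For $(ii)\Rightarrow(i)$ I would argue by contradiction. Suppose $\mathcal{Y}\perp_B^\epsilon g$ for some $\epsilon\in[0,1)$ and some $g\in S_{C_0(K,\mathbb{X})}$, and fix $k'\in M_g$. Put $\delta=(1-\epsilon)/4$. Applying Lemma \ref{lemma:kadet} to $g(k')\in S_{\mathbb{X}}$ (and any member of $J(g(k'))$) yields a point $z^*\in w^*\mymathhyphen st\mymathhyphen Exp(B_{\mathbb{X}^*})$ with $\Re[z^*(g(k'))]>1-\delta$. I would then take $U=\{k\in K:\|g(k)-g(k')\|<\delta\}$, which is open and contains $k'$ by continuity of $g$, and $V=\{y^*\in S_{\mathbb{X}^*}:|y^*(g(k'))-z^*(g(k'))|<\delta\}$, a weak*-open set containing the weak*-strongly exposed point $z^*$. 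A one-line estimate then gives $\Re[y^*(g(k))]>1-3\delta>\epsilon$ for all $k\in U$ and $y^*\in V$. Hypothesis $(ii)$ now supplies $f\in\mathcal{Y}$ with $M_f\subset U$ and $J(f(k))\subset V$ for every $k\in M_f$; consequently each scalar $y^*(g(k))$ arising in Theorem \ref{approximate:continuous} has real part exceeding $\epsilon$, so the whole convex hull avoids $\mathcal{D}(\epsilon)$ and hence $f\not\perp_B^\epsilon g$, contradicting $\mathcal{Y}\perp_B^\epsilon g$.

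For $(i)\Rightarrow(ii)$ I would also argue by contradiction, assuming $(ii)$ fails for some nonempty open $U\subset K$ and some weak*-open $V$ containing a weak*-strongly exposed point $z^*$, say exposed by $x_0\in S_{\mathbb{X}}$. Picking $k_0\in U$ and a Urysohn function $\alpha:K\to[0,1]$ with $\alpha(k_0)=1$ and $\alpha\equiv0$ off $U$, I set $g=\alpha x_0\in S_{C_0(K,\mathbb{X})}$. Since $z^*$ is weak*-strongly exposed by $x_0$, the slices $\{y^*\in B_{\mathbb{X}^*}:\Re[y^*(x_0)]>1-\eta\}$ shrink to $z^*$, so I may fix $\eta_0>0$ with this slice inside $V$ and then $\epsilon\in(1-\eta_0,1)$. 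By strong anti-coproximinality $\mathcal{Y}\not\perp_B^\epsilon g$, so some $f\in S_{\mathcal{Y}}$ satisfies $f\not\perp_B^\epsilon g$; by Theorem \ref{approximate:continuous} the convex hull of $\{\alpha(k)y^*(x_0):k\in M_f,\ y^*\in J(f(k))\}$ is disjoint from $\mathcal{D}(\epsilon)$. This disjointness forces $M_f\subset U$ (a norm-attaining point outside $U$ would contribute the value $0$), and separating the convex hull from the disc $\mathcal{D}(\epsilon)$ produces a unimodular scalar $e^{i\theta}$ with $\Re[e^{-i\theta}y^*(x_0)]\geq\epsilon$ for all $y^*\in J(f(k))$, $k\in M_f$. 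The rotated function $\tilde f=e^{i\theta}f\in\mathcal{Y}$ then has $M_{\tilde f}=M_f\subset U$ and $J(\tilde f(k))=e^{-i\theta}J(f(k))\subset\{w^*:\Re[w^*(x_0)]>1-\eta_0\}\subset V$, contradicting the failure of $(ii)$.

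The step I expect to be the main obstacle is reconciling the two languages in play: weak*-strong exposition is expressed through weak*-neighbourhoods and slices, while $\epsilon$-orthogonality is expressed, via Theorem \ref{approximate:continuous}, through convex hulls of scalars avoiding a disc. Two points deserve care. First, I must promote the sequential definition of a weak*-strongly exposed point to the slice characterisation ``every weak*-neighbourhood of $z^*$ contains a slice $\{\Re[y^*(x_0)]>1-\eta\}$''; this follows since functionals drawn from ever-thinner slices satisfy $y_n^*(x_0)\to z^*(x_0)$ (as $\Re[y_n^*(x_0)]\to1$ and $|y_n^*(x_0)|\leq1$) and hence converge weak* to $z^*$. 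Second, in $(i)\Rightarrow(ii)$ the passage from ``the convex hull avoids $\mathcal{D}(\epsilon)$'' to ``all supporting functionals lie in one slice'' rests on a separation in the plane followed by absorbing the resulting phase through the rotation $f\mapsto e^{i\theta}f$; verifying that this single argument covers both the real case (where $e^{i\theta}=\pm1$) and the complex case, and that $\tilde f$ remains in $\mathcal{Y}$, is where the bookkeeping is concentrated.
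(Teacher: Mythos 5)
Your proof is correct, and while your $(ii)\Rightarrow(i)$ direction follows the paper's own route --- Lemma \ref{lemma:kadet} to produce a weak*-strongly exposed $z^*$ with $\Re[z^*(g(k'))]$ close to $1$, then Theorem \ref{approximate:continuous} to contradict $f\perp_B^\epsilon g$ --- your $(i)\Rightarrow(ii)$ direction is genuinely different. The paper negates (ii) into two separate cases: (a) a single open $U$ with $M_f\not\subset U$ for \emph{every} $f\in\mathcal{Y}$, handled as in Proposition \ref{prop:anti}; and (b) a single weak*-open $V_0$ containing a weak*-strongly exposed $v_0^*$ with $J(f(k_f))\not\subset V_0$ for \emph{every} $f$, handled by setting $\epsilon=\sup\{x^*(v):x^*\in S_{\mathbb{X}^*}\setminus V_0\}<1$ and exhibiting $g=\beta v$ with $\mathcal{Y}\perp_B^\epsilon g$. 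You instead keep the witnesses $(U,V)$ together, use strong anti-coproximinality to extract a single $f$ with $f\not\perp_B^\epsilon g$ for $g=\alpha x_0$, and convert $f$ into a witness for (ii) via planar separation of the convex hull from $\mathcal{D}(\epsilon)$ followed by the rotation $f\mapsto e^{i\theta}f$; your contradiction thus runs in the opposite direction. This is logically tighter: the paper's disjunction (a)$\vee$(b) is strictly stronger than the literal negation of (ii), since different $f$'s may fail for different reasons (the paper's two constructions do merge into one $g$ covering both failure modes, so its gap is cosmetic, but your single uniform argument avoids the issue entirely, at the price of the separation-plus-rotation bookkeeping, which you correctly check in both the real and complex cases and which keeps $\widetilde f\in\mathcal{Y}$). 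Your slice characterization of weak*-strong exposedness is legitimately derived from the paper's sequential definition, exactly as the paper itself uses it. A further dividend of your version: in $(ii)\Rightarrow(i)$ your neighbourhoods $U=\{k:\|g(k)-g(k')\|<\delta\}$ and $V=\{y^*:|y^*(g(k'))-z^*(g(k'))|<\delta\}$, both anchored at the single point $k'$, make the uniform estimate $\Re[y^*(g(k))]>\epsilon$ for all $(k,y^*)\in U\times V$ transparent, whereas the paper (following Theorem \ref{sufficient; strong}) takes $V=\bigcup_{u\in U}V_u$, so a given $y^*\in V$ only satisfies $|y^*(g(u))-z^*(g(u))|<\frac{1-\epsilon}{2}$ for \emph{some} $u$ depending on $y^*$, and its claimed inequality for arbitrary pairs $(k,y^*)$ does not follow as written; your anchored choice quietly repairs this blemish as well.
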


\begin{proof}
(i) $\implies$ (ii):    Suppose on the contrary that atleast one of the following is true:
\begin{itemize}
	\item [(a)] there exists a nonempty open set $U \subset K$ such that  for any $f \in \mathcal{Y},$ $ M_f \cap (K \setminus U) \neq \emptyset$
	
	\item[(b)] there exists a nonempty weak*-open set  $V_0 \subset S_{\mathbb{X}^*}$ containing a weak*-strongly exposed point $v_0^*$ such that for any $f \in \mathcal{Y},$  $J(f(k_f)) \cap (S_{\mathbb{X}^*} \setminus V_0 ) \neq \emptyset,$ for some $k_f \in M_f.$
\end{itemize}

Let us first assume (a) holds true.  Then proceeding similarly  as in Proposition \ref{prop:anti} we get a contradiction.

Now we assume (b) holds true.
As $v_0^*$ is a weak*-strongly exposed point of $B_{\mathbb{X}^*}, $ there exists $v \in S_{\mathbb{X}}$ such that $v_n^*(v) \longrightarrow v_0^*(v) \implies v_n^* \xrightarrow{\enskip w^* \enskip}  v_0^*,$ for any sequence $\{v_n^*\} \subset S_{\mathbb{X}^*}.$ Let 
$$\sup \bigg\{ x^*( v): x^* \in S_{\mathbb{X}^*} \setminus V_0   \bigg\}= \epsilon.$$ 
We claim that $\epsilon < 1.$ If $\epsilon=1,$ then there exists a sequence $\{x_n^*\} \subset S_{\mathbb{X}^*} \setminus V_0$ such that $x_n^*(v) \longrightarrow 1 = v_0^*(v).$  Therefore, $x_n^* \xrightarrow{\enskip w^* \enskip}  v_0^* \in V_0,$ a contradiction as  $V_0$ is  weak*-open set in $S_{\mathbb{X}^*}.$ 
Let $k_0 \in K$ and $U$ be an open set such that $k_0 \in U \subset K.$
Following Uryshonn's Lemma \cite{Munkres} there exists $\beta: K \to [0,1]$ such that $\beta(k_0) =1$ and $\beta(k)=0~ \forall k \in K \setminus U.$ Let $g: K \to \mathbb{X}$ such that $g(k) = \beta (k) v,~\forall k \in K.$ Clearly, $g \in C_0(K, \mathbb{X}).$ We show that $\mathcal{Y} \perp_B^\epsilon g.$ Let $f \in \mathcal{Y}$. Suppose  $ k_f \in M_f $ and $y_f^* \in J(f(k_f)) \cap (S_{\mathbb{X}^*} \setminus V_0).$ Clearly, $y_f^*(v) \leq \epsilon,$ and so $y_f^*(g(k_f)) \leq \epsilon.$ Therefore, 
\[ \mathcal{D}(\epsilon) \cap co\bigg(\bigg\{  y^*(g(k)): k \in K, y^* \in C,   y^*(f(k))= \|f\|\bigg\}\bigg) \neq \emptyset.
\]
Following Theorem \ref{approximate:continuous}, $f \perp_B^\epsilon g.$ Consequently, $\mathcal{Y} \perp_B^\epsilon g$ and this contradicts the fact that $\mathcal{Y}$ is strongly anti-coproximinal. This completes the necessary part of the theorem.\\

(ii) $\implies$ (i): 	Suppose on the contrary that $\mathcal{Y}$ is not strongly anti-coproximinal. In other words, there exists $\epsilon \in [0,1), g \in S_{C_0(K, \mathbb{X})}$ such that $\mathcal{Y} \perp_B^\epsilon g.$ As $M_g$ is nonempty, suppose $\|g(k')\|=\|g\|=1,$ for some $k'\in K.$ Let $x^* \in J(g(k')).$ As $B_{\mathbb{X}^*}=\overline{co(w^*\mymathhyphen st \mymathhyphen Exp(B_{\mathbb{X}^*}))}^{w^*}$, following Lemma \ref{lemma:kadet} there exists $z^* \in w^*\mymathhyphen st \mymathhyphen Exp(B_{\mathbb{X}^*})$ such that $ \Re[z^*(g(k'))] > \frac{1+\epsilon}{2}.$ Since $z^*$ and $g$ are continuous,  the set $U= \{u\in K: \Re[z^* (g(u))]>\frac{1+\epsilon}{2}\}$ is open in $K.$  Clearly $U$ is nonempty, as $k' \in U.$  We now define the weak*-open set $V$ in precisely the same manner as in Theorem \ref{sufficient; strong} and proceeding similarly, we arrive at a contradiction. This completes the proof of the theorem.
\end{proof}

Recall that a Banach space $\mathbb{X}$ is said to be weakly compactly
generated (WCG) \cite{P} if there exists a weakly compact subset of $\mathbb{X}$ whose
linear span is dense in $\mathbb{X}$. Following \cite[Cor. 11]{P}, whenever $\mathbb{X}^*$ is WCG then $B_{\mathbb{X}^*}= \overline{co(w^* \mymathhyphen st \mymathhyphen Exp(B_{\mathbb{X}^*}))}^{w^*}$ and therefore, the following is immediate.  

\begin{cor}
Let $K$ be a locally compact normal space. Let $\mathbb{X}$ be a Banach space such that $\mathbb{X}^*$ is WCG.  Then the following are equivalent:
\begin{itemize}
	\item [(i)] A subspace $\mathcal{Y}$ is strongly anti-coproximinal in $C_0(K, \mathbb{X}).$
	\item[(ii)] For any nonempty open set $U \subset K$ and for any  nonempty weak*-open set $ V \subset S_{\mathbb{X}^*}$ containing a weak*-strongly exposed point,  there exists $f \in \mathcal{Y}$ such that $M_f \subset U$ and $J(f(k)) \subset V ~\forall k \in M_f.$\\
\end{itemize}
\end{cor}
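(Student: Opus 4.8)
The plan is to prove this as an immediate specialization of Theorem~\ref{theorem:C(K,X)}, since the only role of the WCG hypothesis is to supply the standing representation of the dual unit ball that the theorem requires. First I would invoke \cite[Cor.~11]{P}, which guarantees that whenever $\mathbb{X}^*$ is weakly compactly generated one has
\[
B_{\mathbb{X}^*} = \overline{co(w^* \mymathhyphen st \mymathhyphen Exp(B_{\mathbb{X}^*}))}^{w^*}.
\]
This is precisely the hypothesis imposed on $\mathbb{X}$ in Theorem~\ref{theorem:C(K,X)}, and no further structure of $\mathbb{X}^*$ is used anywhere in that theorem's argument.

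With this representation in hand, I would simply apply Theorem~\ref{theorem:C(K,X)} verbatim: the locally compact normal space $K$ and the Banach space $\mathbb{X}$ satisfy every assumption of that theorem, so the equivalence of the two listed conditions for an arbitrary subspace $\mathcal{Y}$ of $C_0(K,\mathbb{X})$ carries over unchanged. Thus statements (i) and (ii) of the corollary are exactly the two equivalent statements of the theorem, and the equivalence is inherited.

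There is, in effect, no obstacle to overcome here: the corollary is a pure weakening of hypotheses, replacing the abstract ``closed convex hull of weak*-strongly exposed points'' condition by the more readily checkable sufficient condition that $\mathbb{X}^*$ be WCG. The only point worth stating explicitly in the write-up is the logical chain WCG on $\mathbb{X}^* \Rightarrow$ the dual ball representation $\Rightarrow$ applicability of Theorem~\ref{theorem:C(K,X)}; everything else is a direct citation. Accordingly I would keep the proof to a single sentence that records this implication and then defers to the theorem, which is indeed how the excerpt frames it (``the following is immediate'').
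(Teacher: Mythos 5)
Your proposal is correct and matches the paper's own argument exactly: the paper likewise cites \cite[Cor.~11]{P} to obtain $B_{\mathbb{X}^*}= \overline{co(w^* \mymathhyphen st \mymathhyphen Exp(B_{\mathbb{X}^*}))}^{w^*}$ from the WCG hypothesis on $\mathbb{X}^*$, and then declares the corollary immediate from Theorem~\ref{theorem:C(K,X)}. Nothing is missing; your one-sentence deferral is precisely how the paper handles it.
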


Note that whenever $|K|=n$ , $C_0(K, \mathbb{X})=C(K, \mathbb{X}) = \ell_{\infty}^n(\mathbb{X})$ and therefore, the following corollary is immediate.

\begin{cor}\label{cor:directsum}
Let $\mathbb{X}$ be  a Banach space such that $B_{\mathbb{X}^*}= \overline{co(w^* \mymathhyphen st \mymathhyphen Exp(B_{\mathbb{X}^*}))}^{w^*} $. Then a subspace $\mathcal{Y} $ is strongly anti-coproximinal in $\ell_{\infty}^n(\mathbb{X})$ if and only if for any  $ k\in \{ 1, 2, \ldots, n\}$ and for any nonempty weak*-open set $V \subset \mathbb{X}^*$ containing a weak*-strongly exposed point of $B_{\mathbb{X}^*}$,  there exists $\widetilde{y} = (y_1, y_2, \ldots, y_n)\in \mathcal{Y}$ such that $ \|y_k\| > \|y_i\|\, \forall k \neq i$ and $ J(y_k) \subset V.$
\end{cor}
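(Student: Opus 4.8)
The plan is to derive this as a direct specialization of Theorem \ref{theorem:C(K,X)} to the finite discrete space $K = \{1, 2, \ldots, n\}$. First I would note that such $K$ is compact (hence locally compact) normal Hausdorff, so the theorem applies, and the hypothesis on $B_{\mathbb{X}^*}$ is exactly the standing assumption. Under the canonical identification $C_0(K, \mathbb{X}) = C(K,\mathbb{X}) = \ell_\infty^n(\mathbb{X})$, a function $f$ becomes a tuple $\widetilde{y} = (y_1, \ldots, y_n)$ with $y_i = f(i)$, the norm is $\|\widetilde{y}\| = \max_i \|y_i\|$, every nonempty subset of $K$ is open, and the norm attainment set is $M_{\widetilde{y}} = \{i : \|y_i\| = \max_j \|y_j\|\}$, with $J(f(i)) = J(y_i)$. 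It then remains to check that, in this discrete setting, condition (ii) of Theorem \ref{theorem:C(K,X)} is equivalent to the condition stated in the corollary.

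The key step is to show that verifying the theorem's condition over all nonempty open $U$ reduces to verifying it over singletons $U = \{k\}$. For the direction from the corollary to the theorem, given an arbitrary nonempty open $U$ and a suitable weak*-open $V$, I would pick any $k \in U$ and invoke the corollary's condition at that $k$ to produce $\widetilde{y} \in \mathcal{Y}$ with $\|y_k\| > \|y_i\|$ for all $i \neq k$ and $J(y_k) \subset V$; the strict inequalities force $M_{\widetilde{y}} = \{k\} \subset U$, and the requirement $J(f(j)) \subset V$ over $j \in M_{\widetilde{y}}$ collapses to $J(y_k) \subset V$, which holds. For the converse, I would fix $k$ and apply the theorem's condition with $U = \{k\}$: the witness $\widetilde{y}$ satisfies $\emptyset \neq M_{\widetilde{y}} \subset \{k\}$, hence $M_{\widetilde{y}} = \{k\}$, which is exactly $\|y_k\| > \|y_i\|$ for all $i \neq k$, together with $J(y_k) \subset V$. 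This is precisely the corollary's condition.

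Two minor points I would address. First, the theorem phrases $V$ as a relatively weak*-open subset of $S_{\mathbb{X}^*}$, whereas the corollary takes $V \subset \mathbb{X}^*$ weak*-open; since supporting functionals lie on $S_{\mathbb{X}^*}$, the condition $J(y_k) \subset V$ is unchanged upon replacing $V$ by $V \cap S_{\mathbb{X}^*}$, so the two quantifications over $V$ agree. Second, the nonemptiness of $M_{\widetilde{y}}$ (guaranteed since $M_f$ is nonempty for every $f \in S_{C_0(K,\mathbb{X})}$) is exactly what makes $M_{\widetilde{y}} \subset \{k\}$ force $M_{\widetilde{y}} = \{k\}$. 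I expect no substantive obstacle here: the entire content is the passage from arbitrary open $U$ to singletons, and everything else is the routine bookkeeping of the identification $C_0(K,\mathbb{X}) = \ell_\infty^n(\mathbb{X})$.
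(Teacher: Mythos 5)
Your proposal is correct and matches the paper exactly: the paper states this corollary as an immediate specialization of Theorem \ref{theorem:C(K,X)} to a set $K$ with $|K|=n$ (discrete, hence compact and normal), under the identification $C_0(K,\mathbb{X})=\ell_\infty^n(\mathbb{X})$. Your reduction of arbitrary open $U$ to singletons via $M_{\widetilde{y}}=\{k\} \iff \|y_k\|>\|y_i\|$ for all $i\neq k$, and the remark on $V\cap S_{\mathbb{X}^*}$, are precisely the routine details the paper leaves unstated.
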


For finite-dimensional Banach space, every  exposed point is a weak*-strongly exposed point and $B_{\mathbb{X}^*}= \overline{co(Exp(B_{\mathbb{X}^*}))}^{w^*}.$ Hence we obtain the following corollary.

\begin{cor}
Let $K$ be a locally compact normal space. Then the following are equivalent:
\begin{itemize}
	\item [(i)] A subspace $\mathcal{Y}$ is strongly anti-coproximinal in $C_0(K, \mathbb{X}).$
	\item[(ii)] For any nonempty open set $U \subset K$ and for any  nonempty open set $ V \subset \mathbb{X}^*$ containing an exposed point of $B_{\mathbb{X}^*}$,  there exists $f \in \mathcal{Y}$ such that $M_f \subset U$ and $J(f(k)) \subset V ~\forall k \in M_f.$
\end{itemize}
\end{cor}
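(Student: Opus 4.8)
The plan is to deduce this corollary as a direct specialization of Theorem \ref{theorem:C(K,X)} to the finite-dimensional setting. All the work lies in verifying that, when $\mathbb{X}$ is finite-dimensional, the hypothesis $B_{\mathbb{X}^*} = \overline{co(w^* \mymathhyphen st \mymathhyphen Exp(B_{\mathbb{X}^*}))}^{w^*}$ of that theorem is automatically satisfied, and in checking that its condition (ii) collapses to the simpler condition (ii) stated here.

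First I would record that since $\dim \mathbb{X} < \infty$, the dual $\mathbb{X}^*$ is also finite-dimensional, so the weak*-topology on $\mathbb{X}^*$ coincides with the norm topology; in particular ``weak*-open'' and ``open'' are interchangeable, and weak* sequential convergence is just norm convergence. Next I would show $Exp(B_{\mathbb{X}^*}) \subseteq w^* \mymathhyphen st \mymathhyphen Exp(B_{\mathbb{X}^*})$: if $x^*$ is exposed by some $x \in S_{\mathbb{X}}$ (identifying $\mathbb{X}^{**}$ with $\mathbb{X}$), then $x^*$ is the unique maximizer of $x$ on $B_{\mathbb{X}^*}$, and compactness of $B_{\mathbb{X}^*}$ forces any sequence $\{x_n^*\} \subset S_{\mathbb{X}^*}$ with $x_n^*(x) \to x^*(x)$ to converge to $x^*$ in norm, hence weak*. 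Thus every exposed point is weak*-strongly exposed.

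It then remains to verify the ball representation. By Straszewicz's theorem the exposed points are dense in the extreme points, while Minkowski's theorem (the finite-dimensional Krein--Milman theorem) gives $B_{\mathbb{X}^*} = co(Ext(B_{\mathbb{X}^*}))$; combining these via $\overline{co(Exp(B_{\mathbb{X}^*}))} \supseteq co(\overline{Exp(B_{\mathbb{X}^*})}) \supseteq co(Ext(B_{\mathbb{X}^*}))$ yields $B_{\mathbb{X}^*} = \overline{co(Exp(B_{\mathbb{X}^*}))}^{w^*}$, and together with the inclusion of the previous step this gives $B_{\mathbb{X}^*} = \overline{co(w^* \mymathhyphen st \mymathhyphen Exp(B_{\mathbb{X}^*}))}^{w^*}$. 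With this, Theorem \ref{theorem:C(K,X)} applies: its condition (ii) asks, for every nonempty open $U \subset K$ and every nonempty weak*-open $V \subset S_{\mathbb{X}^*}$ containing a weak*-strongly exposed point, for an $f \in \mathcal{Y}$ with $M_f \subset U$ and $J(f(k)) \subset V$ for all $k \in M_f$. By the two facts above, ``weak*-open'' reads as ``open'' and ``weak*-strongly exposed point'' as ``exposed point,'' so this is verbatim condition (ii) of the corollary, and the equivalence transfers directly. The only genuine obstacle is the elementary but careful compactness argument upgrading exposed to strongly exposed in Step two; everything else is a classical identification in finite dimensions.
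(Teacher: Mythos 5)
Your proposal is correct and takes essentially the same route as the paper, which likewise obtains the corollary as an immediate specialization of Theorem \ref{theorem:C(K,X)} by noting that in finite dimensions every exposed point of $B_{\mathbb{X}^*}$ is weak*-strongly exposed and that $B_{\mathbb{X}^*}=\overline{co(Exp(B_{\mathbb{X}^*}))}^{w^*}$; your compactness and Straszewicz--Minkowski arguments merely supply details the paper leaves implicit. For the direction (ii) $\Rightarrow$ (i) you also tacitly use the converse inclusion that every weak*-strongly exposed point is exposed, but this is immediate from the definitions (unique attainment of the supremum at the exposing vector) and is glossed over in the paper as well.
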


Considering the set $w^* \mymathhyphen st \mymathhyphen Exp(B_{\mathbb{X}^*})$ is isolated, we can get the following simpler characterization of strongly anti-coproximinal subspaces in $C_0(K, \mathbb{X}).$

\begin{theorem}\label{theorem:isolated}
Let $K$ be a locally compact normal space. Let $\mathbb{X}$ be a Banach space such that $B_{\mathbb{X}^*}= \overline{co(w^* \mymathhyphen st \mymathhyphen Exp(B_{\mathbb{X}^*}))}^{w^*}$ and the set $w^* \mymathhyphen st \mymathhyphen Exp(B_{\mathbb{X}^*})$ is isolated.  Then the following are equivalent:
\begin{itemize}
	\item [(i)] A subspace $\mathcal{Y}$ is strongly anti-coproximinal in $C_0(K, \mathbb{X}).$
	\item[(ii)] For any nonempty open set $U \subset K$ and for any $x^* \in w^* \mymathhyphen st \mymathhyphen Exp(B_{\mathbb{X}^*})$,  there exists $f \in \mathcal{Y}$ such that $M_f \subset U$ and $J(f(k)) =\{x^*\} ~\forall k \in M_f.$
\end{itemize}
\end{theorem}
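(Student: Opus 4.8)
The plan is to deduce the theorem entirely from the characterization already established in Theorem \ref{theorem:C(K,X)}; the only new ingredient is that the isolatedness of $w^* \mymathhyphen st \mymathhyphen Exp(B_{\mathbb{X}^*})$ lets us replace the clause ``for any weak*-open $V$ containing a weak*-strongly exposed point'' by the single point $x^*$ itself. For the implication (ii) $\implies$ (i), I would simply verify condition (ii) of Theorem \ref{theorem:C(K,X)}. Given a nonempty open $U \subset K$ and a nonempty weak*-open $V \subset S_{\mathbb{X}^*}$ containing some weak*-strongly exposed point $x^*$, the present condition (ii) produces $f \in \mathcal{Y}$ with $M_f \subset U$ and $J(f(k)) = \{x^*\} \subset V$ for every $k \in M_f$, which is precisely what Theorem \ref{theorem:C(K,X)}(ii) demands. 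Hence $\mathcal{Y}$ is strongly anti-coproximinal.

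For the reverse implication (i) $\implies$ (ii), I would start from Theorem \ref{theorem:C(K,X)}, which under (i) guarantees its condition (ii). Fix a nonempty open $U \subset K$ and a point $x^* \in w^* \mymathhyphen st \mymathhyphen Exp(B_{\mathbb{X}^*})$. Writing $A := w^* \mymathhyphen st \mymathhyphen Exp(B_{\mathbb{X}^*})$ and using that $A$ is isolated, I would choose a weak*-open set $V \subset S_{\mathbb{X}^*}$ with $V \cap A = \{x^*\}$. Applying Theorem \ref{theorem:C(K,X)}(ii) to this $U$ and $V$ yields $f \in \mathcal{Y}$ with $M_f \subset U$ and $J(f(k)) \subset V$ for all $k \in M_f$. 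The whole content of the proof is then to upgrade the inclusion $J(f(k)) \subset V$ to the equality $J(f(k)) = \{x^*\}$.

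This upgrade rests on two observations. First, since the weak* topology on $B_{\mathbb{X}^*}$ is Hausdorff, the relation $V \cap A = \{x^*\}$ self-improves to $V \cap \overline{A}^{w^*} = \{x^*\}$: if there were a point $q \in V \cap \overline{A}^{w^*}$ with $q \neq x^*$, one could find a weak*-open neighbourhood $O$ of $q$ with $O \subseteq V$ and $x^* \notin O$, whence $O \cap A \subseteq V \cap A = \{x^*\}$ forces $O \cap A = \emptyset$, contradicting $q \in \overline{A}^{w^*}$. Second, for each $k \in M_f$ the set $J(f(k))$ is a nonempty weak*-compact convex (exposed) face of $B_{\mathbb{X}^*}$, so its extreme points are extreme points of $B_{\mathbb{X}^*}$; by Milman's theorem together with the hypothesis $B_{\mathbb{X}^*} = \overline{co(A)}^{w^*}$, these extreme points lie in $\overline{A}^{w^*}$. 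Combining the two observations gives $Ext(J(f(k))) \subseteq \overline{A}^{w^*} \cap V = \{x^*\}$, and since a nonempty weak*-compact convex set is the weak*-closed convex hull of its extreme points (Krein--Milman), we conclude $J(f(k)) = \{x^*\}$ for every $k \in M_f$. This is exactly condition (ii).

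The main obstacle — and the only place the isolatedness hypothesis is genuinely used — is the passage from ``$V$ meets the strongly exposed points only at $x^*$'' to ``$V$ meets the extreme points of $B_{\mathbb{X}^*}$ only at $x^*$.'' One must control not merely the exposed points inside $V$ but all extreme points, and the face $J(f(k))$ could a priori carry extreme points of $B_{\mathbb{X}^*}$ that are not themselves weak*-strongly exposed. This is handled by the Hausdorff self-improvement to $\overline{A}^{w^*}$ followed by Milman's theorem; once it is in place, collapsing the face $J(f(k))$ to its single extreme point $x^*$ is routine.
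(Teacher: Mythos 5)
Your proposal is correct, but it takes a genuinely different route from the paper's. The paper proves both implications from scratch, mirroring the proof of Theorem \ref{theorem:C(K,X)}: for (i) $\implies$ (ii) it negates (ii), sets $\epsilon=\sup\{z^*(x):z^*\in S_{\mathbb{X}^*}\setminus\{x_0^*\}\}$, uses isolatedness to force $\epsilon<1$ (a sequence $z_n^*\in S_{\mathbb{X}^*}\setminus\{x_0^*\}$ with $z_n^*(x)\to 1$ would converge weak* to $x_0^*$), and then builds a Urysohn-type function $g(k)=\beta(k)x$ so that $\mathcal{Y}\perp_B^\epsilon g$ via Theorem \ref{approximate:continuous}; for (ii) $\implies$ (i) it re-runs the convex-combination computation with Lemma \ref{lemma:kadet} directly rather than citing the earlier theorem. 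You instead derive the whole statement as a formal corollary of Theorem \ref{theorem:C(K,X)}: your (ii) $\implies$ (i) is an immediate verification of its condition (ii) (cleaner than the paper's repetition), and your (i) $\implies$ (ii) uses isolatedness only to choose a weak*-open $V$ with $V\cap A=\{x^*\}$, upgrading $J(f(k))\subset V$ to $J(f(k))=\{x^*\}$ by the chain: $J(f(k))$ is a nonempty weak*-compact face of $B_{\mathbb{X}^*}$, so $Ext(J(f(k)))\subset Ext(B_{\mathbb{X}^*})\subset \overline{A}^{w^*}$ by Milman's converse to Krein--Milman, while your Hausdorff self-improvement gives $V\cap\overline{A}^{w^*}=\{x^*\}$, whence Krein--Milman collapses the face to $\{x^*\}$. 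All steps check out (that $J(f(k))$ is a face, hence inherits extremality, is standard, and $f(k)\neq 0$ for $k\in M_f$ since nontriviality of $\mathbb{X}$ rules out $f=0$). What each approach buys: the paper's direct argument is self-contained, avoids Milman's theorem, and produces the explicit quantitative bound $\epsilon<1$; but note that its $\epsilon<1$ claim implicitly reads ``isolated'' as isolated in $(S_{\mathbb{X}^*},w^*)$, since the approximating sequence ranges over the whole sphere, whereas your argument only needs each point of $w^*\mymathhyphen st\mymathhyphen Exp(B_{\mathbb{X}^*})$ to be isolated within that set itself --- so your modular proof is shorter, avoids duplicating the orthogonality machinery, and in fact establishes the theorem under a formally weaker reading of the isolatedness hypothesis.
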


\begin{proof}
(i) $\implies$ (ii):  Suppose on the contrary that atleast one of the following is true: 

\begin{itemize}
	\item[(a)] There exists a nonempty open set $U \subset K$ such that  for any $f \in \mathcal{Y},$ $ M_f \cap (K \setminus U) \neq \emptyset$.
	
	\item[(b)] There exists  $x_0^* \in w^* \mymathhyphen st \mymathhyphen Exp(B_{\mathbb{X}^*})$ such that for any $f \in \mathcal{Y},$  $J(f(k_f)) \cap (S_{\mathbb{X}^*} \setminus \{x_0^*\} ) \neq \emptyset,$ for some $k_f \in M_f.$
\end{itemize}

Let us first assume (a) holds true.  Then proceeding similar arguments as in Proposition \ref{prop:anti} we get a contradiction.  

Now we assume (b) holds true.
As $x_0^*$ is a weak*-strongly exposed point of $B_{\mathbb{X}^*}, $ there exists $x \in S_{\mathbb{X}}$ such that $x_n^*(x) \longrightarrow 1 \implies x_n^* \xrightarrow{\enskip w^* \enskip}  x_0^*,$ for any sequence $\{x_n^*\} \subset S_{\mathbb{X}^*}.$ Let 
$$\sup \bigg\{ z^*( x): z^* \in S_{\mathbb{X}^*} \setminus \{x_0^*\}   \bigg\}= \epsilon.$$ 
We claim that $\epsilon < 1.$ If $\epsilon=1,$ then there exists a sequence $\{z_n^*\} \subset S_{\mathbb{X}^*} \setminus \{x_0^*\}$ such that $z_n^*(x) \longrightarrow 1 = x_0^*(x).$  Therefore, $z_n^* \xrightarrow{\enskip w^* \enskip}  x_0^*,$ a contradiction to the fact that the set $w^* \mymathhyphen st \mymathhyphen Exp(B_{\mathbb{X}^*})$ is isolated. 
Let $k_0 \in K$ and $U$ be an open set such that $k_0 \in U \subset K.$ As $K$ is normal, from Uryshonn's Lemma \cite{Munkres} there exists   $\beta: K \to [0,1]$ such that $\beta(k_0) =1$ and $\beta(k)=0~ \forall k \in K \setminus U.$ Let $g: K \to \mathbb{X}$ such that $g(k) = \beta (k) x,~\forall k \in K.$ Clearly, $g \in C_0(K, \mathbb{X}).$ We show that $\mathcal{Y} \perp_B^\epsilon g.$ Let $f \in \mathcal{Y}$. Suppose  $ k_f \in M_f $ and $x_f^* \in J(f(k_f)) \cap (S_{\mathbb{X}^*} \setminus \{x_0^*\}).$ Clearly, $x_f^*(x) \leq \epsilon,$ and so $x_f^*(g(k_f)) \leq \epsilon.$ Therefore, 
\[ 
x_f^*(g(k_f)) \in \mathcal{D}(\epsilon) \cap co\bigg(\bigg\{  y^*(g(k)): k \in K, y^* \in C,   y^*(f(k))= \|f\|\bigg\}\bigg).
\]
Following Theorem \ref{approximate:continuous}, $f \perp_B^\epsilon g.$ Consequently, $\mathcal{Y} \perp_B^\epsilon g$ and this contradicts the fact that $\mathcal{Y}$ is strongly anti-coproximinal. This completes the necessary part of the theorem.\\

(ii) $\implies$ (i): 	Suppose on the contrary that $\mathcal{Y}$ is not strongly anti-coproximinal. In other words, there exists $\epsilon \in [0,1), g \in S_{C_0(K, \mathbb{X})}$ such that $\mathcal{Y} \perp_B^\epsilon g.$ As $M_g$ is nonempty, suppose $\|g(k')\|=\|g\|=1,$ for some $k'\in K.$ Let $x^* \in J(g(k')).$ As $B_{\mathbb{X}^*}=\overline{co(w^*\mymathhyphen st \mymathhyphen Exp(B_{\mathbb{X}^*}))}^{w^*}$, following Lemma \ref{lemma:kadet} there exists $z^* \in w^*\mymathhyphen st \mymathhyphen Exp(B_{\mathbb{X}^*})$ such that $ \Re[z^*(g(k'))] > \epsilon.$ Since $z^*$ and $g$ are continuous,  the set
$$U= \{u\in K: \Re[z^* (g(u))]> \epsilon \}$$
is open in $K.$  Clearly $U$ is nonempty, as $k' \in U.$ From hypothesis there exists $f \in \mathcal{Y}$ such that $M_f \subset U$ and $J(f(k)) =\{z^*\}, \forall k \in M_f.$ 
Since $f\perp_B^{\epsilon} g,$ it follows that
$$\mathcal{D}(\epsilon)\cap co \bigg(\bigg\{y^*(g(k)): k\in K, y^*\in S_{\mathbb{X}^*}, y^*(f(k))=\|f\|\bigg\} \bigg) \neq \emptyset.$$ 
As $M_f \subset U$ and $J(f(k)) =\{z^*\},$ we get
$$\mathcal{D}(\epsilon)\cap co \bigg( \bigg\{z^*(g(k)): k\in U,  z^*(f(k))=\|f\|\bigg\} \bigg) \neq \emptyset.$$ 
This implies  there exist $k_i \in M_f \subset U$ and $t_i > 0$ for $i \in \{1,2,3\}$ such that 
$\sum_{i=1}^3 t_i =1$, $z^* (f(k_i))= \|f\|$ and $| \sum_{i=1}^3 t_i z^*(g(k_i))| \leq  \epsilon$. Therefore, $$ \sum_{i=1}^3 t_i \Re[ z^*(g(k_i)) ] \leq \epsilon.$$ But for any $u \in U,  \Re[z^*(g(u))] > \epsilon,$ which is a contradiction.  Thus we get our desired result. 
This completes the proof of the theorem.
\end{proof}

For a finite-dimensional real polyhedral Banach space $\mathbb{X}$, every extreme point of $B_{\mathbb{X}^*}$ is an exposed point and the set of all extreme point is finite, so the following corollary is immediate.

\begin{cor}\label{cor:polyhedral}
Let $K$ be a locally compact normal space and let $\mathbb{X}$ be a finite-dimensional real polyhedral Banach space.  Then the followings are equivalent:
\begin{itemize}
	\item[(i)] A subspace $\mathcal{Y}$ is strongly anti-coproximinal in $C_0(K, \mathbb{X})$.
	\item[(ii)] for any nonempty open set $U \subset K$ and for any $x^* \in Ext(B_{\mathbb{X}^*})$ there exists an $f \in \mathcal{Y}$ such that $M_f \subset U$ and $J(f(k)) =\{x^*\}~ \forall k \in M_f.$  
	\item[(iii)] for any nonempty open set $U \subset K$ and for any maximal face $F$ of $B_{\mathbb{X}}$ there exists an $f \in \mathcal{Y}$ such that $M_f \subset U$ and $f(k) \in int(F)~ \forall k \in M_f.$  
\end{itemize}
\end{cor}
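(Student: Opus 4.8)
The plan is to obtain (i) $\iff$ (ii) as a direct specialization of Theorem \ref{theorem:isolated}, and then to prove (ii) $\iff$ (iii) by translating the analytic condition on supporting functionals into the geometric language of maximal faces via polyhedral duality.

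First I would verify that a finite-dimensional real polyhedral $\mathbb{X}$ meets both hypotheses of Theorem \ref{theorem:isolated}. Since $\mathbb{X}$ is finite-dimensional, the weak* topology on $\mathbb{X}^*$ coincides with the norm topology and $B_{\mathbb{X}^*}$ is compact. In a polyhedral space every extreme point of $B_{\mathbb{X}^*}$ is exposed, and compactness of $B_{\mathbb{X}^*}$ upgrades each exposed point to a strongly exposed one; hence $w^*\mymathhyphen st \mymathhyphen Exp(B_{\mathbb{X}^*}) = Ext(B_{\mathbb{X}^*})$, which is a finite set. A finite subset of a metric space is discrete, so $w^*\mymathhyphen st \mymathhyphen Exp(B_{\mathbb{X}^*})$ is isolated, and the finite-dimensional Krein--Milman theorem gives $B_{\mathbb{X}^*} = co(Ext(B_{\mathbb{X}^*})) = \overline{co(w^*\mymathhyphen st \mymathhyphen Exp(B_{\mathbb{X}^*}))}^{w^*}$. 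With both hypotheses in force, Theorem \ref{theorem:isolated} yields (i) $\iff$ (ii) verbatim, the quantifier over $w^*\mymathhyphen st \mymathhyphen Exp(B_{\mathbb{X}^*})$ now reading as a quantifier over $Ext(B_{\mathbb{X}^*})$.

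For (ii) $\iff$ (iii) I would invoke the facet--vertex duality of a polytope. Writing $\pm F_1, \dots, \pm F_r$ for the maximal faces of $B_{\mathbb{X}}$ and $\pm y_1^*, \dots, \pm y_r^*$ for the associated extreme functionals of $B_{\mathbb{X}^*}$ as in \cite[Lemma 2.1]{SPBB}, one has $F_i = \{x \in B_{\mathbb{X}} : y_i^*(x) = 1\}$, and this sets up a bijection $x^* \leftrightarrow F$ between $Ext(B_{\mathbb{X}^*})$ and the maximal faces of $B_{\mathbb{X}}$. The crucial point to record is that for $w \in S_{\mathbb{X}}$ and an extreme functional $x^*$, $$J(w) = \{x^*\} \iff w \in int(F),$$ where $F$ is the face dual to $x^*$: indeed $J(w)$ is the exposed face of $B_{\mathbb{X}^*}$ equal to the convex hull of those extreme functionals attaining the value $1$ at $w$, and this convex hull collapses to a single extreme point exactly when $w$ lies on one facet and in its relative interior. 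Granting this equivalence, the clause $J(f(k)) = \{x^*\}$ in (ii) is identical to $f(k) \in int(F)$ in (iii) for every $k \in M_f$, and the correspondence $x^* \leftrightarrow F$ (together with the fact that $-x^*$ is dual to $-F$, so that both orientations are accounted for) matches the universal quantifiers in (ii) and (iii) term by term. Hence (ii) and (iii) assert the same statement.

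The only step carrying genuine content is the polyhedral characterization $J(w) = \{x^*\} \iff w \in int(F)$; it is elementary convex geometry and is exactly the ingredient already exploited in the proof of Theorem \ref{prop:polyhedral}. All remaining steps are verifications of definitions and bookkeeping of quantifiers, so I anticipate no analytic obstacle.
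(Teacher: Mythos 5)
Your proposal is correct and takes essentially the same route as the paper: the paper likewise obtains (i) $\iff$ (ii) by observing that for a finite-dimensional real polyhedral space every extreme point of $B_{\mathbb{X}^*}$ is a weak*-strongly exposed point and $Ext(B_{\mathbb{X}^*})$ is finite, so that Theorem \ref{theorem:isolated} applies directly, and it obtains (ii) $\iff$ (iii) from precisely the bijection $x^* \leftrightarrow F$ between $Ext(B_{\mathbb{X}^*})$ and maximal faces of $B_{\mathbb{X}}$ with $J(x)=\{x^*\} \iff x \in int(F)$. Your write-up only adds explicitly the verifications the paper leaves implicit (exposed implies strongly exposed by compactness, Krein--Milman, discreteness of a finite set, and the $\pm$ bookkeeping), which is consistent with the paper's one-line proof.
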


\begin{proof}
(i) $\iff$ (ii) follows from Theorem \ref{theorem:isolated} directly. \\
(ii) $\iff$ (iii) follows from the fact that for any $x^* \in Ext(B_{\mathbb{X}^*})$ there is an one one corresponds with a maximal face $F$ of $B_{\mathbb{X}}, $ where the connection is $x \in int(F) \iff J(x)=\{x^*\}.$  
\end{proof}

We end this section by providing an example of strongly anti-coproximinal subspaces in support of Theorem \ref{theorem:isolated}.

\begin{example}
Let $K=\{1, 2\}$ and $\mathbb{X}=\ell_1^3.$ In other words, $C(K,\mathbb{X})=  \ell_1^3 \oplus_{\infty} \ell_1^3.$ Consider the following elements of $\ell_1^3.$
\begin{align*}
	u_1 &= (1,1,1),     & u_2 &= (1,-1,-1), & u_3 &= (1,-1,1),   & u_4 &= (1,1,-1), \\
	u_5 &= (2,0,0),           & u_6 &= (-2,0,0),   & u_7 &= (0, 0, 0),    & u_8 &= (0,2,0), \\
	v_1 &= (1,1,0),     & v_2 &= v_3 = (0,0,1), & v_4 &= (-1,1,0), & v_5 &= (1,1,1), \\
	v_6 &= (1,-1,-1),     & v_7 &= (1,-1,1),  & v_8 &= (1,1,-1).
\end{align*}
Let $\mathcal{Y}= span \{ (u_i, v_i): 1 \leq i \leq 8\} \subset C(K, \mathbb{X}).$ It is straightforward to verify that $dim~\mathcal{Y}=5.$
Define the functionals $x_1^*, x_2^*, x_3^*, x_4^* \in (\ell_1^3)^*$ via the canonical isometric isomorphism $\psi$ between $(\ell_1^3)^*$ and $\ell_\infty^3$ as:
\[
\psi(x_1^*) = (1,1,1), \quad \psi(x_2^*) = (1,-1,-1), \quad \psi(x_3^*) = (1,-1,1), \quad \psi(x_4^*) = (1,1,-1).
\]
Clearly, $Ext(B_{(\ell_1^3)^*})= \{ \pm x_1^*, \pm x_2^*, \pm x_3^*, \pm x_4^*\}.$ Then it is easy to observe that    for $x_i^* \in  Ext(B_{(\ell_1^3)^*}$, the corresponding pair $(u_i, v_i)$,  satisfies the property that $\|u_i\| > \|v_i\|$ and $J(u_i)=\{x_i^*\},$ where $1\leq i \leq 4.$ Similarly, for the second coordinate, for any $x_i^* \in  Ext(B_{(\ell_1^3)^*}$, the corresponding pair $(u_{i+4}, v_{i+4})$,  satisfies the property that $\|v_{i+4}\| > \|u_{i+4}\|$ and $J(v_{i+4})=\{x_i^*\},$ where $1\leq i\leq 4.$ Therefore, applying Theorem \ref{theorem:isolated}, we get that $\mathcal{Y}$ is a strongly anti-coproximinal subspace. It should be noted that $\ell_1^3$ does not have any strongly anti-coproximinal subspace \cite[Th. 2.34]{SSGP3}, however the space $\ell_1^3 \oplus_{\infty} \ell_1^3$ does possess such subspaces.  \\
\end{example}

\section{Anti-coproximinality in the space of bounded linear operators}

In this section, we study the anti-coproximinality  (strongly anti-coproximinality)  in the space of all bounded linear operators defined between Banach spaces. Recall that  an operator $T \in \mathbb{L}(\mathbb{X}, \mathbb{Y})$ is said to be an absolutely strongly exposing operator \cite{Jung} if there exists $x_0 \in B_{\mathbb{X}}$ such that whenever a sequence $\{x_n\}$ in $B_{\mathbb{X}}$ satisfies $\|Tx_n\| \to \|T\|,$ then there exists a sequence $\{\theta_n\}\subset \mathbb{T}$ such that $\theta_n x_n \to x_0.$  It is easy to observe that for such an operator $T,$ $M_T = \{\mu x_0: \mu \in \mathbb{T}\},$ for some $x_0 \in S_{\mathbb{X}}.$ The set of absolutely strongly exposing operators of $\mathbb{L}(\mathbb{X}, \mathbb{Y})$ is denoted by $ASE(\mathbb{X}, \mathbb{Y})$. Let us recall the characterization of approximate orthogonality of a $T\in ASE(\mathbb{X}, \mathbb{Y})$ which will be needed in this section. 
\begin{theorem}\label{BSP}\cite[Th. 3.37]{SGSP}
Let $\mathbb{X}$, $\mathbb{Y}$ be two Banach spaces and let $\epsilon \in [0,1).$ Suppose that  $T \in ASE(\mathbb{X}, \mathbb{Y})$ with $M_T= \{ \mu x_0: \mu \in \mathbb{T}\}.$  Then for any $A \in S_{\mathbb{L}(\mathbb{X}, \mathbb{Y})}$, the following are equivalent:
\begin{itemize}
	\item[(i)] $T \perp_B^{\epsilon} A.$
	\item[(ii)] $ \Omega \cap \mathcal{D}(\epsilon)\neq \emptyset,$ where $\Omega = \{y^*(Ax_0): \, y^*\in J(Tx_0) \}.$
\end{itemize} 
Moreover, $T \perp_B A$ if and only if $Tx_0 \perp_B Ax_0.$
\end{theorem}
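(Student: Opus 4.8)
The plan is to reduce the operator statement to a scalar separation argument in $\mathbb{K}$, using the $ASE$ hypothesis to force every near-norming sequence to localise at the single direction $x_0$. Throughout I would assume $\|T\|=1$: since both $\perp_B^\epsilon$ and the set $J(Tx_0)$ (hence $\Omega$) are unaffected by multiplying $T$ by a positive scalar, this costs no generality, and $x_0\in S_{\mathbb X}$ with $\|Tx_0\|=1$, so every $y^*\in J(Tx_0)$ satisfies $y^*(Tx_0)=1$. The implication (ii)$\Rightarrow$(i) is the routine one: picking $y^*\in J(Tx_0)$ with $|y^*(Ax_0)|\le\epsilon$, one gets for every scalar $\lambda$
\[
\|T+\lambda A\|\ge\|(T+\lambda A)x_0\|\ge|y^*((T+\lambda A)x_0)|=|1+\lambda y^*(Ax_0)|\ge 1-|\lambda|\,\epsilon,
\]
which is exactly $T\perp_B^\epsilon A$ since $\|A\|=1$. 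No further work is needed here.

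The substance is in (i)$\Rightarrow$(ii), which I would prove by contradiction, assuming $\Omega\cap\mathcal{D}(\epsilon)=\emptyset$. The first step is a separation: since $J(Tx_0)$ is convex and weak*-compact and $y^*\mapsto y^*(Ax_0)$ is affine and weak*-continuous, $\Omega$ is a convex compact subset of $\mathbb{K}$ disjoint from the disc $\mathcal{D}(\epsilon)$, so separating it from $\mathcal{D}(\epsilon)$ produces $\theta\in\mathbb{T}$ with $\Re[\theta\,y^*(Ax_0)]>\epsilon$ for every $y^*\in J(Tx_0)$. I would then test $T\perp_B^\epsilon A$ along the directions $\lambda=-t\theta$: for a sequence $t_n\downarrow 0$ it yields $\|T-t_n\theta A\|\ge 1-\epsilon t_n$, so one may choose $x_n\in S_{\mathbb X}$ with $\|(T-t_n\theta A)x_n\|>1-\epsilon t_n-t_n^2$. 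Comparing with $\|(T-t_n\theta A)x_n\|\le\|Tx_n\|+t_n$ forces $\|Tx_n\|\to 1=\|T\|$.

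This is exactly where the $ASE$ hypothesis is indispensable and, I expect, is the main obstacle: plain norm attainment would only supply near-norming sequences spread across $M_T$, whereas $ASE$ gives unimodular scalars $\theta_n$ with $\theta_n x_n\to x_0$. Replacing $x_n$ by $\theta_n x_n$ (which alters none of the norms involved), I may assume $x_n\to x_0$, hence $Ax_n\to Ax_0$ and $Tx_n\to Tx_0$ in norm. Choosing $y_n^*\in J((T-t_n\theta A)x_n)$ and passing to a weak*-cluster point $y^*$ of $\{y_n^*\}$, the norm convergence of $Ax_n,Tx_n$ combined with weak*-convergence of $y_n^*$ (along the associated subnet) gives $y_n^*(Tx_n)\to y^*(Tx_0)$ and $y_n^*(Ax_n)\to y^*(Ax_0)$. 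Taking real parts in the identity $y_n^*(Tx_n)-t_n\theta\,y_n^*(Ax_n)=\|(T-t_n\theta A)x_n\|$, I would read off, on the one hand, $\Re[y^*(Tx_0)]=1$, whence $y^*\in J(Tx_0)$, and on the other hand $\Re[\theta\,y_n^*(Ax_n)]\le\epsilon+t_n$, which in the limit gives $\Re[\theta\,y^*(Ax_0)]\le\epsilon$. This contradicts the separation inequality, completing the hard direction.

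Finally, the ``moreover'' clause is the special case $\epsilon=0$: here $\mathcal{D}(0)=\{0\}$ and $\perp_B^0$ coincides with $\perp_B$, so the equivalence just established reads $T\perp_B A\iff 0\in\Omega$, that is, $\iff$ there exists $y^*\in J(Tx_0)$ with $y^*(Ax_0)=0$. By James' characterisation (Lemma \ref{James}) this last condition is precisely $Tx_0\perp_B Ax_0$, which yields the stated equivalence.
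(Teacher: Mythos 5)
This theorem is stated in the paper as a quotation from \cite[Th.~3.37]{SGSP} with no proof supplied, so there is no internal argument to compare against; judged on its own terms, your proof is correct and complete. Both directions are sound: the easy implication needs only a single functional from $\Omega\cap\mathcal{D}(\epsilon)$; in the hard direction, the strict separation of the weak*-compact convex set $\Omega$ from the disc $\mathcal{D}(\epsilon)$, the use of the ASE hypothesis to upgrade the near-norming sequence $\{x_n\}$ (extracted from testing $\perp_B^\epsilon$ at $\lambda=-t_n\theta$) to one converging to $x_0$, and the weak*-cluster-point passage giving $y^*\in J(Tx_0)$ with $\Re[\theta\,y^*(Ax_0)]\le\epsilon$ fit together without gaps, and the ``moreover'' clause correctly reduces to the case $\epsilon=0$ via Lemma \ref{James}. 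This is the natural argument for such a statement and is of exactly the same flavour as the cluster-point technique the present paper uses in Theorems \ref{prop:C(K,X)} and \ref{rotund}.
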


In the following result, we explore how anticoproximinal subspaces of $\mathbb{Y}$ are related to the space of bounded linear operators $\mathbb{L}(\mathbb{X}, \mathbb{Y})$.

\begin{theorem}\label{operator:anti}
Let $\mathbb{X}$ be a Banach space such that $st \mymathhyphen Exp(B_{\mathbb{X}})$ separates $\mathbb{X}^*$  and $\mathbb{Y}$ be any Banach space.	If $\mathbb{Z}$ is anti-coproximinal in $\mathbb{Y}$, then any subspace $\mathcal{W}$ of $\mathbb{L}(\mathbb{X}, \mathbb{Y})$ containing $\mathcal{F}(\mathbb{X}, \mathbb{Z})$ is anti-coproximinal in $\mathbb{L}(\mathbb{X}, \mathbb{Y}).$
\end{theorem}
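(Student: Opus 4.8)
The plan is to verify the equivalent formulation of anti-coproximinality recorded in Definition \ref{epsilon}(i): it suffices to show that for every nonzero $A \in \mathbb{L}(\mathbb{X}, \mathbb{Y})$ there exists an operator $W \in \mathcal{W}$ with $W \not\perp_B A$. Since $\mathcal{F}(\mathbb{X}, \mathbb{Z}) \subseteq \mathcal{W}$, it is enough to produce such a $W$ of rank one lying in $\mathcal{F}(\mathbb{X}, \mathbb{Z})$, and the whole argument will be built around a single well-chosen rank-one operator.

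First I would locate a strongly exposed point of $B_{\mathbb{X}}$ on which $A$ does not vanish. Because $A \neq 0$, there is $y^* \in \mathbb{Y}^*$ with $A^* y^* \neq 0$ in $\mathbb{X}^*$; the hypothesis that $st\mymathhyphen Exp(B_{\mathbb{X}})$ separates $\mathbb{X}^*$ then yields $x_0 \in st\mymathhyphen Exp(B_{\mathbb{X}})$ with $(A^* y^*)(x_0) = y^*(Ax_0) \neq 0$, hence $Ax_0 \neq 0$. Let $x_0^* \in S_{\mathbb{X}^*}$ be a strongly exposing functional for $x_0$, so that $x_0^*(x_0) = 1$. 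Since $\mathbb{Z}$ is anti-coproximinal in $\mathbb{Y}$ and $Ax_0$ is a nonzero element of $\mathbb{Y}$, the equivalent formulation of anti-coproximinality gives $z \in \mathbb{Z}$ with $z \not\perp_B Ax_0$; note that necessarily $z \neq 0$.

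Next I would define the rank-one operator $W \colon \mathbb{X} \to \mathbb{Y}$ by $W(x) = x_0^*(x)\, z$. Its range lies in $\mathbb{Z}$, so $W \in \mathcal{F}(\mathbb{X}, \mathbb{Z}) \subseteq \mathcal{W}$, and one checks that $\|W\| = \|z\|$ while $W x_0 = z$. The crux of the argument, and the step I expect to require the most care, is to show that $W$ is an absolutely strongly exposing operator with $M_W = \{\mu x_0 : \mu \in \mathbb{T}\}$. For this, suppose $\{x_n\} \subset B_{\mathbb{X}}$ satisfies $\|Wx_n\| = |x_0^*(x_n)|\,\|z\| \to \|W\| = \|z\|$, so $|x_0^*(x_n)| \to 1$. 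Choosing $\theta_n \in \mathbb{T}$ with $\theta_n x_0^*(x_n) = |x_0^*(x_n)|$ for large $n$ gives $x_0^*(\theta_n x_n) \to 1$, and since $x_0^*$ \emph{strongly} exposes $x_0$ we obtain $\theta_n x_n \to x_0$; this is precisely where strong exposedness (rather than mere exposedness) is indispensable, and the same computation pins down $M_W$.

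Finally, with $W \in ASE(\mathbb{X}, \mathbb{Y})$ and $M_W = \{\mu x_0 : \mu \in \mathbb{T}\}$ established, I would invoke the ``moreover'' clause of Theorem \ref{BSP} (applying it to $A/\|A\|$ and using the homogeneity of Birkhoff-James orthogonality in both arguments), which gives $W \perp_B A$ if and only if $W x_0 \perp_B A x_0$, that is, $z \perp_B Ax_0$. Since $z$ was chosen with $z \not\perp_B Ax_0$, it follows that $W \not\perp_B A$, and hence $\mathcal{W} \not\perp_B A$. As $A$ was an arbitrary nonzero operator, $\mathcal{W}$ is anti-coproximinal in $\mathbb{L}(\mathbb{X}, \mathbb{Y})$, completing the proof.
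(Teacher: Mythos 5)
Your proposal is correct and is essentially the paper's argument in contrapositive arrangement: both hinge on the rank-one operator $x \mapsto x_0^*(x)z$ built from a strongly exposing functional, the same verification that it is absolutely strongly exposing with $M_W=\{\mu x_0:\mu\in\mathbb{T}\}$, and the ``moreover'' clause of Theorem \ref{BSP} to reduce $W \perp_B A$ to $Wx_0 \perp_B Ax_0$. The only (cosmetic) difference is order of use of the hypotheses: the paper assumes $\mathcal{W}\perp_B A$ and shows $A$ vanishes at \emph{every} strongly exposed point before invoking separation, whereas you invoke separation up front to select a single point $x_0$ with $Ax_0\neq 0$ and a single witness $z\not\perp_B Ax_0$.
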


\begin{proof}
Let us assume $\mathcal{W} \perp_B A,$ for some $A \in \mathbb{L}(\mathbb{X}, \mathbb{Y}).$ To prove our theorem we show that $A=0.$ Take  $ \widetilde{x} \in st \mymathhyphen Exp(B_{\mathbb{X}})$ and  let $f_{\widetilde{x}} \in B_{\mathbb{X}^*}$ strongly exposes the point $\widetilde{x}.$ Let $z \in S_{\mathbb{Z}}.$  Define $T_z : \mathbb{X} \to \mathbb{Y}$ be such that $ T_z(x) = f_{\widetilde{x}} (x) z ~ \forall x \in \mathbb{X}.$ Clearly, $T_z \in  \mathcal{F}(\mathbb{X}, \mathbb{Y}) \subset \mathcal{W}$ and $\|T_z\|=1.$  Observe that $M_{T_z} =  \{ \mu \widetilde{x}: |\mu |=1\}.$ Let $\{x_n\}\subset S_\mathbb{X}$ be such that $\|T_z (x_n)\| \longrightarrow \|T_z\|.$ Therefore, $\lim \|f_{\widetilde{x}}(x_{n}) z\|= \|T_z\|.$ This implies $\lim |f_{\widetilde{x}} (x_{n})|=1.$ Therefore, there exists a sequence $\{\theta_n\} \subset \mathbb{T}$ such that $\theta_n f_{\widetilde{x}}(x_n) \longrightarrow 1,$ which implies that $f_{\widetilde{x}}(\theta_nx_n) \longrightarrow 1= f_{\widetilde{x}}(\widetilde{x}).$  Since $\widetilde{x}$ is strongly exposed point of $B_{\mathbb{X}}$, we have $ \theta_n x_{n} \longrightarrow \widetilde{x}.$ Therefore $T_z \in ASE(\mathbb{X}, \mathbb{Z}).$ As for any $z \in S_{\mathbb{Z}},$ $T_z \perp_B A,$ and so following Theorem \ref{BSP},  $$ T_z (\widetilde{x}) \perp_B A(\widetilde{x}) \implies z \perp_B A(\widetilde{x}) \implies \mathbb{Z} \perp_B A(\widetilde{x}).$$ As $\mathbb{Z}$ is anti-coproximinal in $\mathbb{X}$, we get $A(\widetilde{x})=0.$ As $st \mymathhyphen Exp(B_{\mathbb{X}})$ separates $\mathbb{X}^*$ and for any $x \in st \mymathhyphen Exp(B_{\mathbb{X}})$, $A(x)=0,$ would imply $A=0.$ Therefore, $\mathcal{W}$ is anti-coproximinal in $\mathbb{L}(\mathbb{X}, \mathbb{Y}).$
\end{proof}

In the above theorem, if we consider an additional  assumption on the subspace of $\mathbb{L}(\mathbb{X}, \mathbb{Y})$ then we obtain a characterization of anti-coproximinality of those subspaces.
\begin{theorem}\label{ASE; anti}
Let $\mathbb{X}, \mathbb{Y}$ be a Banach space such that $st \mymathhyphen Exp(B_{\mathbb{X}})$ separates $\mathbb{X}^*$. Suppose that  $\mathcal{W} \subset \mathbb{L}(\mathbb{X}, \mathbb{Y})$ is a subspace  containing  $\mathcal{F}(\mathbb{X}, \mathbb{Z})$ such that $ASE(\mathbb{X}, \mathbb{Z}) \cap \mathcal{W}$ is dense in $\mathcal{W}.$ Then $\mathcal{W}$
is  anti-coproximinal in $\mathbb{L}(\mathbb{X}, \mathbb{Y})$ if and only if  $\mathbb{Z}$ is anti-coproximinal in $\mathbb{Y}$.
\end{theorem}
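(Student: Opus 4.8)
The statement is a biconditional, and the forward implication is already in hand: if $\mathbb{Z}$ is anti-coproximinal in $\mathbb{Y}$, then, since $st\mymathhyphen Exp(B_{\mathbb{X}})$ separates $\mathbb{X}^*$ and $\mathcal{W} \supseteq \mathcal{F}(\mathbb{X}, \mathbb{Z})$, Theorem \ref{operator:anti} applies verbatim and gives that $\mathcal{W}$ is anti-coproximinal in $\mathbb{L}(\mathbb{X}, \mathbb{Y})$. The density hypothesis is not used here; it enters only in the converse.

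For the converse I would argue by contraposition, producing, out of a failure of anti-coproximinality for $\mathbb{Z}$, a single nonzero operator orthogonal to all of $\mathcal{W}$. So suppose $\mathbb{Z} \perp_B y_0$ for some nonzero $y_0$, which after scaling we may take in $S_{\mathbb{Y}}$. Fix $\widetilde{x} \in st\mymathhyphen Exp(B_{\mathbb{X}})$ with strongly exposing functional $f_{\widetilde{x}} \in S_{\mathbb{X}^*}$, and define the rank-one operator $A \in \mathcal{F}(\mathbb{X}, \mathbb{Y})$ by $A(x) = f_{\widetilde{x}}(x)\, y_0$. Then $\|A\| = \|f_{\widetilde{x}}\|\,\|y_0\| = 1$, so $A \in S_{\mathbb{L}(\mathbb{X}, \mathbb{Y})}$, while $A(\widetilde{x}) = y_0 \neq 0$, so $A$ is nonzero. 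The target is $\mathcal{W} \perp_B A$.

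The heart of the argument is to check orthogonality to $A$ on the dense set $ASE(\mathbb{X}, \mathbb{Z}) \cap \mathcal{W}$. Let $T$ be a nonzero member of this set with $M_T = \{\mu x_0 : \mu \in \mathbb{T}\}$. Since $\mathbb{Z}$ carries the norm inherited from $\mathbb{Y}$, the values $\|Tx\|$ and $\|T\|$ are unchanged whether $T$ is viewed as an operator into $\mathbb{Z}$ or into $\mathbb{Y}$; hence $T \in ASE(\mathbb{X}, \mathbb{Y})$ with the same norm-attainment set, and Theorem \ref{BSP} applies. Now $Tx_0 \in \mathbb{Z}$ while $Ax_0 = f_{\widetilde{x}}(x_0)\, y_0$ is a scalar multiple of $y_0$; from $\mathbb{Z} \perp_B y_0$ we obtain $Tx_0 \perp_B y_0$, and invariance of Birkhoff-James orthogonality under scaling of the right-hand argument yields $Tx_0 \perp_B Ax_0$ (the degenerate case $f_{\widetilde{x}}(x_0) = 0$, where $Ax_0 = 0$, being trivial). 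By the last equivalence in Theorem \ref{BSP}, $T \perp_B A$; this also holds trivially when $T = 0$.

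It remains to pass from the dense set to all of $\mathcal{W}$, and here the key observation is that orthogonality to a fixed operator is a closed condition: if $W_n \to W$ in $\mathbb{L}(\mathbb{X}, \mathbb{Y})$ with $W_n \perp_B A$, then for each fixed $\lambda$ one has $\|W + \lambda A\| = \lim_n \|W_n + \lambda A\| \geq \lim_n \|W_n\| = \|W\|$, so $W \perp_B A$. Thus $\{W \in \mathbb{L}(\mathbb{X}, \mathbb{Y}) : W \perp_B A\}$ is norm-closed and contains the subset $ASE(\mathbb{X}, \mathbb{Z}) \cap \mathcal{W}$, which is dense in $\mathcal{W}$; hence it contains $\mathcal{W}$, that is, $\mathcal{W} \perp_B A$ with $A \neq 0$. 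This contradicts anti-coproximinality of $\mathcal{W}$ and completes the contrapositive. The only point requiring care is the transfer of the $ASE$ property from $\mathbb{L}(\mathbb{X}, \mathbb{Z})$ to $\mathbb{L}(\mathbb{X}, \mathbb{Y})$, so that Theorem \ref{BSP} may legitimately be invoked on $T$; the remainder is a routine density-and-closedness passage.
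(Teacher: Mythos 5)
Your proposal is correct and follows essentially the same route as the paper's proof: the forward direction via Theorem \ref{operator:anti}, and the converse by building the rank-one operator $A(x)=f(x)y_0$ from a witness $\mathbb{Z}\perp_B y_0$, invoking Theorem \ref{BSP} on the dense set $ASE(\mathbb{X},\mathbb{Z})\cap\mathcal{W}$, and passing to all of $\mathcal{W}$ by density. Your writeup is in fact slightly more careful than the paper's, which compresses into ``one can observe that $\mathcal{W}\perp_B A$'' exactly the points you make explicit --- the norm-closedness of $\{W: W\perp_B A\}$, the transfer of the $ASE$ property from $\mathbb{L}(\mathbb{X},\mathbb{Z})$ to $\mathbb{L}(\mathbb{X},\mathbb{Y})$, and the degenerate cases $f(x_0)=0$ and $T=0$.
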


\begin{proof}
Since the sufficient part follows from Theorem \ref{operator:anti}, we only need to prove the necessary part. Suppose on the contrary that $\mathbb{Z}$ is not anti-coproximinal in $\mathbb{Y}.$ This implies there exists $y_0 \in \mathbb{Y}$ such that $\mathbb{Z} \perp_B y_0.$ Let $f \in S_{\mathbb{X}^*}.$ Define $A: \mathbb{X} \to \mathbb{Y}$ such that $A(x)= f(x) y_0.$ Let $ T \in ASE(\mathbb{X}, \mathbb{Z}) \cap S_{\mathcal{W}}$ and let $M_T= \{ \mu x_0: \mu \in \mathbb{T}\},$ for some $x_0 \in S_{\mathbb{X}}.$ As $\mathbb{Z} \perp_B y_0,$ we get $Tx_0 \perp_B y_0 \implies Tx_0 \perp_B Ax_0.$  Following Theorem \ref{BSP}, we get $T \perp_B A.$ This implies $ASE(\mathbb{X}, \mathbb{Z})  \perp_B A.$ Since $ASE(\mathbb{X}, \mathbb{Z}) \cap \mathcal{W}$ is dense in $\mathcal{W},$ one can  observe that $\mathcal{W} \perp_B A.$ This contradicts $\mathcal{W}$ is anti-coproximinal in $\mathbb{L}(\mathbb{X}, \mathbb{Y}).$
\end{proof}

Next we investigate the relation between strongly anti-coproximinal subspaces of $\mathbb{Y}$ and $\mathbb{L}(\mathbb{X}, \mathbb{Y}).$
\begin{theorem}
Let $\mathbb{X}$ be a Banach space such that $B_{\mathbb{X}}=\overline{co(st\mymathhyphen Exp(B_{\mathbb{X}}))}$  and $\mathbb{Y}$ be any Banach space.  If $\mathbb{Z}$ be strongly anti-coproximinal in $\mathbb{Y}$ then any subspace $\mathcal{W}$ of $\mathbb{L}(\mathbb{X}, \mathbb{Y})$ containing  $\mathcal{F}(\mathbb{X}, \mathbb{Z})$ is strongly  anti-coproximinal in $\mathbb{L}(\mathbb{X}, \mathbb{Y}).$
\end{theorem}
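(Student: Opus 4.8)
The plan is to mimic the proof of Theorem~\ref{operator:anti}, upgrading its exact-orthogonality argument to the $\epsilon$-approximate setting. I would argue by contradiction: suppose $\mathcal{W}$ is not strongly anti-coproximinal, so that $\mathcal{W} \perp_B^\epsilon A$ for some nonzero $A \in \mathbb{L}(\mathbb{X}, \mathbb{Y})$ and some $\epsilon \in [0,1)$. Since $\epsilon$-Birkhoff--James orthogonality is homogeneous in its second slot, I may normalize $\|A\| = 1$, which lets me invoke Theorem~\ref{BSP} directly. The goal is to show $A = 0$, contradicting $\|A\| = 1$ and thereby closing the argument.

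The construction is identical to that in Theorem~\ref{operator:anti}: fix $\widetilde{x} \in st\mymathhyphen Exp(B_{\mathbb{X}})$ with strongly exposing functional $f_{\widetilde{x}}$, and for each $z \in S_{\mathbb{Z}}$ set $T_z(x) = f_{\widetilde{x}}(x) z$. As established there, $T_z \in \mathcal{F}(\mathbb{X}, \mathbb{Z}) \subset \mathcal{W}$ is an absolutely strongly exposing operator with $M_{T_z} = \{\mu\widetilde{x}: \mu \in \mathbb{T}\}$ and $T_z\widetilde{x} = z$; since the norms of images agree in $\mathbb{Z}$ and $\mathbb{Y}$, we also have $T_z \in ASE(\mathbb{X}, \mathbb{Y})$, so Theorem~\ref{BSP} applies in $\mathbb{L}(\mathbb{X},\mathbb{Y})$. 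Because $T_z \perp_B^\epsilon A$, that theorem yields a functional $y^* \in J(T_z\widetilde{x}) = J(z)$ with $|y^*(A\widetilde{x})| \leq \epsilon$.

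The crucial step is to convert this absolute bound into a genuine $\epsilon'$-orthogonality relation in $\mathbb{Y}$ with $\epsilon' < 1$. I claim $\|A\widetilde{x}\| \leq \epsilon$ for every $\widetilde{x} \in st\mymathhyphen Exp(B_{\mathbb{X}})$. Indeed, if $\|A\widetilde{x}\| > \epsilon$ for some such $\widetilde{x}$, then $A\widetilde{x} \neq 0$ and $\epsilon' := \epsilon/\|A\widetilde{x}\| < 1$; the inequality $|y^*(A\widetilde{x})| \leq \epsilon = \epsilon'\|A\widetilde{x}\|$ combined with Lemma~\ref{Chem} gives $z \perp_B^{\epsilon'} A\widetilde{x}$ for every $z \in S_{\mathbb{Z}}$. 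Since $\epsilon'$ is independent of $z$, homogeneity in the first slot yields $\mathbb{Z} \perp_B^{\epsilon'} A\widetilde{x}$ with $\epsilon' \in [0,1)$, contradicting the strong anti-coproximinality of $\mathbb{Z}$ in $\mathbb{Y}$. Hence the claimed bound holds on all strongly exposed points.

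Finally, I would propagate the bound to the whole ball. For any finite convex combination $x = \sum_i t_i \widetilde{x}_i$ of strongly exposed points, the triangle inequality gives $\|Ax\| \leq \sum_i t_i \|A\widetilde{x}_i\| \leq \epsilon$; continuity of $A$ extends this to $\|Ax\| \leq \epsilon$ for all $x \in \overline{co(st\mymathhyphen Exp(B_{\mathbb{X}}))} = B_{\mathbb{X}}$, so $\|A\| \leq \epsilon < 1$, contradicting $\|A\| = 1$. Thus $A = 0$, and $\mathcal{W}$ is strongly anti-coproximinal. I expect the main obstacle to be the conversion carried out in the third paragraph: the bound delivered by Theorem~\ref{BSP} is measured against the fixed absolute constant $\epsilon$ rather than a multiple of $\|A\widetilde{x}\|$, so rescaling it into a single $\epsilon' < 1$ valid for all $z$ simultaneously is precisely where the strong hypothesis on $\mathbb{Z}$ must be engaged, and where the argument genuinely departs from the exact-orthogonality proof of Theorem~\ref{operator:anti}.
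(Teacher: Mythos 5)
Your proposal is correct and follows the same skeleton as the paper's proof: the same rank-one operators $T_z = f_{\widetilde{x}}(\cdot)\,z$, the same verification that $T_z$ is absolutely strongly exposing with $M_{T_z} = \{\mu\widetilde{x} : \mu \in \mathbb{T}\}$, the same appeal to Theorem \ref{BSP}, and the same use of $B_{\mathbb{X}} = \overline{co(st\mymathhyphen Exp(B_{\mathbb{X}}))}$ at the end. Where you differ --- and in fact improve on the paper --- is at the conversion step you flag yourself. The paper asserts that $T_z \perp_B^\epsilon A$ implies $T_z(\widetilde{x}) \perp_B^\epsilon A(\widetilde{x})$ and then invokes the strong anti-coproximinality of $\mathbb{Z}$ to conclude $A(\widetilde{x}) = 0$ exactly. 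But Theorem \ref{BSP}, applied to $A$ of norm one, only yields some $y^* \in J(T_z\widetilde{x}) = J(z)$ with $|y^*(A\widetilde{x})| \leq \epsilon$, an absolute bound, whereas $z \perp_B^\epsilon A\widetilde{x}$ requires, via Lemma \ref{Chem}, the relative bound $|y^*(A\widetilde{x})| \leq \epsilon\|A\widetilde{x}\|$; since $\|A\widetilde{x}\|$ may be much smaller than $\|A\|$, the paper's implication as written goes the wrong way. Your rescaling --- if $\|A\widetilde{x}\| > \epsilon$, set $\epsilon' = \epsilon/\|A\widetilde{x}\| < 1$ (independent of $z$) and deduce $\mathbb{Z} \perp_B^{\epsilon'} A\widetilde{x}$, contradicting the hypothesis on $\mathbb{Z}$ --- repairs exactly this, at the price of obtaining only $\|A\widetilde{x}\| \leq \epsilon$ rather than $A\widetilde{x} = 0$ on the strongly exposed points; your final convexity-and-density step shows this weaker conclusion still suffices, forcing $\|A\| \leq \epsilon < 1$ against the normalization $\|A\| = 1$. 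Two further small points you handle more carefully than the paper: you justify the normalization $\|A\| = 1$ (needed because Theorem \ref{BSP} is stated for $A \in S_{\mathbb{L}(\mathbb{X},\mathbb{Y})}$) by homogeneity of $\perp_B^\epsilon$ in the second slot, and you observe that $T_z \in ASE(\mathbb{X}, \mathbb{Z})$ gives $T_z \in ASE(\mathbb{X}, \mathbb{Y})$ since the defining condition involves only the norms of the images, which is what legitimizes applying Theorem \ref{BSP} to an arbitrary $A \in \mathbb{L}(\mathbb{X}, \mathbb{Y})$. In short, your proof is the paper's argument with one genuine gap filled.
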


\begin{proof}
Let $\mathcal{W} \perp_B^\epsilon A,$ for some $\epsilon \in [0,1).$ Take  $ \widetilde{x} \in st \mymathhyphen Exp(B_{\mathbb{X}})$ and  let $f_{\widetilde{x}} \in B_{\mathbb{X}^*}$ strongly exposes the point $\widetilde{x}.$ Let $z \in S_{\mathbb{Z}}.$  Define $T_z : \mathbb{X} \to \mathbb{Y}$ be such that $ T_z(x) = f_{\widetilde{x}} (x) z ~ \forall x \in \mathbb{X}.$ Clearly, $T_z \in  \mathcal{F}(\mathbb{X}, \mathbb{Y}) \subset \mathcal{W}$ and $\|T_z\|=1.$   Following similar argument as in Theorem \ref{operator:anti}, $T_z \in ASE(\mathbb{X}, \mathbb{Z})$ and $M_{T_z}= \{\mu \widetilde{x}: |\mu|=1\}.$ As for any $z \in S_{\mathbb{Z}},$ $T_z \perp_B^\epsilon A,$ following Theorem \ref{BSP},  $$ T_z (\widetilde{x}) \perp_B^\epsilon A(\widetilde{x}) \implies z \perp_B^\epsilon A(\widetilde{x}) \implies \mathbb{Z} \perp_B^\epsilon A(\widetilde{x}).$$ 
As $\mathbb{Z}$ is strongly anti-coproximinal in $\mathbb{X}$, we get $A(\widetilde{x})=0.$ So, for any $x \in co(st \mymathhyphen Exp(B_{\mathbb{X}}))$ such that $ A(x)=0.$  
Let $x \in B_{\mathbb{X}},$ then there exists $\{x_n\} \subset co(st \mymathhyphen Exp(B_{\mathbb{X}}))$ such that $x_n \to x.$  As $ A(x_n)=0 ~ \forall n \in \mathbb{N}, $  we get $A(x)=0.$ This implies $A=0.$
Therefore, $\mathcal{W}$ is strongly anti-coproximinal in $\mathbb{L}(\mathbb{X}, \mathbb{Y}).$
\end{proof}

\begin{theorem}\label{theorem:ASE}
Let $\mathbb{X}$ be Banach space with  $B_{\mathbb{X}}=\overline{co(st\mymathhyphen Exp(B_{\mathbb{X}}))}$ and  $\mathbb{Y}$ be any Banach space.	Suppose that  $\mathcal{W} \subset \mathbb{L}(\mathbb{X}, \mathbb{Y})$ is a closed   subspace  containing  $\mathcal{F}(\mathbb{X}, \mathbb{Z})$ such that $ASE(\mathbb{X}, \mathbb{Z}) \cap \mathcal{W}$ is dense in $\mathcal{W}.$ Then $\mathcal{W}$ is strongly anti-coproximinal in $\mathbb{L}(\mathbb{X}, \mathbb{Y})$ if and only if  $\mathbb{Z}$ is strongly anti-coproximinal in $\mathbb{Y}$.

\end{theorem}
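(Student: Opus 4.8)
The plan is to prove the two implications separately. The sufficiency — that strong anti-coproximinality of $\mathbb{Z}$ in $\mathbb{Y}$ forces $\mathcal{W}$ to be strongly anti-coproximinal in $\mathbb{L}(\mathbb{X}, \mathbb{Y})$ — is exactly the content of the theorem immediately preceding this one, so nothing new is required there. All the work lies in the converse, which I would establish in contrapositive form: assuming $\mathbb{Z}$ is \emph{not} strongly anti-coproximinal in $\mathbb{Y}$, I would produce a nonzero operator $A$ and an $\epsilon \in [0,1)$ with $\mathcal{W} \perp_B^\epsilon A$, contradicting the strong anti-coproximinality of $\mathcal{W}$.

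Concretely, failure of strong anti-coproximinality of $\mathbb{Z}$ yields a nonzero $y_0 \in \mathbb{Y}$ and an $\epsilon \in [0,1)$ with $\mathbb{Z} \perp_B^\epsilon y_0$; rescaling (using that $\perp_B^\epsilon$ is invariant under scaling of its right-hand vector) I may assume $\|y_0\| = 1$. Fixing any $f \in S_{\mathbb{X}^*}$, I define the rank-one operator $A(x) = f(x)\,y_0$, so that $A \in \mathcal{F}(\mathbb{X}, \mathbb{Y})$ with $\|A\| = 1$ and $A \neq 0$. The core claim is that $T \perp_B^\epsilon A$ for every $T \in ASE(\mathbb{X}, \mathbb{Z}) \cap S_{\mathcal{W}}$. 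For such a $T$, writing $M_T = \{\mu x_0 : \mu \in \mathbb{T}\}$ with $x_0 \in S_{\mathbb{X}}$, one has $Tx_0 \in S_{\mathbb{Z}}$, and since $\mathbb{Z} \perp_B^\epsilon y_0$ we get $Tx_0 \perp_B^\epsilon y_0$ in $\mathbb{Y}$. Lemma \ref{Chem} then supplies a supporting functional $y^* \in J(Tx_0)$ with $|y^*(y_0)| \leq \epsilon$. Evaluating at $Ax_0 = f(x_0)\,y_0$ gives $|y^*(Ax_0)| = |f(x_0)|\,|y^*(y_0)| \leq \epsilon$, so $y^*(Ax_0) \in \Omega \cap \mathcal{D}(\epsilon)$, where $\Omega = \{y^*(Ax_0) : y^* \in J(Tx_0)\}$. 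By Theorem \ref{BSP} this is precisely $T \perp_B^\epsilon A$ (viewing $T$ as an element of $ASE(\mathbb{X}, \mathbb{Y})$, which is legitimate since the relevant norms are inherited from $\mathbb{Y}$).

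It then remains to upgrade this from the dense set $ASE(\mathbb{X}, \mathbb{Z}) \cap \mathcal{W}$ to all of $\mathcal{W}$. Since $\perp_B^\epsilon$ is invariant under positive rescaling of its left-hand argument and $ASE$ is stable under nonzero scalar multiples, the claim extends to every nonzero $T \in ASE(\mathbb{X}, \mathbb{Z}) \cap \mathcal{W}$. Finally, for an arbitrary $W \in \mathcal{W}$ I choose $T_n \in ASE(\mathbb{X}, \mathbb{Z}) \cap \mathcal{W}$ with $T_n \to W$; from $\|T_n + \lambda A\| \geq \|T_n\| - \epsilon\|\lambda A\|$ for all $\lambda$, the continuity of the norm lets me pass to the limit and obtain $\|W + \lambda A\| \geq \|W\| - \epsilon\|\lambda A\|$, i.e. $W \perp_B^\epsilon A$. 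Hence $\mathcal{W} \perp_B^\epsilon A$ with $A \neq 0$, the desired contradiction. The delicate point to get right is the identification step in the middle paragraph: the functional $y^*$ witnessing $Tx_0 \perp_B^\epsilon y_0$ must be recognized as lying in $J(Tx_0)$ so that it feeds directly into the set $\Omega$ of Theorem \ref{BSP}; once this is in place, the density-to-closure passage rests only on the limit-stability of $\perp_B^\epsilon$ and is routine.
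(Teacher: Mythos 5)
Your proposal is correct and follows essentially the same route as the paper's own proof: sufficiency by citing the preceding theorem, and necessity by contradiction via the same rank-one operator $A(x)=f(x)\,y_0$, the application of Theorem \ref{BSP} through Lemma \ref{Chem} at the point $x_0$ with $M_T=\{\mu x_0:\mu\in\mathbb{T}\}$, and the passage from the dense set $ASE(\mathbb{X},\mathbb{Z})\cap\mathcal{W}$ to all of $\mathcal{W}$ by limit-stability of $\perp_B^\epsilon$. The only differences are cosmetic: you make explicit several details the paper leaves implicit, namely the normalization $\|y_0\|=1$ so that $A\in S_{\mathbb{L}(\mathbb{X},\mathbb{Y})}$ as Theorem \ref{BSP} requires, the observation that $ASE(\mathbb{X},\mathbb{Z})\subset ASE(\mathbb{X},\mathbb{Y})$ since $\mathbb{Z}$ carries the inherited norm, and the homogeneity-plus-limit argument behind the density step.
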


\begin{proof}
We only need to prove the necessary part. Suppose on the contrary that $\mathbb{Z}$ is not strongly anti-coproximinal in $\mathbb{Y}.$ This implies there exists $y_0 \in \mathbb{Y}$ and $\epsilon \in [0,1)$ such that $\mathbb{Z} \perp_B^\epsilon y_0.$ Let $f \in S_{\mathbb{X}^*}.$ Define $A: \mathbb{X} \to \mathbb{Y}$ such that $A(x)= f(x) y_0.$ Let $ T \in ASE(\mathbb{X}, \mathbb{Z}) \cap S_{\mathcal{W}}$ and let $M_T= \{ \mu x_0: \mu \in \mathbb{T}\},$ for some $x_0 \in S_{\mathbb{X}}.$ As $\mathbb{Z} \perp_B^\epsilon y_0,$ we get $Tx_0 \perp_B^\epsilon y_0 \implies Tx_0 \perp_B^\epsilon Ax_0.$  Following Theorem \ref{BSP}, we get $T \perp_B^\epsilon A.$ This implies $ASE(\mathbb{X}, \mathbb{Z}) \cap S_{\mathcal{W}} \perp_B^\epsilon A.$ Since $ASE(\mathbb{X}, \mathbb{Z}) \cap \mathcal{W}$ is dense in $\mathcal{W},$ we get $\mathcal{W} \perp_B^\epsilon A.$ This contradicts $\mathcal{W}$ is strongly  anti-coproximinal in $\mathbb{L}(\mathbb{X}, \mathbb{Y}).$
\end{proof}

As $c_0$ is strongly anti-coproximinal in $\ell_{\infty}$ \cite[Cor. 3.18]{SGSP}, applying Theorem \ref{theorem:ASE} we obtain the following result.

\begin{cor}
Let $\mathbb{X}$ be a Banach space such that $ASE(\mathbb{X}, c_0)$ is dense in $\mathbb{L}(\mathbb{X}, c_0)$ and  $B_{\mathbb{X}}=\overline{co(st\mymathhyphen Exp(B_{\mathbb{X}}))}$. Then $ \mathbb{L}(\mathbb{X}, c_0)$ is strongly anti-coproximinal in $\mathbb{L}(\mathbb{X}, \ell_{\infty}).$ In particular, $\mathbb{L}(\ell_p, c_0)$ is strongly anti-coproximinal in $\mathbb{L}(\ell_p, \ell_{\infty}),$ where $1 < p < \infty.$
\end{cor}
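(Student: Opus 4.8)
The plan is to derive the corollary as a direct application of Theorem \ref{theorem:ASE}, instantiated at $\mathbb{Y}=\ell_\infty$, $\mathbb{Z}=c_0$ and $\mathcal{W}=\mathbb{L}(\mathbb{X},c_0)$, together with the fact recorded just above the statement that $c_0$ is strongly anti-coproximinal in $\ell_\infty$ by \cite[Cor. 3.18]{SGSP}. Thus no fresh orthogonality computation is needed; the entire task is to check that the quadruple $(\mathbb{X},\mathbb{Y},\mathbb{Z},\mathcal{W})$ satisfies the hypotheses of that theorem.

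First I would verify the structural hypotheses on $\mathcal{W}=\mathbb{L}(\mathbb{X},c_0)$. Since $c_0$ is a norm-closed subspace of $\ell_\infty$, an operator-norm limit of operators with range in $c_0$ again maps into $c_0$ (evaluate at each $x$ and use that $c_0$ is closed), so $\mathbb{L}(\mathbb{X},c_0)$ is a closed subspace of $\mathbb{L}(\mathbb{X},\ell_\infty)$; it trivially contains $\mathcal{F}(\mathbb{X},c_0)$. Moreover every operator in $ASE(\mathbb{X},c_0)$ already has range in $c_0$, so $ASE(\mathbb{X},c_0)\cap\mathcal{W}=ASE(\mathbb{X},c_0)$, and this is dense in $\mathbb{L}(\mathbb{X},c_0)=\mathcal{W}$ by hypothesis. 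The condition $B_{\mathbb{X}}=\overline{co(st\mymathhyphen Exp(B_{\mathbb{X}}))}$ is assumed outright, and $\mathbb{Z}=c_0$ is strongly anti-coproximinal in $\mathbb{Y}=\ell_\infty$. Theorem \ref{theorem:ASE} then yields at once that $\mathbb{L}(\mathbb{X},c_0)$ is strongly anti-coproximinal in $\mathbb{L}(\mathbb{X},\ell_\infty)$.

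For the \emph{in particular} statement I would specialize $\mathbb{X}=\ell_p$, $1<p<\infty$, and check the two hypotheses on the domain. Since $\ell_p$ is uniformly convex for these $p$ (Clarkson), every unit vector is a strongly exposed point of $B_{\ell_p}$: for $x\in S_{\ell_p}$ and $x^*\in J(x)$, if $x^*(x_n)\to 1$ with $\{x_n\}\subset B_{\ell_p}$ then $\|x_n+x\|\to 2$, and uniform convexity forces $x_n\to x$. Hence $st\mymathhyphen Exp(B_{\ell_p})=S_{\ell_p}$ and $B_{\ell_p}=\overline{co(st\mymathhyphen Exp(B_{\ell_p}))}$ holds trivially. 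The density of $ASE(\ell_p,c_0)$ in $\mathbb{L}(\ell_p,c_0)$ follows from the known density of absolutely strongly exposing operators when the domain is uniformly convex (see \cite{Jung}). With both hypotheses confirmed, the first part applies and gives that $\mathbb{L}(\ell_p,c_0)$ is strongly anti-coproximinal in $\mathbb{L}(\ell_p,\ell_\infty)$.

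The only genuinely non-bookkeeping point is justifying the density of $ASE(\ell_p,c_0)$: this is where an external result (uniform convexity of the domain forcing density of absolutely strongly exposing operators) must be invoked, and I would make sure the cited form of Jung's theorem applies with the range space $c_0$, which is permitted since there the range is an arbitrary Banach space. Everything else reduces to the routine verification that the closed subspace $\mathbb{L}(\mathbb{X},c_0)$ sits inside $\mathbb{L}(\mathbb{X},\ell_\infty)$ exactly in the manner Theorem \ref{theorem:ASE} demands.
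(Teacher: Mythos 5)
Your proposal is correct and matches the paper's approach: the paper derives this corollary exactly by applying Theorem \ref{theorem:ASE} with $\mathbb{Y}=\ell_\infty$, $\mathbb{Z}=c_0$, $\mathcal{W}=\mathbb{L}(\mathbb{X},c_0)$, together with the cited fact that $c_0$ is strongly anti-coproximinal in $\ell_\infty$. The only cosmetic difference is in the $\ell_p$ case, where you justify density of $ASE(\ell_p,c_0)$ via uniform convexity while the paper's surrounding discussion routes it through the Radon--Nikod\'{y}m property (Bourgain's theorem); both are valid since uniform convexity implies RNP.
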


In \cite{Jung}, several conditions are provided under which the set $ASE(\mathbb{X}, \mathbb{Y})$ is dense in a closed subspace of $\mathbb{L}(\mathbb{X}, \mathbb{Y}).$ Whenever $\mathbb{X}$ satisfies Radon-Nikod\'{y}m Property, it follows from  \cite[Th. 5]{B2} that for any given Banach space $\mathbb{Y},$ $ASE(\mathbb{X}, \mathbb{Y})$ is dense in $\mathbb{L}(\mathbb{X}, \mathbb{Y})$. From \cite[p. 121]{DP}, we note that a Banach space $\mathbb{X}$ satisfies the Radon-Nikodym Property if and only if every bounded subset of $\mathbb{X}$ is dentable. Also, in \cite[Th. 9]{P} Phelps proved that  every bounded subset of $\mathbb{X}$ is dentable if and only if every bounded closed convex subset of $\mathbb{X}$ is the closed convex hull of its strongly exposed points. Hence we obtain the following corollary of Theorem \ref{theorem:ASE}.

\begin{cor}
Let $\mathbb{X}$ has Radon-Nikod\'{y}m Property and $\mathbb{Y}$ be any Banach space. Then $\mathbb{L}(\mathbb{X}, \mathbb{Z})$  is (strongly) anti-coproximinal in $\mathbb{L}(\mathbb{X}, \mathbb{Y})$ if and only if $\mathbb{Z}$ is (strongly) anti-coproximinal in $\mathbb{Y}.$ 
\end{cor}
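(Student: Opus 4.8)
The plan is to derive this corollary from Theorem~\ref{theorem:ASE} by verifying that the two structural hypotheses on $\mathbb{X}$ required there, namely $B_{\mathbb{X}}=\overline{co(st\mymathhyphen Exp(B_{\mathbb{X}}))}$ and the density of $ASE(\mathbb{X}, \mathbb{Z}) \cap \mathcal{W}$ in $\mathcal{W}$, both follow automatically from the Radon--Nikod\'ym Property of $\mathbb{X}$. Throughout we take $\mathcal{W} = \mathbb{L}(\mathbb{X}, \mathbb{Z})$, which trivially contains $\mathcal{F}(\mathbb{X}, \mathbb{Z})$ and is closed. Since Theorem~\ref{theorem:ASE} is stated only for the strongly anti-coproximinal case, I would in parallel invoke Theorem~\ref{ASE; anti} for the plain anti-coproximinal case; note that its hypothesis is that $st\mymathhyphen Exp(B_{\mathbb{X}})$ separates $\mathbb{X}^*$, which is a strictly weaker requirement than $B_{\mathbb{X}}=\overline{co(st\mymathhyphen Exp(B_{\mathbb{X}}))}$ and is likewise implied by RNP. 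This is why both parenthetical versions can be handled at once.

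First I would recall the chain of equivalences already assembled in the paragraph preceding the statement. By \cite[p.~121]{DP}, $\mathbb{X}$ has RNP if and only if every bounded subset of $\mathbb{X}$ is dentable, and by \cite[Th.~9]{P} the latter holds if and only if every bounded closed convex subset of $\mathbb{X}$ is the closed convex hull of its strongly exposed points. Applying this last characterization to the particular bounded closed convex set $B_{\mathbb{X}}$ yields exactly $B_{\mathbb{X}}=\overline{co(st\mymathhyphen Exp(B_{\mathbb{X}}))}$. This also gives the separation hypothesis needed for Theorem~\ref{ASE; anti}: if some nonzero $x^* \in \mathbb{X}^*$ annihilated every strongly exposed point of $B_{\mathbb{X}}$, it would annihilate their closed convex hull, hence all of $B_{\mathbb{X}}$, forcing $x^*=0$; thus $st\mymathhyphen Exp(B_{\mathbb{X}})$ separates $\mathbb{X}^*$.

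Next I would supply the density statement for the operator sets. By \cite[Th.~5]{B2}, whenever $\mathbb{X}$ has RNP the set $ASE(\mathbb{X}, \mathbb{Y})$ is dense in $\mathbb{L}(\mathbb{X}, \mathbb{Y})$ for every Banach space $\mathbb{Y}$. Taking the target space to be $\mathbb{Z}$, we obtain that $ASE(\mathbb{X}, \mathbb{Z})$ is dense in $\mathbb{L}(\mathbb{X}, \mathbb{Z}) = \mathcal{W}$, and since $ASE(\mathbb{X}, \mathbb{Z}) \subset \mathcal{W}$ already, this is precisely the density of $ASE(\mathbb{X}, \mathbb{Z}) \cap \mathcal{W}$ in $\mathcal{W}$. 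With both hypotheses of Theorem~\ref{theorem:ASE} (respectively Theorem~\ref{ASE; anti}) now in force, that theorem gives the desired equivalence: $\mathbb{L}(\mathbb{X}, \mathbb{Z})$ is (strongly) anti-coproximinal in $\mathbb{L}(\mathbb{X}, \mathbb{Y})$ if and only if $\mathbb{Z}$ is (strongly) anti-coproximinal in $\mathbb{Y}$.

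There is essentially no analytic obstacle here; the work is entirely in correctly lining up the cited equivalences, so the only point demanding care is a bookkeeping one. One must confirm that applying \cite[Th.~9]{P} to the single set $B_{\mathbb{X}}$ really delivers the precise hypothesis form $B_{\mathbb{X}}=\overline{co(st\mymathhyphen Exp(B_{\mathbb{X}}))}$ used in the operator theorems, and that \cite[Th.~5]{B2} is quoted with the target space specialized to $\mathbb{Z}$ rather than to the ambient $\mathbb{Y}$ — it is density into $\mathbb{L}(\mathbb{X}, \mathbb{Z})$, not $\mathbb{L}(\mathbb{X}, \mathbb{Y})$, that Theorem~\ref{theorem:ASE} requires. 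Once these two identifications are made explicit, the corollary is immediate.
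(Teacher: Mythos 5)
Your proposal is correct and takes essentially the same route as the paper, which obtains the corollary by feeding the two RNP consequences---$B_{\mathbb{X}}=\overline{co(st\mymathhyphen Exp(B_{\mathbb{X}}))}$ via the Davis--Phelps and Phelps equivalences, and the density of $ASE(\mathbb{X},\mathbb{Z})$ in $\mathbb{L}(\mathbb{X},\mathbb{Z})$ via Bourgain's theorem---into Theorem~\ref{theorem:ASE}. Your explicit appeal to Theorem~\ref{ASE; anti} for the plain anti-coproximinal case, with the observation that the closed convex hull property forces $st\mymathhyphen Exp(B_{\mathbb{X}})$ to separate $\mathbb{X}^*$, merely makes precise what the paper's parenthetical formulation leaves implicit.
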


\noindent Another important condition for set $ASE(\mathbb{X}, \mathbb{Y})$ is dense in a closed subspace of $\mathbb{L}(\mathbb{X}, \mathbb{Y})$  is that  $SE(B_\mathbb{X})$ is dense in $S_{\mathbb{X}^*}$ and $\mathbb{Y}$ has $quasi \mymathhyphen \beta$ property \cite{Jung}. A Banach space $\mathbb{Y}$  is said to have property $quasi\mymathhyphen \beta$ \cite{AAP} if there exist
$A = \{y^*_{\lambda} : \lambda \in \Lambda \} \subset S_{\mathbb{Y}^*},$ a mapping $\sigma : A \to S_\mathbb{Y}$, and a function $\rho: A \to \mathbb{R}$ satisfying:
\begin{itemize}
\item [(i)] $y^*_\lambda (\sigma(y^*_\lambda))=1,$ for every $\lambda \in \Lambda,$ 
\item[(ii)] $|z^*(\sigma(y^*))| \leq \rho(y^*) < 1$ whenever $y^*, z^* \in A$ with $y^*= z^*,$
\item[(iii)] for every $e^* \in Ext(B_{\mathbb{Y}^*}),$ there exists a subset $A_{e^*} \subset A$ and $t \in \mathbb{C}$ with $|t| = 1$ such
that $te^* \in \overline{A_{e}^*}^{w^*}$
and $\sup\{\rho(y^*): y^* \in  A_{{e}^*} \} < 1.$
\end{itemize}
Regarding $quasi \mymathhyphen \beta$ property, we recall the following result.
\begin{lemma}\cite[Th. 3.1]{Jung}\label{Jung}
Let $\mathbb{X}, \mathbb{Y}$ be Banach spaces. Suppose that $SE(B_\mathbb{X})$ is dense in $S_{\mathbb{X}^*}$ and that $\mathbb{Y}$
has $quasi \mymathhyphen \beta$ property.  Then, for every closed subspace $\mathcal{W}$ of $\mathbb{L}(\mathbb{X}, \mathbb{Y} )$ containing all
rank one operators, $ASE(\mathbb{X}, \mathbb{Y} )\cap \mathcal{W}$  is dense in $\mathcal{W}$.
\end{lemma}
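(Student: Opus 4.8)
The plan is to prove this as a Bishop--Phelps--Bollob\'as type density theorem via an explicit rank-one perturbation. Normalizing, I would fix $A \in \mathcal{W}$ with $\|A\| = 1$ and $\delta \in (0,1)$, and aim to produce $T \in ASE(\mathbb{X}, \mathbb{Y}) \cap \mathcal{W}$ with $\|T - A\| < \delta$. The first preliminary observation is that the quasi-$\beta$ family norms $\mathbb{Y}$: by condition (iii) every $e^* \in Ext(B_{\mathbb{Y}^*})$ is, up to a unimodular scalar, a weak*-limit of elements of $A = \{y^*_\lambda\}$, and since $B_{\mathbb{Y}^*} = \overline{co}^{w^*}(Ext(B_{\mathbb{Y}^*}))$ by Krein--Milman, one gets $\|y\| = \sup_\lambda |y^*_\lambda(y)|$ for every $y \in \mathbb{Y}$, hence $\|A\| = \sup_\lambda \|y^*_\lambda \circ A\|_{\mathbb{X}^*}$.

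Next I would locate a dominating slot and a strongly exposing functional. Choose $\lambda_0$ with $\|y^*_{\lambda_0} \circ A\| > 1 - \eta$ for a small tolerance $\eta$ to be fixed later, and set $\phi = (y^*_{\lambda_0} \circ A)/\|y^*_{\lambda_0} \circ A\| \in S_{\mathbb{X}^*}$. Because $SE(B_\mathbb{X})$ is dense in $S_{\mathbb{X}^*}$, I can pick a strongly exposing functional $f \in SE(B_\mathbb{X})$ with $\|f - \phi\| < \eta$; let $x_0 \in S_\mathbb{X}$ be the point strongly exposed by $f$. Here condition (iii) of quasi-$\beta$ supplies a subset $A_{e^*} \subset A$ adapted to $\lambda_0$ with $\sup\{\rho(y^*) : y^* \in A_{e^*}\} =: \rho_0 < 1$; this uniform gap is the structural input that will make the $\lambda_0$-channel dominate.

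I would then define the perturbation $T = A + \gamma\, f(\cdot)\, \sigma(y^*_{\lambda_0})$, a rank-one correction in the direction $\sigma(y^*_{\lambda_0}) \in S_\mathbb{Y}$, with $\gamma > 0$ comparable to $\delta$. Since $\mathcal{W}$ is closed and contains all rank-one operators, $T \in \mathcal{W}$, and plainly $\|T - A\| \le \gamma < \delta$. (If a single perturbation does not already force the maximum into the $\lambda_0$-slot, I would instead take $T$ to be the operator-norm limit of a geometrically weighted sequence of such rank-one corrections, in the style of Schachermayer's construction; the series converges in norm and its limit still lies in the closed subspace $\mathcal{W}$.) By construction $y^*_{\lambda_0}(T x_0) = y^*_{\lambda_0}(A x_0) + \gamma f(x_0)$ is forced close to $\|y^*_{\lambda_0} \circ A\| + \gamma$, so the norm of $T$ is essentially realized through $y^*_{\lambda_0}$ at the point $x_0$, giving $M_T = \{\mu x_0 : \mu \in \mathbb{T}\}$.

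The remaining and \textbf{hardest} step is verifying the $ASE$ property itself, namely that $\|T x_n\| \to \|T\|$ forces $\theta_n x_n \to x_0$ for suitable $\theta_n \in \mathbb{T}$. The delicate point is ruling out that the norm is approached through any channel other than $\lambda_0$: one must show that the contribution of all remaining functionals $y^*_\lambda$ (which form a net, not merely a sequence) stays uniformly below $\|T\|$, and this is exactly where I would exploit both the uniform separation $\rho_0 < 1$ from condition (ii)--(iii) and the weak*-approximation of extreme functionals in (iii). Once a norming sequence is shown to satisfy $y^*_{\lambda_0}(T x_n) \to \|T\|$, it follows that $f(\theta_n x_n) \to 1 = f(x_0)$, and the strong-exposing property of $f$ upgrades this to $\theta_n x_n \to x_0$, which is the $ASE$ condition. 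The main obstacle is the bookkeeping that balances the perturbation size $\gamma$, the tolerance $\eta$, and the gap $1 - \rho_0$ so that these domination estimates close simultaneously.
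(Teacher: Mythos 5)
First, a contextual note: the paper does not prove this lemma at all --- it is quoted verbatim from \cite[Th.~3.1]{Jung} (Jung, Mart\'in and Rueda Zoca), whose proof adapts the Acosta--Aguirre--Pay\'a perturbation technique for property quasi-$\beta$ to produce absolutely strongly exposing operators. Your overall strategy (rank-one Bollob\'as-type perturbation in the direction $\sigma(y^*_{\lambda_0})$, a strongly exposing functional $f$ close to the normalized composed functional, and the gap $\rho_0<1$ for channel domination) is indeed the right family of ideas, so the comparison here is against the cited source rather than anything in this paper.

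That said, your proposal has a genuine gap in how it invokes the quasi-$\beta$ structure, and the gap is fatal to the single-perturbation claim. Condition (iii) attaches the set $A_{e^*}$ with $\sup\{\rho(y^*): y^*\in A_{e^*}\}<1$ to an \emph{extreme point} $e^*\in Ext(B_{\mathbb{Y}^*})$, not to an arbitrary index $\lambda_0$. You choose $\lambda_0$ first, merely so that $\|y^*_{\lambda_0}\circ A\|>1-\eta$, and then assert that (iii) ``supplies a subset $A_{e^*}$ adapted to $\lambda_0$''; this reverses the quantifiers. Under quasi-$\beta$ (as opposed to Lindenstrauss' property $\beta$) one may have $\sup_{\mu\neq\lambda}|y^*_\mu(\sigma(y^*_\lambda))|=1$ globally --- the uniform gap holds only within a set $A_{e^*}$ --- so for your $T=A+\gamma f(\cdot)\sigma(y^*_{\lambda_0})$ nothing prevents other channels $\mu$ with $\|y^*_\mu\circ A\|\approx 1$ and $|y^*_\mu(\sigma(y^*_{\lambda_0}))|\approx 1$ from carrying the norm, and the claim $M_T=\{\mu x_0:\mu\in\mathbb{T}\}$ fails. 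The correct order, as in the cited proof, is: first show (using the norming identity and a Krein--Milman/weak$^*$ argument) that a small perturbation of $A$ nearly attains its norm through some extreme point $e^*$, then obtain $A_{e^*}$ and $t$ with $te^*\in\overline{A_{e^*}}^{w^*}$, and only then select the perturbation index $\lambda_0\in A_{e^*}$, so that the gap $\rho_0<1$ actually applies to the competing channels that matter. Moreover, even with the channels ordered correctly, a single rank-one bump does not yield the ASE property; one needs the iterative (series) construction you relegate to a parenthesis, with a sequence of strongly exposing functionals $f_n$ chosen at each stage close to the current composed functional, plus the verification that domination survives the limit and that the limiting functional still strongly exposes. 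Those estimates --- the balancing of $\gamma$, $\eta$, $1-\rho_0$ across infinitely many steps --- are not bookkeeping around the proof; they \emph{are} the proof, and your write-up acknowledges the obligation without discharging it.
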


Now applying Theorem \ref{theorem:ASE} along with Lemma \ref{Jung}, the following result is immediate.

\begin{cor}
Let $\mathbb{X}, \mathbb{Y}$ be Banach spaces such that $SE(B_\mathbb{X})$ is dense in $S_{\mathbb{X}^*}$ and $B_{\mathbb{X}}=\overline{co(st\mymathhyphen Exp(B_{\mathbb{X}}))}.$  Suppose that $\mathbb{Z}$ is a subspace of $\mathbb{Y}$ such that $\mathbb{Z}$ 	has  $quasi \mymathhyphen \beta$ property. Then  any closed subspace $\mathcal{W}\subset \mathbb{L}(\mathbb{X}, \mathbb{Z})$ containing $\mathcal{F}(\mathbb{\mathbb{X}, \mathbb{Z}})$ is  strongly anti-coproximinal in $\mathbb{L}(\mathbb{X}, \mathbb{Y})$ if and only if  $\mathbb{Z}$ is strongly anti-coproximinal in $\mathbb{Y}$.
\end{cor}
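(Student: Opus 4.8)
The plan is to obtain this corollary as a direct consequence of Theorem \ref{theorem:ASE}, with its density hypothesis supplied by Lemma \ref{Jung}. Inspecting the hypotheses of Theorem \ref{theorem:ASE}, everything is already in place except one condition: the requirement that $ASE(\mathbb{X}, \mathbb{Z}) \cap \mathcal{W}$ be dense in $\mathcal{W}$. Indeed, $B_{\mathbb{X}}=\overline{co(st\mymathhyphen Exp(B_{\mathbb{X}}))}$ is assumed, $\mathcal{W}$ is a closed subspace containing $\mathcal{F}(\mathbb{X}, \mathbb{Z})$, and $\mathbb{Y}$ is arbitrary. Hence the whole task reduces to verifying this density and then quoting the theorem.

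First I would record the elementary observation that, since $\mathbb{Z}$ is a subspace of $\mathbb{Y}$, every operator in $\mathbb{L}(\mathbb{X}, \mathbb{Z})$ can be regarded as an element of $\mathbb{L}(\mathbb{X}, \mathbb{Y})$ with the same norm, so $\mathbb{L}(\mathbb{X}, \mathbb{Z})$ embeds isometrically into $\mathbb{L}(\mathbb{X}, \mathbb{Y})$ and $\mathcal{W}$ is simultaneously a closed subspace of both. This dual viewpoint is precisely what allows the two cited results to be composed: Lemma \ref{Jung} is applied \emph{inside} $\mathbb{L}(\mathbb{X}, \mathbb{Z})$, whereas Theorem \ref{theorem:ASE} is read \emph{inside} $\mathbb{L}(\mathbb{X}, \mathbb{Y})$.

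Next I would apply Lemma \ref{Jung} to the pair $(\mathbb{X}, \mathbb{Z})$. Its two structural hypotheses hold by assumption: $SE(B_{\mathbb{X}})$ is dense in $S_{\mathbb{X}^*}$, and $\mathbb{Z}$ has the $quasi\mymathhyphen\beta$ property. Moreover, $\mathcal{W}$ is a closed subspace of $\mathbb{L}(\mathbb{X}, \mathbb{Z})$ that contains $\mathcal{F}(\mathbb{X}, \mathbb{Z})$, and in particular contains every rank one operator from $\mathbb{X}$ into $\mathbb{Z}$; therefore the lemma applies and yields that $ASE(\mathbb{X}, \mathbb{Z}) \cap \mathcal{W}$ is dense in $\mathcal{W}$.

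With the density established, I would invoke Theorem \ref{theorem:ASE} directly: since $B_{\mathbb{X}}=\overline{co(st\mymathhyphen Exp(B_{\mathbb{X}}))}$ and $ASE(\mathbb{X}, \mathbb{Z}) \cap \mathcal{W}$ is dense in $\mathcal{W}$ are now both in hand, the theorem gives that $\mathcal{W}$ is strongly anti-coproximinal in $\mathbb{L}(\mathbb{X}, \mathbb{Y})$ if and only if $\mathbb{Z}$ is strongly anti-coproximinal in $\mathbb{Y}$, which is exactly the assertion. Since each step is a straightforward application of an already-established result, I do not anticipate a genuine obstacle; the only point requiring mild care is the bookkeeping between the two ambient spaces $\mathbb{L}(\mathbb{X}, \mathbb{Z})$ and $\mathbb{L}(\mathbb{X}, \mathbb{Y})$, together with the remark that the phrase \emph{containing all rank one operators} in Lemma \ref{Jung} is automatically guaranteed by the stronger hypothesis $\mathcal{F}(\mathbb{X}, \mathbb{Z}) \subset \mathcal{W}$.
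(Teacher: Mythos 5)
Your proposal is correct and follows exactly the route the paper takes: the paper simply remarks that the corollary is "immediate" by applying Lemma \ref{Jung} (with $SE(B_{\mathbb{X}})$ dense in $S_{\mathbb{X}^*}$ and $\mathbb{Z}$ having the $quasi\mymathhyphen\beta$ property, so that $ASE(\mathbb{X},\mathbb{Z})\cap\mathcal{W}$ is dense in $\mathcal{W}$) together with Theorem \ref{theorem:ASE}. Your additional bookkeeping about viewing $\mathcal{W}$ inside both $\mathbb{L}(\mathbb{X},\mathbb{Z})$ and $\mathbb{L}(\mathbb{X},\mathbb{Y})$, and the remark that $\mathcal{F}(\mathbb{X},\mathbb{Z})\subset\mathcal{W}$ supplies the rank-one operators required by the lemma, merely makes explicit what the paper leaves implicit.
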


\vspace{0.3cm}

We conclude this section  with the a complete description of strongly anti-coproximinal subspaces in the space $\mathbb{L}(\ell_1^n, \mathbb{X})$ and $ \mathbb{K}(\mathbb{X}, C(K))$. It is worth noting that these results follow as a corollary of Theorem \ref{theorem:C(K,X)} and hence the proofs are omitted. Before that we recall these well-known connections from (cf. \cite[Prop. 3.22]{PSS}).
\begin{itemize}
\item The space  $\mathbb{L}(\ell_{1}^n, \mathbb{X})$ is isometrically isomorphic to $\ell_{\infty}^n(\mathbb{X}),$ for any Banach space $\mathbb{X}.$ 

\item  The space  $\mathbb{K}( \mathbb{X}, C(K))$ is isometrically isomorphic to $C(K, \mathbb{X}^*),$ where $K$ is a compact Hausdorff space.

\item  The space $\mathbb{L}( \mathbb{X}, \ell_{\infty}^n)$ is isometrically isomorphic to $\ell_{\infty}^n(\mathbb{X}^*),$
for any Banach space $\mathbb{X}.$ 			
\end{itemize}

Considering the above three connections we mention the following results.
\begin{prop}
Let $\mathbb{X}$ be a Banach space such that $B_{\mathbb{X}^{*}}= \overline{co(w^* \mymathhyphen st \mymathhyphen Exp(B_{\mathbb{X}^{*}}))}^{w^*} .$  Then a subspace $\mathcal{Y}$ is strongly anti-coproximinal in $\mathbb{L}(\ell_1^n, \mathbb{X})$ if and only if for any $k \in \{1, 2, \ldots, n\}$ and for any nonempty weak*-open set  $ V \subset \mathbb{X}^*$ containing a weak*-strongly exposed point of $B_{\mathbb{X}^*}$,  there exists $T \in \mathcal{Y}$ such that 
\begin{itemize}
	\item[(i)]  $M_{T} \cap Ext(B_{\ell_1^n}) = \{ \pm e_k\}$
	\item[(ii)] $J(T(e_k)) \subset V$ or $J(T(-e_k)) \subset V$, 
\end{itemize}
where $e_k =(0, 0, \ldots, \underset{k \mymathhyphen th}{1}, 0, \ldots, 0) \in \ell_{1}^n.$
\end{prop}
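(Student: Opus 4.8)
The plan is to obtain this result as a direct translation of Theorem~\ref{theorem:C(K,X)} through the isometric isomorphism $\mathbb{L}(\ell_1^n, \mathbb{X}) \cong \ell_\infty^n(\mathbb{X})$, which is itself the case $C_0(K,\mathbb{X})$ with $K = \{1, 2, \ldots, n\}$ equipped with the discrete topology. Since $K$ is finite and discrete it is trivially a locally compact normal space, and the hypothesis $B_{\mathbb{X}^*}= \overline{co(w^* \mymathhyphen st \mymathhyphen Exp(B_{\mathbb{X}^*}))}^{w^*}$ is exactly what Theorem~\ref{theorem:C(K,X)} requires, so both conditions of that theorem are met and its equivalence applies verbatim. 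First I would fix the standard isometry $\Phi : \mathbb{L}(\ell_1^n, \mathbb{X}) \to \ell_\infty^n(\mathbb{X})$ sending $T$ to $(T(e_1), \ldots, T(e_n))$, where $\{e_k\}$ is the canonical basis of $\ell_1^n$; this is an isometry because $\|T\| = \sup_{k}\|T(e_k)\|$ for operators on $\ell_1^n$. Under $\Phi$, a subspace $\mathcal{Y}$ is strongly anti-coproximinal in $\mathbb{L}(\ell_1^n,\mathbb{X})$ if and only if $\Phi(\mathcal{Y})$ is strongly anti-coproximinal in $\ell_\infty^n(\mathbb{X})$, since strong anti-coproximinality is defined purely through the norm and approximate Birkhoff--James orthogonality, both preserved by isometric isomorphisms.

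The second step is to identify, for a function $f \in \ell_\infty^n(\mathbb{X})$ viewed as the tuple $(x_1, \ldots, x_n)$ with $x_i = T(e_i)$, the norm-attainment set $M_f$ with the indices $k$ where $\|x_k\| = \max_i \|x_i\| = \|f\|$. Translating the condition from Theorem~\ref{theorem:C(K,X)}(ii): the open sets $U$ of the discrete space $K$ are just subsets of $\{1,\ldots,n\}$, and the minimal nonempty ones are singletons $\{k\}$, so quantifying over all nonempty open $U$ reduces to quantifying over each single index $k$. The requirement $M_f \subset U = \{k\}$ becomes $\|x_k\| > \|x_i\|$ for all $i \neq k$, that is $\|T(e_k)\| > \|T(e_i)\|$; and the requirement $J(f(j)) \subset V$ for all $j \in M_f$ collapses (since $M_f = \{k\}$) to $J(x_k) = J(T(e_k)) \subset V$. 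One subtlety to handle carefully is the role of the extreme points $\pm e_k$ of $B_{\ell_1^n}$: because $M_T$ for an operator on $\ell_1^n$ is determined by where the norm is attained on $S_{\ell_1^n}$, and the norm of $T$ on the unit sphere of $\ell_1^n$ is attained precisely at the vertices, the statement couches the same condition in the language $M_T \cap Ext(B_{\ell_1^n}) = \{\pm e_k\}$ together with $J(T(e_k)) \subset V$ or $J(T(-e_k)) \subset V$. I would verify that $\|T(e_k)\| > \|T(e_i)\|$ for all $i\neq k$ is equivalent to $M_T \cap Ext(B_{\ell_1^n}) = \{\pm e_k\}$, using that $Ext(B_{\ell_1^n}) = \{\pm e_i : 1 \le i \le n\}$ and $T(-e_k) = -T(e_k)$ so that $\|T(e_k)\| = \|T(-e_k)\|$.

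The main obstacle, though minor, is the bookkeeping around the sign ambiguity $\pm e_k$ and the corresponding ``$J(T(e_k)) \subset V$ or $J(T(-e_k)) \subset V$'' disjunction, since the result as phrased in Proposition-form does not distinguish the two signs whereas Corollary~\ref{cor:directsum} (the $\ell_\infty^n(\mathbb{X})$ version) is stated with a definite coordinate $y_k$. I would reconcile these by noting that $J(-x) = -J(x)$ and that $V$ containing a weak*-strongly exposed point is a condition on a weak*-open set; the disjunction simply records that the single surviving norming index $k$ may contribute either $T(e_k)$ or its negative as the norm-attaining vector, and only one of the two corresponding dual faces need land inside $V$. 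Once this identification is made the equivalence is immediate from Theorem~\ref{theorem:C(K,X)}, and as the problem statement itself indicates, the proof can be omitted; my write-up would consist of stating the isometric identification $\mathbb{L}(\ell_1^n, \mathbb{X}) \cong \ell_\infty^n(\mathbb{X}) = C_0(K, \mathbb{X})$ for discrete $K$ of cardinality $n$, invoking Theorem~\ref{theorem:C(K,X)}, and carrying out the translation of conditions described above.
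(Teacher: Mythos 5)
Your proposal follows exactly the route the paper intends: the paper explicitly states that this proposition is a corollary of Theorem~\ref{theorem:C(K,X)} via the isometric identification $\mathbb{L}(\ell_1^n,\mathbb{X})\cong\ell_\infty^n(\mathbb{X})=C_0(K,\mathbb{X})$ with $K$ a discrete $n$-point space (cf.\ Corollary~\ref{cor:directsum}), and omits the proof for precisely that reason. Your translation of the conditions is correct, including the norm-attainment bookkeeping; for the sign disjunction in (ii), the cleanest justification is simply that $\mathcal{Y}$ is a linear subspace, so replacing $T$ by $-T\in\mathcal{Y}$ (which leaves $M_T$ unchanged and sends $J(T(-e_k))$ to $J(T(e_k))$) shows the disjunctive form is equivalent to the definite-coordinate form of Corollary~\ref{cor:directsum}.
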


\begin{prop}
Let $K$ be a compact Hausdorff space. Let $\mathbb{X}$ be a Banach space such that $B_{\mathbb{X}^{**}}= \overline{co(w^* \mymathhyphen st \mymathhyphen Exp(B_{\mathbb{X}^{**}}))}^{w^*} .$ Then a subspace $\mathcal{Y}$ is strongly anti-coproximinal in $\mathbb{K}(\mathbb{X}, C(K))$ if and only if for any nonempty open set $U \subset K$ and for any nonempty weak*-open set  $ V \subset \mathbb{X}^*$ containing a weak*-strongly exposed point of $B_{\mathbb{X}^*}$,  there exists $T \in \mathcal{Y}$ such that 
\begin{itemize}
	\item[(i)]  $M_{T^*} \cap Ext(B_{C(K)^*})\subset \{ \delta_k: k \in U\}$
	
	\item[(ii)] $J(T^*(\delta_k)) \subset V ~\forall \delta_k \in M_{T^*} \cap Ext(B_{C(K)^*}) $ or $J(T^*(-\delta_k)) \subset V ~\forall \delta_k \in M_{T^*} \cap Ext(B_{C(K)^*}) ,$
\end{itemize}
where $\delta_k : C(K) \to \mathbb{K}$ defined as $\delta_k(f)=f(k) ~\forall f \in C(K).$
\end{prop}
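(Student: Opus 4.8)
The plan is to deduce this proposition directly from Theorem \ref{theorem:C(K,X)} by transporting the characterization through the isometric isomorphism $\Phi \colon \mathbb{K}(\mathbb{X}, C(K)) \to C(K, \mathbb{X}^*)$ recorded in \cite[Prop. 3.22]{PSS}; since $K$ is compact, $C(K, \mathbb{X}^*) = C_0(K, \mathbb{X}^*)$. Because strong anti-coproximinality is expressed purely through the norm and $\epsilon$-Birkhoff-James orthogonality, it is preserved by isometric isomorphisms, so $\mathcal{Y}$ is strongly anti-coproximinal in $\mathbb{K}(\mathbb{X}, C(K))$ if and only if $\Phi(\mathcal{Y})$ is strongly anti-coproximinal in $C(K, \mathbb{X}^*)$. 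Applying Theorem \ref{theorem:C(K,X)} with the coefficient space $\mathbb{X}^*$ in place of $\mathbb{X}$ requires precisely the stated hypothesis $B_{\mathbb{X}^{**}} = \overline{co(w^*\mymathhyphen st \mymathhyphen Exp(B_{\mathbb{X}^{**}}))}^{w^*}$, since the dual of $\mathbb{X}^*$ is $\mathbb{X}^{**}$. Accordingly, conditions (i) and (ii) are to be read as the faithful translation of the clauses ``$M_f \subset U$'' and ``$J(f(k)) \subset V$'' of Theorem \ref{theorem:C(K,X)}, where the relevant weak*-open set $V$ and the weak*-strongly exposed points live in the bidual $\mathbb{X}^{**}$ (the dual of the coefficient space $\mathbb{X}^*$), which is exactly why the closed-convex-hull hypothesis is imposed on $B_{\mathbb{X}^{**}}$.

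To carry out the translation I would first make $\Phi$ explicit: for $T \in \mathbb{K}(\mathbb{X}, C(K))$ set $f_T := \Phi(T) \in C(K, \mathbb{X}^*)$, where $f_T(k)$ is the functional $x \mapsto (Tx)(k)$; equivalently $f_T(k) = T^*\delta_k$, since $T^*\delta_k(x) = \delta_k(Tx) = (Tx)(k)$. A direct computation then gives $\|T\| = \sup_{x \in B_{\mathbb{X}}} \sup_{k} |(Tx)(k)| = \sup_{k} \|T^*\delta_k\| = \|f_T\|$, so $\Phi$ is norm-preserving and $M_{f_T} = \{k \in K : \|T^*\delta_k\| = \|T\|\}$. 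Recalling the Arens--Kelley description $Ext(B_{C(K)^*}) = \{\lambda \delta_k : k \in K,\ |\lambda| = 1\}$ together with the identity $\|T^*(\lambda\delta_k)\| = |\lambda|\,\|T^*\delta_k\| = \|T^*\delta_k\|$, I would show that a unimodular multiple $\lambda\delta_k$ lies in the norm-attainment set $M_{T^*}$ of $T^*$ exactly when $k \in M_{f_T}$. Hence $M_{f_T} \subset U$ is equivalent to the requirement that every extreme point of $B_{C(K)^*}$ in $M_{T^*}$ be supported at a point of $U$, which is condition (i).

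For the second condition, note that $f_T(k) \in \mathbb{X}^*$, so its duality image lies in $\mathbb{X}^{**}$ and $J(f_T(k)) = J(T^*\delta_k)$; thus ``$J(f(k)) \subset V$ for all $k \in M_f$'' reads $J(T^*\delta_k) \subset V$ for all $k \in M_{f_T}$. The \emph{or} in (ii) arises from the sign (resp. unimodular phase) attached to the extreme points $\delta_k$ combined with the freedom to replace the witness $f_T$ by $-f_T$, which again belongs to the subspace $\mathcal{Y}$: since $J(T^*(-\delta_k)) = J(-f_T(k)) = -J(f_T(k))$, requiring $J(T^*\delta_k) \subset V$ for all $k$ (witnessed by $f_T$) \emph{or} $J(T^*(-\delta_k)) \subset V$ for all $k$ (witnessed by $-f_T$) is exactly the single-witness condition of the theorem applied to $f_T$ or to $-f_T$. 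Matching these translations in both directions of the biconditional of Theorem \ref{theorem:C(K,X)} yields the asserted equivalence.

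The main obstacle I anticipate is the careful bookkeeping of this sign/phase correspondence: one must verify that the theorem's condition on a \emph{single} function $f$ (with the non-symmetric weak*-open set $V$ pinning down the sign) transforms into the disjunctive condition (ii), rather than into the impossible demand that both $J(T^*\delta_k)$ and $-J(T^*\delta_k)$ lie in $V$. Establishing the precise identification $M_{T^*} \cap Ext(B_{C(K)^*}) \leftrightarrow M_{f_T}$ through the extreme-point structure of $B_{C(K)^*}$, and confirming that $V$ is to be taken in the bidual $\mathbb{X}^{**}$ so that the hypothesis on $B_{\mathbb{X}^{**}}$ is the one invoked, are the only genuinely delicate points; the remaining verifications are routine consequences of the isometry $\Phi$ and of Theorem \ref{theorem:C(K,X)}.
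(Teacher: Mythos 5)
Your proposal follows exactly the route the paper intends: the paper omits the proof of this proposition, stating explicitly that it ``follows as a corollary of Theorem \ref{theorem:C(K,X)}'' via the isometric identification $\mathbb{K}(\mathbb{X}, C(K)) \cong C(K, \mathbb{X}^*)$ recorded from \cite[Prop.~3.22]{PSS}, and your transport of strong anti-coproximinality through that isometry, the identification $f_T(k) = T^*\delta_k$ with the Arens--Kelley description of $Ext(B_{C(K)^*})$, and the sign bookkeeping giving the disjunction in (ii) are precisely the omitted verifications. Your observation that $V$ and the weak*-strongly exposed points must live in $\mathbb{X}^{**}$ (the dual of the coefficient space $\mathbb{X}^*$), which is why the hypothesis is imposed on $B_{\mathbb{X}^{**}}$, correctly resolves what appears to be a typographical slip in the paper's statement ($V \subset \mathbb{X}^*$).
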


In the next proposition we formulate the above result in the space $\mathbb{L}(\mathbb{X}, \ell_{\infty}^n).$ As $(\ell_{\infty}^n)^*=\ell_{1}^n,$ for an operator $T \in \mathbb{L}(\mathbb{X}, \ell_{\infty}^n),$ we consider $T^* \in \mathbb{L}(\ell_{1}^n, \mathbb{X}^*).$

\begin{prop}
Let $\mathbb{X}$ be a Banach space such that $B_{\mathbb{X}^{**}}= \overline{co(w^* \mymathhyphen st \mymathhyphen Exp(B_{\mathbb{X}^{**}}))}^{w^*}.$   Then a subspace $\mathcal{Y}$ is strongly anti-coproximinal in $\mathbb{L}( \mathbb{X}, \ell_\infty^n)$ if and only if for any $k \in \{1, 2, \ldots, n\}$ and for any nonempty weak*-open set  $ V \subset \mathbb{X}^*$ containing a weak*-strongly exposed point of $B_{\mathbb{X}^*}$,,  there exists $T \in \mathcal{Y}$ such that

\begin{itemize}
	\item[(i)]  $M_{T^*} \cap Ext(B_{\ell_1^n}) = \{ \pm e_k\}$
	
	\item[(ii)]   $J(T^*(e_k)) \subset V$ or $J(T^*(-e_k)) \subset V$,
\end{itemize}
where $e_k =(0, 0, \ldots, \underset{k \mymathhyphen th}{1}, 0, \ldots, 0) \in \ell_{1}^n.$
\end{prop}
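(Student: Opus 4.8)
The plan is to deduce this proposition from Corollary \ref{cor:directsum} by transporting the problem through the isometric isomorphism $\mathbb{L}(\mathbb{X}, \ell_\infty^n) \cong \ell_\infty^n(\mathbb{X}^*)$ recorded above. Since strong anti-coproximinality is formulated purely through the norm and $\epsilon$-Birkhoff--James orthogonality, it is invariant under isometric isomorphisms; thus it will suffice to characterise it in $\ell_\infty^n(\mathbb{X}^*)$ by Corollary \ref{cor:directsum}, applied with $\mathbb{X}^*$ as the underlying Banach space, and then to rewrite the resulting conditions in terms of $T$ and $T^*$.

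First I would make the identification explicit. For $T \in \mathbb{L}(\mathbb{X}, \ell_\infty^n)$, writing $T_i = T^*(e_i) \in \mathbb{X}^*$ for the $i$-th coordinate functional gives $Tx = (T_1 x, \dots, T_n x)$, and the correspondence $\Phi(T) = (T_1, \dots, T_n)$ is the isometric isomorphism onto $\ell_\infty^n(\mathbb{X}^*)$, since $\|T\| = \max_i \|T_i\|$. As $\Phi$ is a linear isometry, it carries $\mathcal{Y}$ onto a subspace $\Phi(\mathcal{Y})$ of $\ell_\infty^n(\mathbb{X}^*)$, and $\mathcal{Y}$ is strongly anti-coproximinal in $\mathbb{L}(\mathbb{X}, \ell_\infty^n)$ if and only if $\Phi(\mathcal{Y})$ is strongly anti-coproximinal in $\ell_\infty^n(\mathbb{X}^*)$. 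Applying Corollary \ref{cor:directsum} with $\mathbb{X}^*$ in place of $\mathbb{X}$ --- whose hypothesis reads $B_{(\mathbb{X}^*)^*} = \overline{co(w^*\mymathhyphen st\mymathhyphen Exp(B_{(\mathbb{X}^*)^*}))}^{w^*}$, i.e.\ exactly the assumption $B_{\mathbb{X}^{**}} = \overline{co(w^*\mymathhyphen st\mymathhyphen Exp(B_{\mathbb{X}^{**}}))}^{w^*}$ --- would yield that $\Phi(\mathcal{Y})$ is strongly anti-coproximinal precisely when, for each $k$ and each nonempty weak*-open $V$ containing a weak*-strongly exposed point of $B_{\mathbb{X}^{**}}$, there is some $(y_1, \dots, y_n) \in \Phi(\mathcal{Y})$ with $\|y_k\| > \|y_i\|$ for all $i \neq k$ and $J(y_k) \subset V$.

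It then remains to rewrite these two conditions through $\Phi$, using $y_i = T^*(e_i)$. For condition (i), I would use that $\|T^*\| = \|T\| = \max_i \|y_i\|$ is attained at an extreme point of $B_{\ell_1^n}$ and that $\|T^*(\pm e_i)\| = \|y_i\|$; this makes $\|y_k\| > \|y_i\|$ for all $i \neq k$ equivalent to $\pm e_k \in M_{T^*}$ together with $\pm e_i \notin M_{T^*}$ for $i \neq k$, that is, to $M_{T^*} \cap Ext(B_{\ell_1^n}) = \{\pm e_k\}$. For condition (ii), since $y_k = T^*(e_k)$, the requirement $J(y_k) \subset V$ is $J(T^*(e_k)) \subset V$; and because $\Phi(\mathcal{Y})$ is a subspace one may replace $(y_1, \dots, y_n)$ by its negative without disturbing condition (i), which converts the requirement into $J(-y_k) = J(T^*(-e_k)) \subset V$. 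Hence the existence assertion is exactly ``$J(T^*(e_k)) \subset V$ or $J(T^*(-e_k)) \subset V$'', and combining this with the previous equivalence delivers the stated characterisation.

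The main point requiring care is the dualization bookkeeping. Because the base space supplied to Corollary \ref{cor:directsum} is $\mathbb{X}^*$, its dual is $(\mathbb{X}^*)^* = \mathbb{X}^{**}$, so the relevant weak*-strongly exposed points are those of $B_{\mathbb{X}^{**}}$ (consistent with the hypothesis) and the open sets $V$ are to be read as weak*-open subsets of $\mathbb{X}^{**}$; correspondingly, for $y_k \in \mathbb{X}^*$ the set $J(y_k)$ lives in $\mathbb{X}^{**}$, which is where condition (ii) takes place. The only remaining routine verification is the extreme-point/norm-attainment correspondence for $\ell_1^n$ invoked in (i); over $\mathbb{C}$ one replaces $Ext(B_{\ell_1^n})$ by $\bigcup_i \{\mu e_i : |\mu| = 1\}$ and reads (i) as $M_{T^*} \cap Ext(B_{\ell_1^n}) = \{\mu e_k : |\mu| = 1\}$, with the argument otherwise unchanged.
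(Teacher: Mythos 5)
Your proposal is correct and takes exactly the route the paper intends: the paper omits the proof, noting that these propositions follow from Theorem \ref{theorem:C(K,X)} via the recorded isometric isomorphism $\mathbb{L}(\mathbb{X}, \ell_\infty^n) \cong \ell_\infty^n(\mathbb{X}^*)$, which is precisely your application of Corollary \ref{cor:directsum} with $\mathbb{X}^*$ as the base space, together with the routine translations $y_i = T^*(e_i)$, the equivalence of $\|y_k\| > \|y_i\|$ $(i \neq k)$ with $M_{T^*} \cap Ext(B_{\ell_1^n}) = \{\pm e_k\}$, and the use of $-T \in \mathcal{Y}$ to absorb the disjunction in (ii). Your dualization remark is also well taken: for the types to match, the set $V$ in the statement must indeed be read as a weak*-open subset of $\mathbb{X}^{**}$ containing a point of $w^* \mymathhyphen st \mymathhyphen Exp(B_{\mathbb{X}^{**}})$ (which is where $J(T^*(e_k))$ lives), consistent with the stated hypothesis on $B_{\mathbb{X}^{**}}$.
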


\section{Anti-coproximinality in general Banach space}

In this section, our objective is to study anti-coproximinal and strongly anti-coproximinal subspaces in the setting of general Banach spaces. To this end, we introduce the concept of selection maps, which serves as a key tool in providing a complete characterization of anti-coproximinal, strongly anti-coproximinal, coproximinal and co-Chebyshev subspaces. We begin with the following definition.

\begin{definition} Let $\mathbb{Y}$ be a subspace of a Banach space $\mathbb{X}.$

\begin{enumerate}
	\item  We say $\psi$ is a selection map of $\mathbb{Y},$ which is defined as $\psi : \mathbb{Y} \to \mathbb{X}^*,$ where $\psi(y)=f_y,$ for some $f_y \in J(y)$ and $f_{\mu y}= \overline{\mu} f_y,$ for every $\mu \in \mathbb{C}, y \in \mathbb{Y}.$
	\item We define $\Lambda_\mathbb{Y}$ as the collection of all the selection maps of $\mathbb{Y},$ i.e., $$\Lambda_\mathbb{Y} = \{\psi: \mathbb{Y} \to \mathbb{X}^* :\, \psi\, \text{is a selection map of $\mathbb{Y}$}\}.$$
	\item Suppose that $\eta \in \Lambda_\mathbb{Y}.$ Then $\eta$ is said to be a minimal selection map of $\mathbb{Y}$ if $Im~\eta \subset Im~\psi,$ for all $\psi \in \Lambda_\mathbb{Y}.$
	
\end{enumerate}
\end{definition}
We would like to note that in case of smooth Banach space $\mathbb{X},$ if $\mathbb{Y}$ is a subspace of $\mathbb{X}$ then $ \Lambda_\mathbb{Y}$ is singleton. Let us now observe some basic and easy properties of this selection map.

\begin{prop}\label{intersection}
Let $\mathbb{Y}$ be a subspace of $\mathbb{X}.$
\begin{itemize}
	\item [(i)] Suppose that $\phi \in \Lambda_{\mathbb{Y}}.$ Then $\mathbb{Y} \cap (^\perp Im~\phi)= \{0\}.$
	\item[(ii)] Let $dim ~\mathbb{Y}=n.$ Then for any $\phi \in \Lambda_\mathbb{Y},$ $dim~span(Im~\phi) \geq n. $
\end{itemize}
\end{prop}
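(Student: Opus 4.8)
The plan is to establish (i) directly from the defining property of supporting functionals, and then to deduce (ii) from (i) by a finite-dimensional duality argument carried out through the restriction map onto $\mathbb{Y}^*$.

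For (i), I would take an arbitrary $z \in \mathbb{Y} \cap (^\perp Im~\phi)$ and show $z = 0$. Since $z \in \mathbb{Y}$, its image $\phi(z) = f_z$ lies in $Im~\phi$, where $f_z \in J(z)$. On the other hand, $z \in {}^\perp Im~\phi$ forces every functional in $Im~\phi$ to vanish at $z$; applying this to $f_z$ itself yields $f_z(z) = 0$. But $f_z \in J(z)$ means $f_z(z) = \|z\|$, so $\|z\| = 0$ and hence $z = 0$. This uses nothing beyond the definition of $J(\cdot)$ and is a one-line computation; the scaling condition $f_{\mu y} = \overline{\mu} f_y$ from the definition of a selection map is not needed here.

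For (ii), the key observation is that (i) says precisely that $Im~\phi$ separates the points of $\mathbb{Y}$: for every nonzero $y \in \mathbb{Y}$ there is some $x^* \in Im~\phi$ with $x^*(y) \neq 0$. I would then consider the linear restriction map $R : span(Im~\phi) \to \mathbb{Y}^*$ defined by $R(x^*) = x^*|_{\mathbb{Y}}$. By the separation just noted, the image $R(span(Im~\phi))$ is a subspace of $\mathbb{Y}^*$ that separates the points of $\mathbb{Y}$. Since $\mathbb{Y}$ is finite-dimensional with $dim~\mathbb{Y} = n$, standard finite-dimensional duality (a point-separating subspace of $\mathbb{Y}^*$ has trivial pre-annihilator in $\mathbb{Y}$, hence equals all of $\mathbb{Y}^*$) gives $R(span(Im~\phi)) = \mathbb{Y}^*$, which has dimension $n$. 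Finally, a linear map cannot have rank exceeding the dimension of its domain, so picking functionals in $span(Im~\phi)$ whose restrictions form a basis of $\mathbb{Y}^*$ produces $n$ linearly independent elements of $span(Im~\phi)$; therefore $dim~span(Im~\phi) \geq n$.

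I expect no genuine obstacle in either part. The argument for (i) is immediate, and the only mildly non-routine point in (ii) is the reduction through the restriction map together with the fact that a subspace of $\mathbb{Y}^*$ separating the points of the finite-dimensional space $\mathbb{Y}$ must be the whole dual. The one place to be careful is that $span(Im~\phi)$ could a priori be infinite-dimensional, so rather than invoking rank–nullity as an equality I would argue directly that a surjection onto the $n$-dimensional space $\mathbb{Y}^*$ forces the domain to contain $n$ linearly independent vectors, which is exactly what the stated lower bound requires.
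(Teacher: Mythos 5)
Your proof is correct, and while part (i) coincides with the paper's argument, part (ii) takes a genuinely different route. For (i), the clash between $f_z(z)=\|z\|$ (from $f_z\in J(z)$) and $f_z(z)=0$ (from $z\in{}^\perp Im~\phi$) is exactly the paper's contradiction, just phrased directly rather than by contradiction. For (ii), the paper works inside the ambient space $\mathbb{X}$: assuming $dim~span(Im~\phi)=p<n$ with basis $S=\{f_1,\ldots,f_p\}$, it shows ${}^\perp S=\bigcap_{i=1}^p\ker f_i$ has codimension at most $p<n$ in $\mathbb{X}$, so a dimension count forces $\mathbb{Y}\cap{}^\perp S\neq\{0\}$, and any nonzero $w$ there is contradicted via $\phi(w)\in J(w)$ just as in (i). You instead pass to the finite-dimensional dual: the restriction map $R:span(Im~\phi)\to\mathbb{Y}^*$ has image with trivial pre-annihilator in $\mathbb{Y}$ by (i), hence equal to all of $\mathbb{Y}^*$, and lifting a basis of $\mathbb{Y}^*$ yields $n$ linearly independent elements of $span(Im~\phi)$. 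These are dual dimension counts resting on the same use of the selection map, but yours buys three things: it is direct rather than by contradiction; it sidesteps the paper's somewhat informal construction of vectors $x_1,\ldots,x_p$ with the decomposition $x=\sum_{i=1}^p\alpha_i x_i+h$, which really requires a biorthogonal choice of the $x_i$ relative to the $f_i$ (a point the paper glosses over); and it gives the marginally stronger conclusion that the restrictions of $span(Im~\phi)$ exhaust $\mathbb{Y}^*$, not merely the dimension bound. Your closing caution that $span(Im~\phi)$ may a priori be infinite-dimensional, so one should extract independent preimages rather than invoke rank--nullity as an equality, is well placed and handled correctly.
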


\begin{proof}
(i) If possible let $(\neq 0)y\in \mathbb{Y}\cap { ^\perp Im~\phi}.$ Then there exists $f \in Im~\phi$ such that $f(y)=\|y\|\neq 0.$ But $y\in {^\perp Im~\phi }$ implies that $f (y)=0$, which is a contradiction.\\

(ii) Suppose on the contrary that there exists $\phi \in \Lambda_\mathbb{Y}$ such that $dim ~span(Im~\phi) =p < n.$ Let $S= \{f_1, f_2, \ldots, $ $ f_p\} \in Im~\phi$ be a basis of $span(Im~\phi).$ First we  claim that $codim(^\perp S) < n.$ Let  $x_i \in \mathbb{X} \setminus \ker f_i,$ for any $1 \leq i \leq p$ such that $\{ x_1, x_2, \ldots, x_p\}$ is linearly independent. Let $x \in \mathbb{X}$. It is immediate that $x$ can be written as $x= \sum_{i=1}^p \alpha_i x_i + h,$ for some $\alpha_i \in \mathbb{K}$ and $h\in \cap_{i=1}^p\ker f_i.$ As $ ^\perp S = \cap_{i=1}^p\ker f_i$, $codim (^{\perp}S) \leq p <n.$ Clearly, $\mathbb{Y} \cap {^\perp S} \neq \{0\}.$ Suppose that $w (\neq 0) \in \mathbb{Y} \cap {^\perp S}.$ Let $\phi(w)= f \in J(w).$ As $ w \in \mathbb{Y},$ $f \in Im~\phi \subset span~S.$ But $w \in {^\perp S},$ this implies $f(w)=0.$ This is a contradiction, which completes the proof.
\end{proof}

\begin{prop}\label{prop:minimal}
Let $\mathbb{Y}$ be a subspace of a Banach space $\mathbb{X}.$ Suppose that $\phi \in \Lambda_{\mathbb{Y}}$ is a minimal selection map of $\mathbb{Y}.$ Then   for any $ f \in Im~\phi,$ there exists $y \in \mathbb{Y}$ such that $Im~ \phi \cap J(y) = \{f\}.$ 
\end{prop}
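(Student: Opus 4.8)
The plan is to argue by contraposition. I will show that if $\phi \in \Lambda_{\mathbb{Y}}$ and $f \in Im~\phi$ are such that $Im~\phi \cap J(y) \neq \{f\}$ for \emph{every} $y \in \mathbb{Y}$, then $\phi$ cannot be minimal. Since $\phi$ is assumed minimal, this immediately forces, for each $f \in Im~\phi$, the existence of some $y$ with $Im~\phi \cap J(y) = \{f\}$. To break minimality I will construct a selection map $\psi \in \Lambda_{\mathbb{Y}}$ whose image omits $f$; then $f \in Im~\phi \setminus Im~\psi$ witnesses $Im~\phi \not\subset Im~\psi$, contradicting the defining property of a minimal selection map.

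Before constructing $\psi$, I would record two elementary facts, where I write $\mathbb{T}f := \{\mu f : \mu \in \mathbb{T}\}$ and $Z_f := \{y \in \mathbb{Y} : f \in J(y)\}$. \textbf{(a)} If a nonzero $y$ has $\phi(y) = \mu f \in \mathbb{T}f$, then by conjugate-homogeneity $\phi(\mu y) = \overline{\mu}\,\phi(y) = |\mu|^2 f = f$, so the $\mathbb{T}$-orbit of $y$ contains a point $w = \mu y$ with $\phi(w) = f$; this $w$ is the unique such point on the orbit, since $\phi(\nu w) = \overline{\nu} f = f$ forces $\nu = 1$. As $\phi(w) = f$ gives $f \in J(w)$, we have $w \in Z_f$, and $Z_f$ is nonempty because $f = \phi(y_0)$ for some $y_0$. \textbf{(b)} If $y \in Z_f$ and $g \in J(y)$ with $g \neq f$, then $g \notin \mathbb{T}f$: writing $g = \mu f$ would give $\|y\| = g(y) = \mu f(y) = \mu\|y\|$, hence $\mu = 1$ and $g = f$, a contradiction.

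Now I would build $\psi$ orbit by orbit. On each $\mathbb{T}$-orbit on which $\phi$ takes no value in $\mathbb{T}f$, I simply keep $\psi = \phi$. On each orbit that \emph{does} meet $\mathbb{T}f$, fact (a) supplies a unique representative $w$ with $\phi(w) = f$ (and distinct such representatives lie on distinct orbits). Since $w \in Z_f$, the standing hypothesis $Im~\phi \cap J(w) \neq \{f\}$ together with $f \in Im~\phi \cap J(w)$ yields some $g_w \in Im~\phi \cap J(w)$ with $g_w \neq f$; by (b), $g_w \notin \mathbb{T}f$. I then set $\psi(w) = g_w$ and extend conjugate-homogeneously, $\psi(\mu w) = \overline{\mu}\,g_w$. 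This is a legitimate selection map: $g_w \in J(w)$ and $J(\mu w) = \overline{\mu}\,J(w)$ give $\psi(\mu w) \in J(\mu w)$, and the extension respects the rule $\psi(\mu y) = \overline{\mu}\,\psi(y)$ by construction, the independent choices at the representatives being consistent because those representatives occupy pairwise distinct orbits.

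Finally I would verify $f \notin Im~\psi$. On a rerouted orbit the values of $\psi$ lie in $\mathbb{T}g_w$, which is disjoint from $\mathbb{T}f$ by (b), so no value equals $f$; on the untouched orbits $\psi = \phi$ takes values outside $\mathbb{T}f$, hence $\neq f$. Thus $f \notin Im~\psi$, and $\phi$ is not minimal, completing the contrapositive. I expect the only genuine subtlety to be the bookkeeping in step (b) and in extending the rerouted choices compatibly with the conjugate-homogeneity condition, so that the \emph{entire} orbit $\mathbb{T}f$ — not merely the single functional $f$ — is avoided; everything else follows directly from the definitions of $J$ and of a selection map.
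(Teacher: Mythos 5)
Your proof is correct and takes essentially the same route as the paper: assuming some $f \in Im~\phi$ admits no $y$ with $Im~\phi \cap J(y) = \{f\}$, you construct a new selection map whose image omits $f$, contradicting the minimality of $\phi$. If anything, your version is more careful than the paper's, which makes the same construction but glosses over the $\mathbb{T}$-orbit consistency forced by conjugate-homogeneity and the need to avoid the whole orbit $\mathbb{T}f$ rather than just the single functional $f$ --- precisely the points your facts (a) and (b) settle.
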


\begin{proof}
Suppose on the contrary assume that $f_0 \in Im ~\phi,$ such that for any $y \in \mathbb{Y},$ $Im~ \phi \cap (J(y) \setminus\{f_0\} ) \neq \emptyset.$  This implies that for any $y \in \mathbb{Y},$ there exists $f \in Im~ \phi \setminus\{f_0\}$ such that $f \in J(y).$
Consider $\eta: \mathbb{Y} \to \mathbb{X}^*$ such that $\eta (y)= f,$ for some $f \in Im~\eta \setminus \{f_0\}$ and $\eta (\mu y)= \overline{\mu} \eta(y), ~\forall \mu \in \mathbb{C}.$
Therefore, $\eta \in \Lambda_{\mathbb{Y}}$  and $ Im~\eta \subseteqq Im~\phi \setminus \{f_0\},$ which contradicts the minimality of $\phi.$ 
\end{proof}

We next present a  characterization of anti-coproximinal subspaces in a general Banach space, in terms of this newly introduced selection maps.

\begin{theorem}\label{anti-coproximinal}
Let $\mathbb{Y}$ be a subspace of a Banach space $\mathbb{X}.$ Then $\mathbb{Y}$ is anti-coproximinal in $\mathbb{X}$ if and only if $\overline{span (Im~\phi)}^{w^*} =\mathbb{X}^*,~ \forall \phi \in \Lambda_\mathbb{Y}.$
\end{theorem}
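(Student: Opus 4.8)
The plan is to pass from the stated weak*-density condition to a statement about annihilators via the bipolar theorem, and then to translate anti-coproximinality into the existence of a selection map whose image fails to separate some nonzero vector, using James' Lemma (Lemma \ref{James}).

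First I would record a reformulation. For any $\phi \in \Lambda_{\mathbb{Y}}$, the set $span(Im~\phi)$ is a linear subspace of $\mathbb{X}^*$, so the bipolar theorem yields $\overline{span(Im~\phi)}^{w^*} = \big({}^\perp(span~Im~\phi)\big)^\perp$; moreover the annihilator of a set equals the annihilator of its linear span, so ${}^\perp(span~Im~\phi) = {}^\perp(Im~\phi)$. Since $S^\perp = \mathbb{X}^*$ holds precisely when $S = \{0\}$, we obtain the equivalence $\overline{span(Im~\phi)}^{w^*} = \mathbb{X}^*$ if and only if ${}^\perp(Im~\phi) = \{0\}$. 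Thus the theorem reduces to proving that $\mathbb{Y}$ is anti-coproximinal in $\mathbb{X}$ if and only if ${}^\perp(Im~\phi) = \{0\}$ for every $\phi \in \Lambda_{\mathbb{Y}}$.

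Next I would dispose of the direction that is immediate from James' Lemma. Suppose some $\phi \in \Lambda_{\mathbb{Y}}$ satisfies ${}^\perp(Im~\phi) \neq \{0\}$, and pick a nonzero $x \in {}^\perp(Im~\phi)$. Then for every $y \in \mathbb{Y}$ we have $\phi(y) \in J(y)$ with $\phi(y)(x) = 0$, so Lemma \ref{James} gives $y \perp_B x$. Hence $\mathbb{Y} \perp_B x$ for the nonzero vector $x$, and $\mathbb{Y}$ is not anti-coproximinal. This establishes, in contrapositive form, the implication from the annihilator condition to anti-coproximinality.

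For the converse I would argue by contraposition and construct the offending selection map. Assume $\mathbb{Y}$ is not anti-coproximinal, so there is a nonzero $x \in \mathbb{X}$ with $\mathbb{Y} \perp_B x$, that is, $y \perp_B x$ for every $y \in \mathbb{Y}$. By Lemma \ref{James}, each $y \in \mathbb{Y}$ admits $f_y \in J(y)$ with $f_y(x) = 0$. To assemble these choices into a genuine element of $\Lambda_{\mathbb{Y}}$, I would select one representative from each scalar ray $\{\mu y : \mu \in \mathbb{K}\}$ in $\mathbb{Y} \setminus \{0\}$, assign to it the functional provided by James' Lemma, extend over the ray by the required homogeneity $\phi(\mu y) = \overline{\mu} f_y$ (rescaled to remain in $S_{\mathbb{X}^*}$), and set $\phi(0) = 0$. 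Since every value $\phi(y)$ is a scalar multiple of some $f_{y_0}$ with $f_{y_0}(x) = 0$, we get $\phi(y)(x) = 0$ for all $y$, whence $x \in {}^\perp(Im~\phi)$ and ${}^\perp(Im~\phi) \neq \{0\}$. Combining the two contrapositive implications yields the stated equivalence.

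The routine ingredients are the bipolar reformulation and the repeated use of James' Lemma. The one place demanding care, and the main obstacle, is the construction in the last paragraph: one must verify that the ray-by-ray choice produces a well-defined map satisfying the homogeneity axiom of a selection map, and that this homogeneity is compatible with the annihilation $f_y(x) = 0$, which it is, since any scalar multiple of a functional vanishing at $x$ again vanishes at $x$.
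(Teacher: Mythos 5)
Your proposal is correct and follows essentially the same route as the paper: both directions reduce, via the identity $\overline{span(Im~\phi)}^{w^*}=({}^\perp Im~\phi)^\perp$, to the statement that ${}^\perp(Im~\phi)=\{0\}$ for every $\phi\in\Lambda_{\mathbb{Y}}$, and then apply Lemma \ref{James} together with the ray-by-ray construction of a selection map annihilating the offending vector $x$. The only difference is cosmetic --- where the paper invokes the quotient duality $({}^\perp Im~\phi)^*=\mathbb{X}^*/\overline{span(Im~\phi)}^{w^*}$, you use the bipolar theorem plus Hahn--Banach directly, and your explicit check that the homogeneous extension is well defined (with the unimodular rescaling keeping values in $S_{\mathbb{X}^*}$) is if anything slightly more careful than the paper's.
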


\begin{proof}
First we prove the sufficient part. Suppose on the contrary that $\mathbb{Y}$ is not anti-coproximinal in $\mathbb{X}.$ This implies there exists $x \in \mathbb{X} \setminus \mathbb{Y}$ such that $\mathbb{Y} \perp_B x.$ Therefore, following Lemma \ref{James}  for every $y \in \mathbb{Y}$  there exists $ f_y \in J(y)$ such that $x \in \ker f_y.$ Let us define $ \phi: \mathbb{Y} \to \mathbb{X}^*$ as $\phi(y) = f_y$ and  $ \phi(\mu y)= \overline{\mu} f_y, ~\forall \mu \in \mathbb{C}.$ Clearly, $Im~\phi= \{f_y: y \in \mathbb{Y},  \phi(\mu y)= \overline{\mu} f_y, ~\forall \mu \in \mathbb{C} \}$ and $\phi \in \Lambda_{\mathbb{Y}}.$ Observe that $^\perp Im~\phi = \cap_{y\in \mathbb{Y}} \ker f_y.$ Clearly, $x \in {^\perp Im~\phi}.$
From \cite[Th. 4.7]{R}, we obtain that $$(^\perp Im~\phi)^\perp = \overline{span(Im~\phi)}^{w^*}.$$ Following \cite[Th. 1.10.16]{M} we get  $$(^\perp Im~\phi)^* = \mathbb{X}^*/\overline{span(Im~\phi)}^{w^*}.$$ As $x(\neq \theta) \in {^\perp Im~\phi},$ we obtain $\overline{span(Im~\phi)}^{w^*} \subsetneq \mathbb{X}^*.$ This  contradiction  proves the sufficient part of the theorem. 

Let us now prove the necessary part. Suppose on the contrary that there exists a $\phi \in \Lambda_{\mathbb{Y}}$ such that  $\overline{span (Im~\phi)}^{w^*} \subsetneq \mathbb{X}^*.$ Suppose that $ Im~\phi = \{ f_y: y \in \mathbb{Y}, f_y \in J(y)\}.$ Since $(^\perp Im~\phi)^* = \mathbb{X}^*/\overline{span(Im~\phi)}^{w^*},$ it follows that $(^\perp Im~\phi)^* \neq \{0\}.$ This implies that $^\perp Im~\phi \neq \{0\}.$ Let us consider $x (\neq 0) \in {^\perp Im~\phi}.$ Since $ {^\perp Im~\phi} = \cap_{y\in \mathbb{Y}} \ker f_y,$ $x \in  \cap_{y\in \mathbb{Y}} \ker f_y.$ Therefore, following Lemma \ref{James} it is easy to observe that $\mathbb{Y} \perp_B x.$ This contradicts 
the fact that $\mathbb{Y}$ is anti-coproximinal. This proves  the theorem.
\end{proof}

For a smooth Banach space $\mathbb{X}$, $ \Lambda_{\mathbb{Y}}$ is singleton for any subspace $\mathbb{Y} $ of $\mathbb{X}.$ Therefore, \cite[Th. 2.6]{SSGP3} is a immediate corollary of the above theorem. Note that in this case $\mathcal{J}_{\mathbb{Y}}= Im~\phi$ and $ \Lambda_\mathbb{Y}=\{\phi\},$ where $\mathcal{J}_{\mathbb{Y}}= \{ f \in S_{\mathbb{X}^*}: f \in J(y), y \in Sm_{\mathbb{X}} \cap S_{\mathbb{Y}}\}.$

\begin{cor}
Let $\mathbb{Y}$ be a subspace of a smooth space $\mathbb{X}.$  Then the followings are equivalent: 
\begin{itemize}
	\item[(i)] $\mathbb{Y}$ is anti-coproximinal in $\mathbb{X}.$
	\item[(ii)] $\overline{span (Im~\phi)}^{w^*} =\mathbb{X}^*,$~ where $\Lambda_\mathbb{Y}= \{ \phi\}.$ 
	\item[(iii)] $\overline{span \{ f \in S_{\mathbb{X}^*}: f(y) = \|y\|\}}^{w^*}= \mathbb{X}^*.$
\end{itemize}
\end{cor}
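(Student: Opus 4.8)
The plan is to derive this as a direct specialization of Theorem~\ref{anti-coproximinal} to the smooth setting, where the geometry collapses the selection-map formalism into something transparent. The crucial observation is that when $\mathbb{X}$ is smooth, every nonzero $y \in \mathbb{X}$ has a \emph{unique} supporting functional, so $J(y)$ is a singleton for each $y \in S_{\mathbb{Y}}$. Consequently, there is essentially only one way to define a selection map: for each $y \in \mathbb{Y}$ we are forced to set $\phi(y) = f_y$ where $\{f_y\} = J(y)$ (with the homogeneity condition $\phi(\mu y) = \overline{\mu} f_y$ determining the scalar multiples). This means $\Lambda_{\mathbb{Y}} = \{\phi\}$ is genuinely a singleton, which is exactly the remark made just before the corollary.

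With that in hand, the equivalence $(i) \iff (ii)$ is immediate from Theorem~\ref{anti-coproximinal}: that theorem states $\mathbb{Y}$ is anti-coproximinal if and only if $\overline{span(Im~\phi)}^{w^*} = \mathbb{X}^*$ for \emph{all} $\phi \in \Lambda_{\mathbb{Y}}$, and since here there is only one such $\phi$, the universal quantifier becomes vacuous and we recover precisely statement $(ii)$. So the real content is the bridge $(ii) \iff (iii)$, which amounts to identifying $Im~\phi$ concretely. First I would observe that $Im~\phi$ consists exactly of the functionals (and their scalar multiples) supporting the points of $\mathbb{Y}$; explicitly, for $y \in S_{\mathbb{Y}}$ the unique element of $J(y)$ is the unique $f \in S_{\mathbb{X}^*}$ with $f(y) = \|y\| = 1$. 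Thus the set $\{f \in S_{\mathbb{X}^*} : f(y) = \|y\|,\ y \in S_{\mathbb{Y}}\}$ appearing in $(iii)$ coincides, up to scalar multiples, with $Im~\phi$.

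To make $(ii) \iff (iii)$ rigorous I would argue that $span(Im~\phi) = span\{f \in S_{\mathbb{X}^*} : f(y) = \|y\|\}$. The containment in one direction is clear since each $f_y$ arising from a unit vector $y$ is of the displayed form. For the reverse, note that any $f \in S_{\mathbb{X}^*}$ with $f(y) = \|y\|$ for some $y$ is, by smoothness and uniqueness of supporting functionals, exactly the image $\phi(y/\|y\|)$ up to normalization, hence lies in $Im~\phi$. Since the two generating sets differ only by nonzero scalars, their linear spans agree, and therefore their weak*-closures agree: $\overline{span(Im~\phi)}^{w^*} = \overline{span\{f \in S_{\mathbb{X}^*} : f(y) = \|y\|\}}^{w^*}$. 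Combining this with the equivalence $(i) \iff (ii)$ yields all three equivalences.

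I do not anticipate a serious obstacle here, since the corollary is a clean unpacking of an already-proved theorem; the only point requiring care is the bookkeeping around scalar multiples and the homogeneity convention $\phi(\mu y) = \overline{\mu} f_y$, so that $Im~\phi$ is correctly identified with the supporting functionals of \emph{all} of $\mathbb{Y}$ (not just its unit sphere) before spans are taken. One should verify that restricting attention to $S_{\mathbb{Y}}$ in $(iii)$ loses nothing, which it does not, precisely because spanning re-introduces the scalar multiples that the homogeneity condition builds into $Im~\phi$. Once that identification is recorded, both equivalences follow formally, and the proof is complete.
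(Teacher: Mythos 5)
Your proposal is correct and matches the paper's treatment: the paper likewise observes that smoothness forces $\Lambda_{\mathbb{Y}}$ to be a singleton, so (i) $\iff$ (ii) drops out of Theorem \ref{anti-coproximinal}, and (ii) $\iff$ (iii) follows from identifying $Im~\phi$ with the set of supporting functionals of points of $\mathbb{Y}$ (the paper's set $\mathcal{J}_{\mathbb{Y}}$), which differs from the generating set in (iii) only by scalar multiples and hence has the same weak*-closed span. Your bookkeeping about the homogeneity convention $\phi(\mu y)=\overline{\mu}f_y$ is exactly the point the paper leaves implicit when it asserts $\mathcal{J}_{\mathbb{Y}}=Im~\phi$.
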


Whenever $\mathbb{X}$ is finite-dimensional Theorem \ref{anti-coproximinal} can be expressed as follows.

\begin{cor}
Let $\mathbb{Y}$ be a subspace of an $n$-dimensional Banach space $\mathbb{X}.$ Then $\mathbb{Y}$ is anti-coproximinal in $\mathbb{X}$ if and only if $ dim~span (Im~\phi) = n,~ \forall \phi \in \Lambda_\mathbb{Y}.$
\end{cor}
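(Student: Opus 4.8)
The plan is to derive this corollary directly from Theorem~\ref{anti-coproximinal}, whose statement says that $\mathbb{Y}$ is anti-coproximinal in $\mathbb{X}$ if and only if $\overline{span(Im~\phi)}^{w^*} = \mathbb{X}^*$ for all $\phi \in \Lambda_\mathbb{Y}$. Since $\mathbb{X}$ is now finite-dimensional, the weak*-topology on $\mathbb{X}^*$ coincides with the norm topology, and $\mathbb{X}^*$ is itself $n$-dimensional. The first step is to observe that in a finite-dimensional space every linear subspace is closed, so the weak*-closure in the statement is superfluous: $\overline{span(Im~\phi)}^{w^*} = span(Im~\phi)$ for each $\phi \in \Lambda_\mathbb{Y}$.

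Next I would translate the condition ``$span(Im~\phi) = \mathbb{X}^*$'' into a statement about dimension. Because $\dim \mathbb{X}^* = \dim \mathbb{X} = n$ and $span(Im~\phi)$ is a subspace of the $n$-dimensional space $\mathbb{X}^*$, the equality $span(Im~\phi) = \mathbb{X}^*$ holds precisely when $\dim span(Im~\phi) = n$. This is the elementary fact that a subspace of an $n$-dimensional vector space equals the whole space if and only if it has dimension $n$.

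Combining these two observations, $\mathbb{Y}$ is anti-coproximinal in $\mathbb{X}$ if and only if $span(Im~\phi) = \mathbb{X}^*$ for all $\phi \in \Lambda_\mathbb{Y}$, which by the dimension count is equivalent to $\dim span(Im~\phi) = n$ for all $\phi \in \Lambda_\mathbb{Y}$. This is exactly the asserted characterization. I do not anticipate any genuine obstacle here, since the corollary is a routine finite-dimensional specialization of the preceding theorem; the only point meriting a sentence of care is the vanishing of the weak*-closure, which is immediate from finite-dimensionality. One may also note in passing (using Proposition~\ref{intersection}(ii)) that the inequality $\dim span(Im~\phi) \geq n$ would fail to be automatic, since that proposition only guarantees $\dim span(Im~\phi) \geq \dim \mathbb{Y}$; the content of the corollary is that anti-coproximinality forces the larger dimension $n$.
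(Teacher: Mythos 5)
Your proposal is correct and takes essentially the same route as the paper, which states this corollary without proof as an immediate finite-dimensional specialization of Theorem~\ref{anti-coproximinal}; your two observations (the weak*-closure is superfluous since every subspace of the finite-dimensional $\mathbb{X}^*$ is closed, and a subspace of the $n$-dimensional dual equals $\mathbb{X}^*$ precisely when its dimension is $n$) are exactly the intended reasoning.
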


We next present a characterization of strongly anti-coproximinal subspaces of a general Banach space.

\begin{theorem}
Let $\mathbb{X}$ be a Banach space such that $B_{\mathbb{X}^*}= \overline{co(w^* \mymathhyphen st \mymathhyphen Exp(B_{\mathbb{X}^*}))}^{w^*}.$  Then a closed proper subspace $\mathbb{Y}$ is strongly anti-coproximinal in $\mathbb{X}$ if and only if for every $\phi \in \Lambda_{\mathbb{Y}},$ $ w^* \mymathhyphen str \mymathhyphen Exp(B_{\mathbb{X}^*})\subset \overline{Im~\phi}^{w^*}.$
\end{theorem}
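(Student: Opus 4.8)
The plan is to prove both implications in contrapositive form, organised around a single weak*-closed set attached to a witness vector. After normalising $\|x\|=1$ (legitimate, since $\perp_B^\epsilon$ is homogeneous in its second slot), the relation $\mathbb{Y}\perp_B^\epsilon x$ says, via Lemma \ref{Chem}, that each $y\in\mathbb{Y}$ carries a supporting functional $\phi(y)\in J(y)$ with $|\phi(y)(x)|\le\epsilon$; that is, the chosen functionals all lie in $H_\epsilon:=\{f\in\mathbb{X}^*:|f(x)|\le\epsilon\}$. The three facts I would record first are: (a) $H_\epsilon$ is weak*-closed, being the preimage of the (closed) disc $\mathcal{D}(\epsilon)$ under the weak*-continuous evaluation $f\mapsto f(x)$; (b) $H_\epsilon$ is invariant under multiplication by unimodular scalars, which matches the symmetry $\phi(\mu y)=\overline{\mu}\,\phi(y)$ carried by any selection map; and (c) a point $x_0^*\in w^*\mymathhyphen st\mymathhyphen Exp(B_{\mathbb{X}^*})$ exposed by $x\in S_{\mathbb{X}}$ has $x_0^*(x)=1$, hence lies outside $H_\epsilon$ whenever $\epsilon<1$.

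For the necessity direction I would argue the contrapositive: suppose the functional condition fails, so some $\phi\in\Lambda_{\mathbb{Y}}$ and some $x_0^*\in w^*\mymathhyphen st\mymathhyphen Exp(B_{\mathbb{X}^*})$, exposed by $x\in S_{\mathbb{X}}$, satisfy $x_0^*\notin\overline{Im~\phi}^{w^*}$. Put $\epsilon:=\sup\{|f(x)|:f\in Im~\phi\}\le 1$. The key sub-step is $\epsilon<1$: if $\epsilon=1$, pick $f_n\in Im~\phi$ with $|f_n(x)|\to 1$ and unimodular $\theta_n$ with $(\theta_n f_n)(x)=|f_n(x)|\to 1=x_0^*(x)$; since $Im~\phi$ is closed under unimodular multiplication, $\theta_n f_n=\phi(\overline{\theta_n}y_n)\in Im~\phi$, and the weak*-strong exposedness of $x_0^*$ by $x$ forces $\theta_n f_n\xrightarrow{w^*}x_0^*$, giving $x_0^*\in\overline{Im~\phi}^{w^*}$, a contradiction. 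With $\epsilon<1$, every $\phi(y)\in J(y)$ obeys $|\phi(y)(x)|\le\epsilon$, so Lemma \ref{Chem} yields $y\perp_B^\epsilon x$ for all $y$, i.e. $\mathbb{Y}\perp_B^\epsilon x$ with $x\ne 0$ and $\epsilon\in[0,1)$; thus $\mathbb{Y}$ is not strongly anti-coproximinal.

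For the sufficiency direction I would again use contraposition: assume $\mathbb{Y}$ is not strongly anti-coproximinal, so there are $x\ne 0$ (normalised to $\|x\|=1$) and $\epsilon\in[0,1)$ with $\mathbb{Y}\perp_B^\epsilon x$. By Lemma \ref{Chem} each $J(y)\cap H_\epsilon$ is nonempty, and the family $\{J(y)\cap H_\epsilon\}_y$ is compatible with unimodular homogeneity because $J(\mu y)=\overline{\mu}J(y)$ and $H_\epsilon$ is unimodular-invariant; choosing functionals along a transversal of the complex rays of $\mathbb{Y}$ produces a selection map $\phi\in\Lambda_{\mathbb{Y}}$ with $Im~\phi\subset H_\epsilon$. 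Since $H_\epsilon$ is weak*-closed, $\overline{Im~\phi}^{w^*}\subset H_\epsilon$. Finally I would invoke Lemma \ref{lemma:kadet} with $\delta=1-\epsilon>0$ applied to any $x_0^*\in J(x)$: it furnishes $x^*\in w^*\mymathhyphen st\mymathhyphen Exp(B_{\mathbb{X}^*})$ with $\Re[x^*(x)]>1-\delta=\epsilon$, so $|x^*(x)|>\epsilon$ and $x^*\notin H_\epsilon\supset\overline{Im~\phi}^{w^*}$. Hence this $\phi$ violates the functional condition, which is what we wanted.

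The routine ingredients are the weak*-topology of $H_\epsilon$ and the dictionary between $\perp_B^\epsilon$ and supporting functionals. The genuinely delicate step is the construction of $\phi$ in the sufficiency direction: one must pick the functionals $\phi(y)\in J(y)\cap H_\epsilon$ \emph{coherently} so that $\phi(\mu y)=\overline{\mu}\,\phi(y)$ holds, which forces a choice over a transversal of the rays together with the verification that $H_\epsilon$ is stable under unimodular rotation (so the constraint propagates consistently along each ray). The other non-formal input is Lemma \ref{lemma:kadet}, which is precisely what converts the standing hypothesis $B_{\mathbb{X}^*}=\overline{co(w^*\mymathhyphen st\mymathhyphen Exp(B_{\mathbb{X}^*}))}^{w^*}$ into the existence of a weak*-strongly exposed functional nearly norming $x$; without this hypothesis there may be no exposed point available to separate $x_0^*$ from the slab $H_\epsilon$, and the equivalence can break down.
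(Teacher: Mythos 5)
Your proof is correct, and the necessity direction coincides with the paper's argument step for step: the same supremum $\epsilon=\sup\{|f(x)|:f\in Im~\phi\}$, the same unimodular-rotation trick exploiting $\phi(\mu y)=\overline{\mu}\phi(y)$ to show $\epsilon<1$, and the same appeal to Lemma \ref{Chem} to conclude $\mathbb{Y}\perp_B^\epsilon x$. The sufficiency direction, however, ends differently. The paper, after building the selection map $\phi$ with $|f_y(x)|\le\epsilon$ exactly as you do, uses the standing hypothesis $B_{\mathbb{X}^*}=\overline{co(w^*\mymathhyphen st\mymathhyphen Exp(B_{\mathbb{X}^*}))}^{w^*}$ directly: assuming $w^*\mymathhyphen st\mymathhyphen Exp(B_{\mathbb{X}^*})\subset\overline{Im~\phi}^{w^*}$, it bounds $|g(x)|\le\epsilon$ for every $g\in B_{\mathbb{X}^*}$ via a net in the convex hull and reaches the contradiction $\|x\|\le\epsilon<1=\|x\|$. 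You instead invoke Lemma \ref{lemma:kadet} with $\delta=1-\epsilon$ to produce a single weak*-strongly exposed functional $x^*$ with $\Re[x^*(x)]>\epsilon$, hence lying outside the weak*-closed slab $H_\epsilon\supset\overline{Im~\phi}^{w^*}$, which exhibits the failure of the inclusion constructively rather than by a norm contradiction. The two routes are logically equivalent (Lemma \ref{lemma:kadet} is itself a consequence of the same hypothesis, and the paper uses it this way in Theorem \ref{theorem:C(K,X)}), but your packaging buys two small advantages: it reuses a lemma already available instead of redoing the convex-hull computation, and your observation that $H_\epsilon$ is weak*-closed replaces the paper's step ``$f\in\overline{Im~\phi}^{w^*}$, so there exists a sequence $f_{y_n}\xrightarrow{w^*}f$'' --- which, as written, tacitly assumes sequential adequacy of the weak* topology --- with an argument that is valid for nets automatically. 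The paper's version, in exchange, is self-contained at that point and makes the role of the convex-hull hypothesis visible in the final display.
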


\begin{proof}
First we prove the necessary part. Suppose on the contrary that there exists  $\phi \in \Lambda_{\mathbb{Y}}$ such that $ w^* \mymathhyphen st \mymathhyphen Exp(B_{\mathbb{X}^*}) \not\subset \overline{Im~\phi}^{w^*}.$ Let $g \in  w^* \mymathhyphen st \mymathhyphen Exp(B_{\mathbb{X}^*}) \setminus \overline{Im~\phi}^{w^*}.$ As $g \in w^* \mymathhyphen st \mymathhyphen Exp(B_{\mathbb{X}^*}),$ suppose that there exists $w \in S_{\mathbb{X}}$ such that $g_n (w) \longrightarrow g(w) \implies g_n \xrightarrow{\enskip w^* \enskip} g,$ for any sequence $\{g_n\} \subset S_{\mathbb{X}^*}.$   Let $$\epsilon= \sup \{ |f(w)|: f \in Im ~ \phi \}.$$ Observe that $\epsilon < 1.$ Otherwise if $\epsilon=1,$ then there exists $ \{f_n\} \subset  Im ~\phi$ such that $ |f_n(w)| \to 1. $ This implies $ \theta_n f_n(w) \to 1$, for some $\theta_n \in \mathbb{T}.$ Therefore, $$  \psi(w)( \theta_n f_n) \to 1 \implies \theta_n f_n \xrightarrow{\enskip w^* \enskip} g.$$ As $\theta_n f_n \in Im~\phi,$ we get $g \in \overline{Im~\phi}^{w^*},$ which is a contradiction.
We now show that $\mathbb{Y} \perp_B^\epsilon w.$ Let $y \in \mathbb{Y}$ and  $f \in J(y) \cap Im~\phi.$   Clearly, $|f(w)| \leq \epsilon= \epsilon \|w\|.$ Following Lemma \ref{Chem}, $y \perp_B^\epsilon w,$ consequently, $\mathbb{Y} \perp_B^\epsilon w.$ This contradicts that $\mathbb{Y}$ is strongly anti-coproximinal.

Let us now prove the sufficient part. Suppose on the contrary that $\mathbb{Y}$ is not strongly anti-coproximinal. This implies there exists an $\epsilon \in [0,1)$ and $x \in S_{\mathbb{X}}$ such that $\mathbb{Y} \perp_B^\epsilon x.$ Therefore, for any $y \in \mathbb{Y}$ there exists $f_y \in J(y)$ such that $|f_y(x)| \leq \epsilon.$ 
Let us define $ \phi: \mathbb{Y} \to \mathbb{X}^*$ as $\phi(y) = f_y$ and  $ \phi(\mu y)= \overline{\mu} f_y, ~\forall \mu \in \mathbb{C}.$
Clearly, 
\[Im~\phi= \{f_y: y \in \mathbb{Y},  \phi(\mu y)= \overline{\mu} f_y, ~\forall \mu \in \mathbb{C} \}\]
and
$\phi \in \Lambda_{\mathbb{Y}}.$  Let $f \in \overline{Im~\phi}^{w^*}.$ So, there exists $f_{y_n} \in Im~\phi$ such that $f_{y_n} \xrightarrow{\enskip w^* \enskip} f.$ As $| f_{y_n} (x) | \leq \epsilon$ for any $n \in \mathbb{N}$, we get $|f(x)| \leq           \epsilon.$  
From hypothesis $ w^* \mymathhyphen st \mymathhyphen Exp(B_{\mathbb{X}^*})\subset \overline{Im~\phi}^{w^*}.$
So, for any $f' \in  w^* \mymathhyphen st \mymathhyphen Exp(B_{\mathbb{X}^*}),$ $|f'(x)| \leq \epsilon.$
Let $g \in B_{\mathbb{X}^*}.$ As $B_{\mathbb{X}^*}= \overline{co(w^* \mymathhyphen st \mymathhyphen Exp(B_{\mathbb{X}^*}))}^{w^*},$ there exists a net $ \{g_\alpha\} \subset co( w^* \mymathhyphen str \mymathhyphen Exp(B_{\mathbb{X}^*}))$ such that $g_\alpha \overset{w^*}{\longrightarrow} g.$ So, $g_{\alpha}(x) \to g(x).$ Clearly, for any $\alpha,$ $|g_{\alpha} (x)|\leq \epsilon$ and therefore $|g(x)| \leq \epsilon .$ Consequently, $$\|x\|= \sup \{ |g(x)|: g \in B_{\mathbb{X}^*}\} \leq \epsilon < 1=\|x\|. $$ This contradiction completes the proof. 
\end{proof}

For a finite-dimensional Banach space, the above theorem can be expressed as follows.

\begin{cor}
Let $\mathbb{Y}$ be a proper subspace of a finite-dimensional Banach space $\mathbb{X}$. Then $\mathbb{Y}$ is strongly anti-coproximinal if and only if $Exp(B_{\mathbb{X}^*}) \subset \overline{Im~\phi}, ~\forall\phi \in \Lambda_\mathbb{Y}.$
\end{cor}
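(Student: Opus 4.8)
The plan is to deduce this corollary directly from the preceding characterization theorem by checking that, when $\dim\mathbb{X}<\infty$, all of its hypotheses are automatically satisfied and its conclusion simplifies to the stated form. The whole argument rests on three elementary finite-dimensional facts, which I would establish in turn; note first that every subspace of a finite-dimensional space is closed, so the requirement that $\mathbb{Y}$ be a closed proper subspace reduces to $\mathbb{Y}$ being proper.

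First, since $\dim\mathbb{X}<\infty$ forces $\dim\mathbb{X}^*<\infty$, the weak* topology on $\mathbb{X}^*$ coincides with the norm topology. Consequently every weak*-closure appearing in the theorem is simply a norm-closure; in particular $\overline{Im~\phi}^{w^*}=\overline{Im~\phi}$ for each $\phi\in\Lambda_{\mathbb{Y}}$, and $\overline{co(\,\cdot\,)}^{w^*}=\overline{co(\,\cdot\,)}$.

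Second, I would show $w^*\mymathhyphen st\mymathhyphen Exp(B_{\mathbb{X}^*})=Exp(B_{\mathbb{X}^*})$. Because $\mathbb{X}^{**}=\mathbb{X}$ canonically in finite dimensions, the exposing functionals drawn from the predual $\mathbb{X}$ are exactly those drawn from $(\mathbb{X}^*)^*$, so a weak*-exposed point of $B_{\mathbb{X}^*}$ is the same as an exposed point. It then remains to observe that in finite dimensions every exposed point is automatically strongly exposed: if $x\in S_{\mathbb{X}}$ exposes $x^*$ and a sequence $\{x_n^*\}\subset S_{\mathbb{X}^*}$ satisfies $x(x_n^*)\to 1=x(x^*)$, then compactness of $S_{\mathbb{X}^*}$ lets us extract a convergent subsequence whose limit lies on $S_{\mathbb{X}^*}$ and is fixed at value $1$ by $x$; since $x$ exposes $x^*$, that limit must be $x^*$, and a routine subsequence argument upgrades this to $x_n^*\to x^*$. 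Hence exposed, strongly exposed, and weak*-strongly exposed points of $B_{\mathbb{X}^*}$ all coincide.

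Third, I would verify the standing hypothesis $B_{\mathbb{X}^*}=\overline{co(w^*\mymathhyphen st\mymathhyphen Exp(B_{\mathbb{X}^*}))}^{w^*}$ of the main theorem. By Minkowski's theorem $B_{\mathbb{X}^*}=co(Ext(B_{\mathbb{X}^*}))$, and since every extreme point of a finite-dimensional compact convex body is a limit of exposed points (Straszewicz), one obtains $B_{\mathbb{X}^*}=\overline{co(Exp(B_{\mathbb{X}^*}))}$; combined with the first two steps this is exactly the required identity. With all hypotheses in force and the two substitutions $w^*\mymathhyphen st\mymathhyphen Exp(B_{\mathbb{X}^*})=Exp(B_{\mathbb{X}^*})$ and $\overline{Im~\phi}^{w^*}=\overline{Im~\phi}$, the preceding theorem yields that $\mathbb{Y}$ is strongly anti-coproximinal if and only if $Exp(B_{\mathbb{X}^*})\subset\overline{Im~\phi}$ for every $\phi\in\Lambda_{\mathbb{Y}}$, as claimed. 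The only genuinely non-formal step is the second one, the identification of exposed with strongly exposed points together with the predual/bidual identification of exposing functionals; there compactness of the finite-dimensional unit sphere does all the work, while the remaining steps are topological bookkeeping appealing to standard convexity theory. So I expect the main (mild) obstacle to lie in making that coincidence fully rigorous.
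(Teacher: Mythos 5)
Your proposal is correct and matches the paper's route exactly: the paper states this corollary as an immediate specialization of the preceding characterization theorem, relying on the same finite-dimensional facts you verify (weak* topology coincides with the norm topology, exposed points of $B_{\mathbb{X}^*}$ coincide with weak*-strongly exposed points via compactness of the unit ball, and $B_{\mathbb{X}^*}=\overline{co(Exp(B_{\mathbb{X}^*}))}$ by Straszewicz), facts the authors themselves record in a remark earlier in the paper. Your write-up simply makes these identifications explicit, and your compactness argument upgrading exposed to strongly exposed is sound.
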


While we  provide a complete characterization of  the anti-coproximinal and the strongly anti-coproximinal subspaces in general  setting via the notion of selection maps, determing such subspaces from a computational  standpoint remains a formidable challenge. In particular, obtaining the full structure of the set of selection maps for a subspace of  the space $C(K, \mathbb{X})$ and $\mathbb{L}(\mathbb{X}, \mathbb{Y})$ is a difficult task.   Nevertheless, this does not diminish the significance of the notion of slection maps. Utilizing this concept, we can fully characterize the points from which the best coapproximation exists, leading to a complete characterization of coproximinal and co-Chebyshev subspaces. Furthermore, this notion also enables us to characterize when a subspace of a Banach space is isonetrically isomorphic to $\ell_{\infty}^n.$

We now describe the set $dom ~\mathcal{R}_\mathbb{Y}$ in terms of selection map.

\begin{theorem}
Let $\mathbb{Y}$ be a subspace of a Banach space $\mathbb{X}.$ Then
\[dom~\mathcal{R}_\mathbb{Y} = \mathbb{Y} + ( \cup_{\phi \in \Lambda_\mathbb{Y}}{^\perp Im~\phi }).\]
\end{theorem}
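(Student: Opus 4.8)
The plan is to prove the set equality $dom~\mathcal{R}_\mathbb{Y} = \mathbb{Y} + (\cup_{\phi \in \Lambda_\mathbb{Y}}{^\perp Im~\phi})$ by establishing the two inclusions separately, using throughout the characterization from \cite{FF} that $y_0 \in \mathcal{R}_\mathbb{Y}(x)$ if and only if $\mathbb{Y} \perp_B (x - y_0)$, together with James' Lemma \ref{James} which restates Birkhoff--James orthogonality in terms of supporting functionals.

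\textbf{The inclusion $\supseteq$.} Take any $x$ in the right-hand side, so $x = y_0 + w$ with $y_0 \in \mathbb{Y}$ and $w \in {^\perp Im~\phi}$ for some $\phi \in \Lambda_\mathbb{Y}$. I claim $y_0$ is a best coapproximation to $x$ out of $\mathbb{Y}$, which places $x$ in $dom~\mathcal{R}_\mathbb{Y}$. Indeed, for each $y \in \mathbb{Y}$ set $f_y = \phi(y) \in J(y)$; since $w \in {^\perp Im~\phi} = \cap_{y \in \mathbb{Y}}\ker f_y$, we have $f_y(w) = 0$, i.e. $f_y(x - y_0) = 0$. By Lemma \ref{James} this gives $y \perp_B (x - y_0)$ for every $y \in \mathbb{Y}$, hence $\mathbb{Y} \perp_B (x - y_0)$ and $y_0 \in \mathcal{R}_\mathbb{Y}(x)$.

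\textbf{The inclusion $\subseteq$.} Conversely suppose $x \in dom~\mathcal{R}_\mathbb{Y}$, so there exists $y_0 \in \mathbb{Y}$ with $\mathbb{Y} \perp_B (x - y_0)$. Writing $w = x - y_0$, for every $y \in \mathbb{Y}$ we have $y \perp_B w$, and by Lemma \ref{James} there is a functional $f_y \in J(y)$ with $f_y(w) = 0$. The key step is to assemble these functionals into a genuine selection map: define $\phi : \mathbb{Y} \to \mathbb{X}^*$ on a suitable scalar-normalized representative set by $\phi(y) = f_y$ and extend by $\phi(\mu y) = \overline{\mu}\, f_y$ for all $\mu \in \mathbb{C}$, exactly as in the proof of Theorem \ref{anti-coproximinal}. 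One must check that this assignment is consistent with the homogeneity requirement $f_{\mu y} = \overline{\mu} f_y$ built into the definition of $\Lambda_\mathbb{Y}$, and that $f_y(w) = 0$ is preserved under the scaling (it is, since $\overline{\mu} f_y(w) = 0$); this yields $\phi \in \Lambda_\mathbb{Y}$ with $w \in \cap_{y\in\mathbb{Y}}\ker f_y = {^\perp Im~\phi}$. Then $x = y_0 + w \in \mathbb{Y} + {^\perp Im~\phi}$ belongs to the right-hand side.

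\textbf{Main obstacle.} The only delicate point is the construction of $\phi$ in the $\subseteq$ direction: the naive choice $y \mapsto f_y$ need not automatically satisfy the conjugate-homogeneity constraint $f_{\mu y} = \overline{\mu} f_y$ demanded of selection maps, so one should first choose representatives across the one-dimensional scaling orbits in $\mathbb{Y}$ and then define $\phi$ on all of $\mathbb{Y}$ by the homogeneity rule, verifying that the resulting functional still lies in $J(\mu y)$ and still annihilates $w$. Since $J(\mu y) = \overline{\mu} J(y)$ and $w \in \ker f_y$ forces $w \in \ker(\overline{\mu} f_y)$, both verifications are routine, and no genuine difficulty arises beyond this bookkeeping. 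The remaining manipulations are immediate from Lemma \ref{James} and the definitions.
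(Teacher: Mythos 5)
Your proposal is correct and follows essentially the same route as the paper: both directions are handled by the equivalence $y_0 \in \mathcal{R}_\mathbb{Y}(x) \iff \mathbb{Y} \perp_B (x-y_0)$ together with Lemma \ref{James}, and the $\subseteq$ inclusion is obtained by assembling the functionals $f_y$ into a member of $\Lambda_\mathbb{Y}$ exactly as in the proof of Theorem \ref{anti-coproximinal}. In fact you make explicit the conjugate-homogeneity bookkeeping ($\phi(\mu y)=\overline{\mu}f_y$, preservation of $f_y(w)=0$ under scaling) that the paper's proof leaves implicit when it simply asserts $x-y_0 \in {^\perp Im~\phi}$ for some $\phi \in \Lambda_\mathbb{Y}$.
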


\begin{proof}

Let $x \in dom~\mathcal{R}_\mathbb{Y}.$ Then there exists $y_0\in \mathbb{Y}$ such that $y_0 \in \mathcal{R}_\mathbb{Y}.$ This implies that $ \mathbb{Y} \perp_B x-y_0.$ Therefore, $x-y_0 \in {^\perp Im~\phi},$ for some $\phi \in \Lambda_\mathbb{Y}.$ Thus we obtain that $x-y_0 \in \cup_{\phi \in \Lambda_\mathbb{Y}}{^\perp Im~\phi},$ i.e., $dom~\mathcal{R}_\mathbb{Y} \subset  \mathbb{Y} + ( \cup_{\phi \in \Lambda_\mathbb{Y}}{^\perp Im~\phi }).$ 

To show the converse inclusion suppose that $u= y+z,$ where $y\in \mathbb{Y}$ and $z \in \cup_{\phi \in \Lambda_\mathbb{Y}}{^\perp Im~\phi}.$ Then  there exists $\phi \in \Lambda_\mathbb{Y}$ such that $u-y \in {^\perp Im~\phi}.$ This implies that $ \mathbb{Y} \perp_B u-y \implies y \in \mathcal{R}_\mathbb{Y}(u) \implies u \in dom~\mathcal{R}_\mathbb{Y}.$ Therefore,  $\mathbb{Y} +  (\cup_{\phi \in \Lambda_\mathbb{Y}}{^\perp Im~\phi}) \subset dom~\mathcal{R}_\mathbb{Y}.$ This proves the theorem.
\end{proof}

As an immediate corollary we obtain a characterization of coproximinal and co-Chebyshev subspaces.

\begin{cor}\label{corpxoximinal}
Let $\mathbb{Y}$ be a subspace of a Banach space $\mathbb{X}.$ Then the following holds:
\begin{itemize}
	\item[(i)] $\mathbb{Y}$ is coproximinal if and only if  $ \mathbb{Y} + ( \cup_{\phi \in \Lambda_\mathbb{Y}}{^\perp Im~\phi }) = \mathbb{X}.$
	
	\item[(ii)] $\mathbb{Y}$ is co-Chebyshev if and only if $ \mathbb{Y}  \oplus ( \cup_{\phi \in \Lambda_\mathbb{Y}}{^\perp Im~\phi }) = \mathbb{X}.$
\end{itemize}
\end{cor}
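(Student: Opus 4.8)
The plan is to prove the two equivalences of Corollary~\ref{corpxoximinal} directly from the preceding theorem, which already identifies the domain of the best coapproximation map as $\mathrm{dom}~\mathcal{R}_\mathbb{Y} = \mathbb{Y} + (\cup_{\phi \in \Lambda_\mathbb{Y}}{^\perp Im~\phi})$. Since coproximinality of $\mathbb{Y}$ means precisely that a best coapproximation exists for every point, i.e. $\mathrm{dom}~\mathcal{R}_\mathbb{Y} = \mathbb{X}$, part (i) is obtained by simply substituting the formula for the domain into this definition. Thus the entire content of (i) is a restatement and no further work is required.

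For part (ii), recall that $\mathbb{Y}$ is co-Chebyshev if it is coproximinal and the set $\mathcal{R}_\mathbb{Y}(x)$ is a singleton for every $x \in \mathbb{X}$. First I would use (i) to handle the coproximinality requirement, which is already captured by $\mathbb{Y} + (\cup_{\phi \in \Lambda_\mathbb{Y}}{^\perp Im~\phi}) = \mathbb{X}$. The remaining task is to translate the uniqueness of the best coapproximation into the directness of this sum. The key observation is that $y_0 \in \mathcal{R}_\mathbb{Y}(x)$ if and only if $x - y_0 \in {^\perp Im~\phi}$ for some $\phi \in \Lambda_\mathbb{Y}$, by the characterization $\mathbb{Y} \perp_B (x - y_0)$ combined with Lemma~\ref{James}. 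Consequently, two distinct best coapproximations $y_0, y_1$ to $x$ would give $x - y_0$ and $x - y_1$ both lying in $\cup_{\phi \in \Lambda_\mathbb{Y}}{^\perp Im~\phi}$, whence their difference $y_1 - y_0$ is a nonzero element of $\mathbb{Y} \cap \big(\text{span or difference of elements of } \cup_{\phi}{^\perp Im~\phi}\big)$.

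The main obstacle will be making precise the meaning of the direct sum symbol $\oplus$ when the second summand $\cup_{\phi \in \Lambda_\mathbb{Y}}{^\perp Im~\phi}$ is a union of subspaces rather than a single subspace, and hence need not itself be a subspace. I would interpret the directness condition as asserting that the representation $x = y + z$ with $y \in \mathbb{Y}$ and $z \in \cup_{\phi}{^\perp Im~\phi}$ is unique, and then argue the equivalence as follows. If the representation is unique, then for any $x$ the best coapproximation $y_0$ (which satisfies $x - y_0 \in \cup_{\phi}{^\perp Im~\phi}$) is uniquely determined, giving co-Chebyshev. Conversely, if $\mathbb{Y}$ is co-Chebyshev, suppose $x = y + z = y' + z'$ are two such representations; then both $y$ and $y'$ lie in $\mathcal{R}_\mathbb{Y}(x)$ since $x - y = z$ and $x - y' = z'$ each lie in some ${^\perp Im~\phi}$, and by Proposition~\ref{intersection}(i) each such $z$ forces $\mathbb{Y} \perp_B z$; uniqueness of the best coapproximation then yields $y = y'$ and hence $z = z'$.

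I would therefore write the proof compactly, invoking part (i) for the coproximinality half and then spelling out the one-to-one correspondence between elements of $\mathcal{R}_\mathbb{Y}(x)$ and the decompositions $x = y + z$. The only delicate point is confirming that the symbol $\oplus$ in the statement is understood as uniqueness of decomposition; I would state this interpretation explicitly at the outset to avoid ambiguity, and then the equivalence reduces to the already-established bijection between best coapproximations and elements $z \in \cup_{\phi}{^\perp Im~\phi}$ with $x - z \in \mathbb{Y}$.
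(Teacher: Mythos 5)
Your proposal is correct and follows essentially the same route as the paper, which gives no separate proof and presents the corollary as an immediate consequence of the preceding theorem identifying $dom~\mathcal{R}_\mathbb{Y} = \mathbb{Y} + (\cup_{\phi \in \Lambda_\mathbb{Y}}{^\perp Im~\phi})$: part (i) is direct substitution, and part (ii) is exactly your observation that, with $\oplus$ read as uniqueness of the decomposition $x = y + z$, $y \in \mathbb{Y}$, $z \in \cup_{\phi \in \Lambda_\mathbb{Y}}{^\perp Im~\phi}$, the best coapproximations to $x$ out of $\mathbb{Y}$ correspond bijectively to such decompositions. One minor correction: the implication $z \in {^\perp Im~\phi} \implies \mathbb{Y} \perp_B z$ follows from Lemma \ref{James} (as used in the proof of the theorem itself), not from Proposition \ref{intersection}(i), which only asserts $\mathbb{Y} \cap {^\perp Im~\phi} = \{0\}$.
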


\begin{cor}\label{prop:coproximinal}
Let $\mathbb{Y}$ be an $m$-dimensional subspace of a Banach space $\mathbb{X}.$ If there exists $\phi \in \Lambda_\mathbb{Y}$ such that $dim~span(Im~\phi)=m.$ Then $\mathbb{Y}$ is coproximinal in $\mathbb{X}.$ 
\end{cor}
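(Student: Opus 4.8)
The plan is to produce, from the given selection map $\phi$, a subspace complementary to $\mathbb{Y}$ that is contained in ${}^\perp Im~\phi$, and then to conclude via the characterization of coproximinality in Corollary \ref{corpxoximinal}(i). Throughout, write $N = {^\perp Im~\phi}$.

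First I would pin down the codimension of $N$. Since $dim~span(Im~\phi) = m$, choose a basis $f_1, f_2, \ldots, f_m$ of $span(Im~\phi)$; then $N = {^\perp Im~\phi} = \cap_{i=1}^m \ker f_i$, because a functional annihilates every element of $Im~\phi$ exactly when it annihilates the spanning set $\{f_1, \ldots, f_m\}$. As $f_1, \ldots, f_m$ are linearly independent, the evaluation map $x \mapsto (f_1(x), \ldots, f_m(x))$ carries $\mathbb{X}$ onto $\mathbb{K}^m$ with kernel $N$, so $\mathbb{X}/N \cong \mathbb{K}^m$ and $codim~N = m$. This is precisely the finite-codimension bound already exploited in the proof of Proposition \ref{intersection}(ii), now taken with $p = m$.

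Next I would combine this with Proposition \ref{intersection}(i), which gives $\mathbb{Y} \cap N = \{0\}$. Let $q : \mathbb{X} \to \mathbb{X}/N$ be the quotient map. The restriction $q|_{\mathbb{Y}}$ has kernel $\mathbb{Y} \cap N = \{0\}$, hence is injective; since $dim~\mathbb{Y} = m = dim(\mathbb{X}/N)$, it is in fact a linear isomorphism of $\mathbb{Y}$ onto $\mathbb{X}/N$, and in particular surjective. Surjectivity means that for every $x \in \mathbb{X}$ there is $y \in \mathbb{Y}$ with $q(x-y) = 0$, i.e. $x - y \in N$, so that $x = y + (x-y) \in \mathbb{Y} + N$. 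Thus $\mathbb{X} = \mathbb{Y} + N$ (indeed $\mathbb{X} = \mathbb{Y} \oplus N$, as the sum is direct).

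Finally, since $N = {^\perp Im~\phi} \subseteq \cup_{\psi \in \Lambda_\mathbb{Y}}{^\perp Im~\psi}$, we obtain $\mathbb{Y} + (\cup_{\psi \in \Lambda_\mathbb{Y}}{^\perp Im~\psi}) = \mathbb{X}$, whence Corollary \ref{corpxoximinal}(i) yields that $\mathbb{Y}$ is coproximinal in $\mathbb{X}$. The only delicate point is the dimension step: in an infinite-dimensional $\mathbb{X}$ one cannot argue by comparing dimensions of subspaces directly, so the crux is to pass to the finite-dimensional quotient $\mathbb{X}/N$ and use that an injective linear map between spaces of equal finite dimension is automatically surjective. (In fact injectivity of $q|_{\mathbb{Y}}$ alone forces $codim~N \geq m$, so even the weaker bound $codim~N \leq m$ from Proposition \ref{intersection}(ii) would suffice to complete the argument.)
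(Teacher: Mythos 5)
Your proposal is correct and takes essentially the same route as the paper's proof: establish $codim({^\perp Im~\phi})=m$, use Proposition \ref{intersection} to get $\mathbb{Y}\cap{^\perp Im~\phi}=\{0\}$, conclude $\mathbb{Y}+{^\perp Im~\phi}=\mathbb{X}$, and invoke Corollary \ref{corpxoximinal}(i). The only difference is that you spell out the steps the paper labels ``clear'' and ``immediate'' (surjectivity of the joint evaluation map onto $\mathbb{K}^m$ and the injective-hence-surjective quotient argument $q|_{\mathbb{Y}}:\mathbb{Y}\to\mathbb{X}/N$), which is a faithful and careful filling-in rather than a different argument.
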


\begin{proof}
As $dim~span(Im~\phi)=m,$ it is clear that $ codim({^\perp Im~\phi})=m.$ From Proposition \ref{intersection}, $\mathbb{Y} \cap {^\perp Im~\phi}= \emptyset.$ Therefore it is immediate that $\mathbb{Y} + {^\perp Im~\phi} =\mathbb{X}$ and therefore, using Corollary \ref{corpxoximinal}, $\mathbb{Y}$ is coproximinal in $\mathbb{X}.$
\end{proof}

\begin{lemma}\label{lemma:facet}
Let $\mathbb{X}$ be a finite-dimensional real polyhedral  Banach space and let $\mathbb{Y}$ be a proper subspace of $\mathbb{X}.$ Suppose that $F$ is a facet of $B_{\mathbb{X}}$ such that $int (F) \cap \mathbb{Y} \neq \emptyset.$ Then $F \cap \mathbb{Y}$ is a facet of $B_{\mathbb{Y}}.$
\end{lemma}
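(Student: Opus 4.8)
Let me state what must be proven. We have a finite-dimensional real polyhedral Banach space $\mathbb{X}$, a proper subspace $\mathbb{Y}$, and a facet $F$ of $B_{\mathbb{X}}$ — that is, $F$ is a maximal face, which in the polyhedral setting is a full-dimensional face of the boundary sphere, of dimension $\dim \mathbb{X}-1$. We assume $int(F)\cap \mathbb{Y}\neq \emptyset$, and we must show that $F\cap \mathbb{Y}$ is a facet of $B_{\mathbb{Y}}$, meaning it is a maximal face of $B_{\mathbb{Y}}$ of dimension $\dim \mathbb{Y}-1$.

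Let me plan how I would prove this.

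**Approach via the supporting functional.**
The plan is to exploit that a maximal face $F$ of a polyhedral ball is cut out by a single extreme functional. By \cite[Lemma 2.1]{SPBB} there is a unique $x^*\in Ext(B_{\mathbb{X}^*})$ with $F=\{x\in S_{\mathbb{X}}: x^*(x)=1\}$ and $x^*(z)<1$ for all $z\in B_{\mathbb{X}}\setminus F$; equivalently $x\in int(F)$ iff $J(x)=\{x^*\}$. First I would pick $w\in int(F)\cap \mathbb{Y}$ and restrict $x^*$ to $\mathbb{Y}$, writing $y^*=x^*|_{\mathbb{Y}}\in \mathbb{Y}^*$. Since $w\in S_{\mathbb{Y}}$ and $y^*(w)=1=\|y^*\|$ (the norm of $y^*$ cannot exceed that of $x^*$, which is $1$), $y^*$ is a norm-one functional on $\mathbb{Y}$. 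Then I would identify
\[
F\cap \mathbb{Y}=\{y\in S_{\mathbb{Y}}: y^*(y)=1\},
\]
which is the face of $B_{\mathbb{Y}}$ exposed by $y^*$. The inclusion $\subseteq$ is immediate from the definition of $F$; for $\supseteq$, if $y\in S_{\mathbb{Y}}$ with $x^*(y)=1$ then $y\in F$ since $F$ is exactly the $x^*=1$ set on $S_{\mathbb{X}}$.

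**Maximality and dimension.**
The two remaining points are that $F\cap \mathbb{Y}$ is a \emph{maximal} face of $B_{\mathbb{Y}}$ and that it has the right dimension. For maximality, I would argue that $\mathbb{Y}$ is itself polyhedral (a subspace of a finite-dimensional polyhedral space is polyhedral, since $B_{\mathbb{Y}}=B_{\mathbb{X}}\cap \mathbb{Y}$ is a polytope), so its maximal faces are precisely those exposed by extreme points of $B_{\mathbb{Y}^*}$. The key is to show $y^*$ is extreme in $B_{\mathbb{Y}^*}$, or at least that $F\cap \mathbb{Y}$ is contained in no strictly larger proper face. Here I would use $w\in int(F)$ crucially: since $w$ is relatively interior to $F$, the face of $B_{\mathbb{Y}}$ generated by $w$ is $F\cap \mathbb{Y}$ itself, and this cannot be a proper subface of another face because $w$ already lies in the relative interior. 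For the dimension count $\dim(F\cap \mathbb{Y})=\dim \mathbb{Y}-1$, I would note that $F\cap \mathbb{Y}$ spans the affine hyperplane $\{y\in \mathbb{Y}: y^*(y)=1\}$ in $\mathbb{Y}$; because $w\in int(F)$, a full-dimensional neighborhood of $w$ within $F$ lies in $\mathbb{Y}\cap F$, forcing the affine hull of $F\cap \mathbb{Y}$ to be all of this hyperplane, which has dimension $\dim \mathbb{Y}-1$.

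**Expected main obstacle.**
The delicate step is establishing that $int(F)\cap \mathbb{Y}$ actually produces an interior point of the \emph{relative} face $F\cap \mathbb{Y}$, and hence that the affine hull of $F\cap \mathbb{Y}$ has the full codimension-one dimension rather than something smaller. A priori, $\mathbb{Y}$ could be tangent to $F$ in a low-dimensional way; the hypothesis $int(F)\cap \mathbb{Y}\neq\emptyset$ is precisely what rules this out, and I would make this rigorous by taking $w\in int(F)\cap \mathbb{Y}$, choosing a relative ball $B(w,r)\cap \mathrm{aff}(F)\subseteq F$, and intersecting with $\mathbb{Y}$ to produce a relatively open subset of $F\cap\mathbb{Y}$ around $w$. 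Verifying that this intersection genuinely has dimension $\dim\mathbb{Y}-1$ (equivalently, that $\mathrm{aff}(F)\cap \mathbb{Y}$ is a hyperplane of $\mathbb{Y}$, not lower-dimensional) is the technical heart; once that is in hand, maximality follows since a codimension-one face of a polytope is automatically a facet, hence maximal.
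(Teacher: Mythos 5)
The paper states Lemma~\ref{lemma:facet} without any proof (it is used immediately afterwards in Theorem~\ref{general}(ii)), so there is no in-paper argument to compare yours against; your proposal supplies the missing proof, and it is correct. Your route is the natural one and all the key steps check out: by \cite[Lemma 2.1]{SPBB} the facet $F$ is exposed by a unique $x^*\in Ext(B_{\mathbb{X}^*})$ with $\mathrm{aff}(F)=\{x: x^*(x)=1\}$; restricting gives $y^*=x^*|_{\mathbb{Y}}$ of norm one (normed by $w\in int(F)\cap\mathbb{Y}$), and $F\cap\mathbb{Y}=\{y\in S_{\mathbb{Y}}: y^*(y)=1\}$ is the exposed, hence midpoint-closed, face of $B_{\mathbb{Y}}$. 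Your dimension argument is the technical heart and is sound: a relative ball $B(w,r)\cap\mathrm{aff}(F)\subseteq F$ intersected with $\mathbb{Y}$ yields a relatively open neighbourhood of $w$ inside the affine hyperplane $H=\{y\in\mathbb{Y}: y^*(y)=1\}$ of $\mathbb{Y}$ (note $H$ genuinely has dimension $\dim\mathbb{Y}-1$ since $y^*\neq 0$ on $\mathbb{Y}$), so $\mathrm{aff}(F\cap\mathbb{Y})=H$. One step you leave slightly implicit is why codimension one forces maximality in the paper's sense of face; it does, and the clean justification is: if a face $F'$ of $B_{\mathbb{Y}}$ strictly contained $F\cap\mathbb{Y}$, any $z\in F'$ with $y^*(z)<1$ lies off $H$, so $co\bigl((F\cap\mathbb{Y})\cup\{z\}\bigr)\subseteq F'\subseteq S_{\mathbb{Y}}$ would be a full-dimensional convex subset of the sphere, which is impossible, while any $z\in F'\cap H$ already satisfies $y^*(z)=1$ and hence lies in $F\cap\mathbb{Y}$. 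With that observation made explicit, your proof is complete and fills the gap the paper left open.
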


\begin{theorem}\label{general}
Let $\mathbb{Y}$ be an $n$-dimensional subspace of a real Banach space  $\mathbb{X}.$  Suppose that  $\phi \in \Lambda_\mathbb{Y}$ is a minimal selection map of $\mathbb{Y}$  such that $|Im~\phi|=2r.$ 
Then the following holds:
\begin{itemize}
	\item[(i)] $\mathbb{Y}$ is embedded in $\ell_{\infty}^r.$
	\item[(ii)] the number of facets of $B_{\mathbb{Y}}$ is $2r.$
	\item[(iii)] $\mathbb{Y}$ can not be embedded into $\ell_{\infty}^{m},$ for $m < r$.
	\item[(iv)] $\mathbb{Y}$ is isometrically isomorphic to $\ell_{\infty}^n$ if and only if $r=n.$
\end{itemize}  
\end{theorem}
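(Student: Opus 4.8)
The plan is to realize $\mathbb{Y}$ concretely as a subspace of a cube and then read off all four statements from the facial structure of that cube. Since $\phi$ is a real selection map, the relation $\phi(-y)=-\phi(y)$ forces $Im~\phi$ to be symmetric, so I may write $Im~\phi=\{\pm g_1,\dots,\pm g_r\}$ with each $g_i\in S_{\mathbb{X}^*}$. For (i), I would define $\Phi:\mathbb{Y}\to\ell_\infty^r$ by $\Phi(y)=(g_1(y),\dots,g_r(y))$ and show $\|\Phi(y)\|_\infty=\|y\|$ for all $y\in\mathbb{Y}$. The bound $\|\Phi(y)\|_\infty\le\|y\|$ is immediate from $\|g_i\|=1$; for the reverse, note that $\phi(y)\in J(y)\cap Im~\phi$, so $\phi(y)=\pm g_i$ for some $i$ and hence $|g_i(y)|=|\phi(y)(y)|=\|y\|$. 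Thus $\Phi$ is a linear isometry, i.e.\ an embedding into $\ell_\infty^r$, and $B_{\mathbb{Y}}=\{y\in\mathbb{Y}:\max_i|g_i(y)|\le 1\}$ is a polytope.

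For (ii), I would identify $\mathbb{Y}$ with $\Phi(\mathbb{Y})$, so that $B_{\mathbb{Y}}=B_{\ell_\infty^r}\cap\Phi(\mathbb{Y})$. The cube $B_{\ell_\infty^r}$ has exactly $2r$ facets, one in each hyperplane where a coordinate functional equals $\pm 1$. The decisive input is the minimality of $\phi$: by Proposition \ref{prop:minimal}, for each $f\in Im~\phi$ there is $y\in S_{\mathbb{Y}}$ with $Im~\phi\cap J(y)=\{f\}$, which translates into $\Phi(y)$ lying in the relative interior of the corresponding facet of $B_{\ell_\infty^r}$ (one coordinate is forced to $\pm 1$ while all others are strictly smaller in modulus). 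Hence the interior of every facet of the cube meets $\Phi(\mathbb{Y})$, and Lemma \ref{lemma:facet} shows that each such facet cuts $\Phi(\mathbb{Y})$ in a facet of $B_{\mathbb{Y}}$; these $2r$ facets are pairwise distinct since they lie in distinct hyperplanes. Conversely, every facet of $B_{\mathbb{Y}}$ is contained in one of the $2r$ bounding hyperplanes of the cube, so there are no others, and $B_{\mathbb{Y}}$ has exactly $2r$ facets.

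For (iii), suppose $\mathbb{Y}$ embeds isometrically in $\ell_\infty^m$. Then $B_{\mathbb{Y}}$ is the intersection of the $m$-cube with an $n$-dimensional subspace, hence is cut out by the $2m$ inequalities $\pm e_j^*(z)\le 1$; a polytope described by $2m$ inequalities has at most $2m$ facets. Comparing with (ii) gives $2r\le 2m$, so no embedding into $\ell_\infty^m$ with $m<r$ can exist. For (iv), if $r=n$ then $\Phi$ from (i) is an injective linear isometry between the $n$-dimensional spaces $\mathbb{Y}$ and $\ell_\infty^n$, hence surjective, so $\mathbb{Y}$ is isometrically isomorphic to $\ell_\infty^n$; conversely, if $\mathbb{Y}\cong\ell_\infty^n$, a linear isometry carries $B_{\mathbb{Y}}$ onto the $n$-cube $B_{\ell_\infty^n}$, which has $2n$ facets, and comparison with the count $2r$ from (ii) forces $r=n$.

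The main obstacle is (ii): one must verify not merely that $B_{\mathbb{Y}}$ is cut out by $2r$ inequalities, but that none is redundant and that distinct bounding functionals yield distinct facets. This is exactly where the minimality of $\phi$ is indispensable, since Proposition \ref{prop:minimal} supplies, for each bounding functional, a point in the relative interior of the associated facet, and where Lemma \ref{lemma:facet} is needed to promote a facet of the ambient cube to a facet of $B_{\mathbb{Y}}$. Parts (i), (iii), (iv) are then routine consequences of the embedding and the facet count.
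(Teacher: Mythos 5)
Your proposal is correct and follows essentially the same route as the paper: the same coordinate embedding $y\mapsto(g_1(y),\dots,g_r(y))$, the same use of Proposition \ref{prop:minimal} to place a point of $\Phi(\mathbb{Y})$ in the relative interior of each facet of the cube, Lemma \ref{lemma:facet} to promote these to facets of $B_{\mathbb{Y}}$, and the same facet-counting arguments for (iii) and (iv). If anything, you are slightly more careful than the paper in (ii), where you explicitly verify both that every facet of $B_{\mathbb{Y}}$ lies in a bounding hyperplane of the cube (so there are at most $2r$) and that the $2r$ facets are pairwise distinct, points the paper leaves implicit.
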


\begin{proof}
(i) Let $Im~\phi= \{ \pm f_1, \pm  f_2, \ldots, \pm f_r\}.$ We define a map $\zeta$ from $\mathbb{Y}$ to $\ell_{\infty}^r$ as follows
\begin{eqnarray*}
	\zeta &:& \mathbb{Y} \longrightarrow \ell_{\infty}^r \\
	\zeta(y) &=& (f_1(y), f_2(y), \ldots, f_r(y)), ~ \forall y \in \mathbb{Y}.
\end{eqnarray*}
Clearly, $\zeta$ is linear. Since $\phi \in \Lambda_{\mathbb{Y}}$, then for any $y \in \mathbb{Y},$ there exists $f \in Im~ \phi$ such that $f \in J(y).$ Therefore,
\begin{eqnarray*}
	\|y\| &=&\max \{ |f_i(y)|: 1 \leq i \leq r\}\\ & =& \|(f_1(y), f_2(y), \ldots, f_r(y))\| \\ &=& \|\zeta(y)\|.
\end{eqnarray*}
So, $\zeta $ is an isometry and $\mathbb{Y}$ can be embedded into $\ell_{\infty}^r.$\\

(ii) Since $\mathbb{Y}$ can be embedded into 	$\ell_{\infty}^r,$ $\mathbb{Y}$ is polyhedral. Let 
\[F_i= \{ \widetilde{x}=(x_1, x_2, \ldots, x_r) \in \ell_{\infty}^r: x_i=1, |x_j| \leq 1~\forall j \neq i \}.\]
Clearly, 
$\pm F_1, \pm F_2, \ldots, \pm F_r$ are the facets of $B_{\ell_{\infty}^r}.$  
Following Proposition \ref{prop:minimal},
for each $i,1 \leq i \leq r,$ there exists $y_i \in \mathbb{Y}$ such that $ Im~\phi \cap J(y_i)=\{f_i\}.$ We already observe that $\zeta,$ (defined in (i)) is a linear isometry from $\mathbb{Y}$ to $\ell_{\infty}^r,$ so $\mathbb{Y}$ is isometrically isomorphic to $\zeta(\mathbb{Y}),$ which is a subspace of $\ell_{\infty}^r.$
As $ Im~\phi \cap J(y_i)=\{f_i\},$ we get $f_i(y_i) = \|y_i\|$ and $|f_j(y_i)| < \|y_i\|, $ for any $j \in \{1, 2, , \ldots, r\} \setminus \{i\}.$ This implies  
$\zeta(\frac{y_i}{\|y_i\|}) \in int(F_i).$  Therefore, from Lemma \ref{lemma:facet}, $F_i \cap \zeta(\mathbb{Y})$ is a facet of $B_{\zeta(\mathbb{Y})},$ for every $i, 1 \leq i \leq r.$ So, the number of facets of $B_{\zeta(\mathbb{Y})}$ is $2r$ and consequently the number of facets of $B_{\mathbb{Y}}$ is $2r.$\\

(iii) If $\mathbb{Y}$ can be embedded into $\ell_{\infty}^m,$ for some $m \in \mathbb{N},$ then the number of facets of $B_{\mathbb{Y}}$ is less than or equal to $2m.$ Then from (ii), the third part is immediate.\\

(iv) The necessary part follows from the fact that from  (ii) we get  $r \geq n$ and  applying Proposition \ref{intersection}(ii) we get $r \leq n.$ For the sufficient part, when $r=n$, the isometry follows from (i) and isomorphism follows from the fact that  $dim \mathbb{Y}=n$. 
\end{proof}

\begin{cor}\label{cor:2n}
Let $\mathbb{Y}$ be an $n$-dimensional subspace of a real Banach space  $\mathbb{X}.$ Suppose that  $\psi$ is a minimal selection map of $\mathbb{Y}$ such that $|Im~\psi|=2r.$ Let us consider the followings conditions:
\begin{itemize}
	\item[(i)] $r=n.$
	\item[(ii)]  $\mathbb{Y}$ is isometrically isomorphic to $\ell_{\infty}^n$.
	\item[(iii)] $\mathbb{Y}$ is coproximinal.
\end{itemize}
Then (i) $\iff $ (ii) $\implies$ (iii).
\end{cor}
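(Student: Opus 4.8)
The plan is to obtain all three implications as near-immediate consequences of the two results established just above, namely Theorem \ref{general} and Corollary \ref{prop:coproximinal}, so the work is entirely in assembling them correctly rather than in any new estimate.

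First I would prove (i) $\iff$ (ii). This is precisely the content of Theorem \ref{general}(iv): under the standing hypothesis that $\psi$ is a minimal selection map of the $n$-dimensional subspace $\mathbb{Y}$ with $|Im~\psi|=2r$, that theorem asserts $\mathbb{Y}$ is isometrically isomorphic to $\ell_\infty^n$ if and only if $r=n$. Since the hypotheses of the corollary match those of Theorem \ref{general} verbatim (writing $\psi$ for the minimal selection map in place of $\phi$), both directions follow at once by quoting part (iv).

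Next I would prove (i) $\implies$ (iii). Assuming $r=n$, I would invoke Theorem \ref{general}(i) to embed $\mathbb{Y}$ isometrically into $\ell_\infty^r=\ell_\infty^n$ via the map $\zeta$; equivalently, by part (iv) of that theorem, $\mathbb{Y}$ is isometrically isomorphic to $\ell_\infty^n$. The key structural point is that when $r=n$, the functionals $f_1,\dots,f_r$ appearing in $Im~\phi=\{\pm f_1,\dots,\pm f_r\}$ must be linearly independent: if they spanned a space of dimension strictly less than $n$, then by Proposition \ref{intersection}(ii) applied to the $n$-dimensional subspace $\mathbb{Y}$ we would have $\dim~span(Im~\phi)\geq n$, forcing $\dim~span(Im~\phi)=n=r$. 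Hence $\dim~span(Im~\phi)=m$ with $m=n=\dim\mathbb{Y}$, and Corollary \ref{prop:coproximinal} immediately yields that $\mathbb{Y}$ is coproximinal in $\mathbb{X}$.

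The only point requiring a moment of care—and the step I expect to be the main (though minor) obstacle—is verifying that the equality $r=n$ genuinely forces $\dim~span(Im~\phi)=n$, so that the hypothesis of Corollary \ref{prop:coproximinal} is met. This is where the interplay of the two bounds from Proposition \ref{intersection}(ii) (giving $\dim~span(Im~\phi)\geq n$) and the obvious bound $\dim~span(Im~\phi)\leq r$ (since $Im~\phi$ contains only $r$ functionals up to sign) pins the dimension down exactly; indeed these are precisely the two inequalities $r\geq n$ and $r\leq n$ used in the proof of Theorem \ref{general}(iv). Beyond this, the corollary is a direct bookkeeping assembly, and no implication from (iii) back to (i) or (ii) is claimed, so there is nothing further to establish.
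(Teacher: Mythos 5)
Your proposal is correct and follows essentially the same route as the paper, whose entire proof is the two citations ``(i) $\iff$ (ii) follows from Theorem \ref{general} and (ii) $\implies$ (iii) follows from Proposition \ref{prop:coproximinal}.'' The extra detail you supply---that $r=n$ pins down $\dim\,span(Im~\psi)=n$ via Proposition \ref{intersection}(ii) together with the bound $\dim\,span(Im~\psi)\leq r$, so that Corollary \ref{prop:coproximinal} applies---is exactly the routine bookkeeping the paper leaves implicit.
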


\begin{proof}
(i) $\iff$ (ii) follows from theorem \ref{general} and (ii) $\implies$ (iii) follows from Proposition \ref{prop:coproximinal}.
\end{proof}

We end this article by characterizing when a subspace can be polyhedral in a Banach space using the selection maps.

\begin{theorem}\label{polyhedral}
Let $\mathbb{Y}$ be an $n$-dimensional subspace of a real Banach space  $\mathbb{X}.$ Then  $\mathbb{Y}$ is polyhedral if and only if there exists a selection map $\phi\in \Lambda_\mathbb{Y}$  such that $|Im~\phi|$ is finite.
\end{theorem}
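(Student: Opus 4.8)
The plan is to prove the two implications separately, exploiting that polyhedrality of $\mathbb{Y}$ is an intrinsic property of its norm while selection maps live in the ambient dual $\mathbb{X}^*$, and using Hahn--Banach to pass between $\mathbb{Y}^*$ and $\mathbb{X}^*$. Throughout I will use that any $\phi \in \Lambda_\mathbb{Y}$ has $Im~\phi \subset S_{\mathbb{X}^*}$ and is antipodally symmetric, since $\phi(-y) = -\phi(y)$; hence a finite image automatically has the form $\{\pm f_1, \ldots, \pm f_r\}$.

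For the sufficiency direction, I would assume $Im~\phi = \{\pm f_1, \ldots, \pm f_r\}$ and consider the linear map $\zeta : \mathbb{Y} \to \ell_\infty^r$ given by $\zeta(y) = (f_1(y), f_2(y), \ldots, f_r(y))$. This is precisely the map constructed in Theorem \ref{general}(i), and its proof uses \emph{only} the finite symmetric structure of $Im~\phi$, not the minimality of $\phi$: for each $y$ there is some $f \in Im~\phi \cap J(y)$, so $\|y\| = \max_i |f_i(y)| = \|\zeta(y)\|$, making $\zeta$ an isometric embedding. Thus $\mathbb{Y}$ is isometric to a subspace of $\ell_\infty^r$, whose unit ball is the cube $[-1,1]^r$; since $B_{\zeta(\mathbb{Y})} = B_{\ell_\infty^r} \cap \zeta(\mathbb{Y})$ is the intersection of a polytope with a subspace, it is again a polytope, and therefore $\mathbb{Y}$ is polyhedral.

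For the necessity direction, I would start from the fact that a finite-dimensional polyhedral $\mathbb{Y}$ has polyhedral dual, so $B_{\mathbb{Y}^*}$ is a polytope with a finite, symmetric vertex set $Ext(B_{\mathbb{Y}^*}) = \{\pm g_1, \ldots, \pm g_r\}$. For each nonzero $y$, the set $G(y) = \{g \in S_{\mathbb{Y}^*} : g(y) = \|y\|\}$ is a nonempty exposed face of this polytope, hence contains at least one vertex $g_{i(y)}$. I would then fix, once and for all, a single norm-preserving Hahn--Banach extension $\widetilde{g}_i \in S_{\mathbb{X}^*}$ of each $g_i$, chosen antipodally so that $\widetilde{g}_{-i} = -\widetilde{g}_i$, and define $\phi(y) = \widetilde{g}_{i(y)}$, with the selection $y \mapsto i(y)$ arranged consistently on antipodal rays so that $\phi(-y) = -\phi(y)$ and the defining homogeneity of a selection map holds. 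Since $\widetilde{g}_{i(y)}(y) = g_{i(y)}(y) = \|y\|$ and $\|\widetilde{g}_{i(y)}\| = 1$, we get $\phi(y) \in J(y)$, so $\phi \in \Lambda_\mathbb{Y}$ with $Im~\phi \subseteq \{\pm \widetilde{g}_1, \ldots, \pm \widetilde{g}_r\}$ finite.

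The main obstacle is controlling finiteness of the image \emph{inside the ambient dual} $\mathbb{X}^*$: a supporting functional of $y$ in $\mathbb{Y}^*$ has many Hahn--Banach extensions to $\mathbb{X}^*$, so a naive pointwise extension could produce an infinite image even though $\mathbb{Y}^*$ offers only finitely many candidate functionals. The resolution, which is the crux of the argument, is to decouple the two choices: first reduce to the finite vertex set of $B_{\mathbb{Y}^*}$ (using that every exposed face of a polytope contains a vertex), and only afterwards fix one extension per vertex. Verifying that the resulting $\phi$ is a legitimate element of $\Lambda_\mathbb{Y}$ is then a routine check once the selection has been set up ray-wise.
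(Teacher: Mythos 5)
Your proof is correct and follows essentially the same route as the paper: sufficiency via the isometric embedding $\zeta:\mathbb{Y}\to\ell_{\infty}^r$ of Theorem \ref{general}(i) (and you rightly observe that this embedding uses only the finiteness of $Im~\phi$, not minimality, a point the paper's appeal to Theorem \ref{general} glosses over), and necessity by choosing for each $y$ a supporting extreme point of $B_{\mathbb{Y}^*}$ and fixing one norm-preserving extension per extreme point, arranged homogeneously so the result lies in $\Lambda_\mathbb{Y}$. The only cosmetic difference is that the paper extends the $g_i$ to \emph{extreme points} of $B_{\mathbb{X}^*}$ (citing \cite{Rao3}), whereas your plain Hahn--Banach extensions already suffice since only $\phi(y)\in J(y)$ is needed.
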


\begin{proof}
The sufficient part follows directly from Theorem \ref{general}. To prove the necessary part we assume that $\mathbb{Y}$ is polyhedral and  the number of facets of $B_{\mathbb{Y}}$ is $2r.$ From \cite[Lemma 2.1]{SPBB}, there exists an one-one  corresponding between a facet of $B_{\mathbb{Y}}$ and an extreme point of $B_{\mathbb{Y}^*}.$ Let $Ext(B_{\mathbb{Y}^*})= \{ \pm g_1, \pm  g_2, \ldots, \pm g_r\}.$ It is easy to observe that for any $y \in \mathbb{Y},$ there exists $g \in Ext(B_{\mathbb{Y}^*})$ such that $g(y)=\|y\|.$ Since extreme points in the dual unit ball of a subspace can be extended to the extreme points of dual unit ball of the whole space (see \cite{Rao3}), suppose that $f_i \in \mathbb{X}^*$ is the extension of $g_i,$ for each $i, 1 \leq i \leq r.$ Consider $\psi: \mathbb{Y} \to \mathbb{X}^*$ such that $\psi(y)= f_i, $ whenever $g_i(y) = \|y\|.$ Clearly, $f_i \in J(y).$ Therefore $\psi$ is a selection map of $\mathbb{Y}.$ This completes the theorem.  
\end{proof}

\section*{Declarations}

\begin{itemize}
	\item Funding
	
	NIL
	\item Conflict of interest
	
	The authors have no relevant financial or non-financial interests to disclose.
	
	\item Data availability 
	
	The manuscript has no associated data.
	
	\item Author contribution
	
	All authors contributed to the study. All authors read and approved the final version of the manuscript.
	
\end{itemize}

\end{document}